\newtheorem{theorem}{Theorem}[section]
\newtheorem{claim}{Claim}
\newtheorem*{claim*}{Claim}
\newtheorem{corollary}[theorem]{Corollary}
\newtheorem{lemma}[theorem]{Lemma}
\newtheorem{proposition}[theorem]{Proposition}
\newtheorem{remark}[theorem]{Remark}
\newcommand{\be}[1]{\begin{equation}\label{#1}}
\newcommand{\ee}{\end{equation}}
\numberwithin{equation}{section}
\newcommand{\ba}[1]{\begin{align}\label{#1}}
\newcommand{\ea}{\end{align}}
\numberwithin{equation}{section}
\newcommand{\ben}{\begin{equation*}}
\newcommand{\een}{\end{equation*}}
\numberwithin{equation}{section}
\renewenvironment{proof}[1][\relax]
  {\paragraph{Proof\ifx#1\relax\else~of #1\fi}}%
  {~\hfill$\square$\par\bigskip}
\newcommand{\calC}{\mathcal{C}}
\newcommand{\calD}{\mathcal{D}}
\newcommand{\calE}{\mathcal{E}}
\newcommand{\calF}{\mathcal{F}}
\newcommand{\calG}{\mathcal{G}}
\newcommand{\calH}{\mathcal{H}}
\newcommand{\calR}{\mathcal{R}}
\newcommand{\bbN}{\mathbb{N}}
\newcommand{\bbR}{\mathbb{R}}
\newcommand{\bbZ}{\mathbb{Z}}
\newcommand{\ga}{\gamma}
\newcommand{\de}{\delta}
\newcommand{\ep}{\varepsilon}
\newcommand{\eps}{\ep}
\newcommand{\om}{\omega}
\newcommand{\Om}{\Omega}
\newcommand{\rk}[1]{\bgroup\color{red}%
  \par\medskip\hrule\smallskip%
  \noindent\textbf{#1}%
  \par\smallskip\hrule\medskip\egroup}
\title{The phase transitions of the planar random-cluster and Potts models with $q \geq 1$ are sharp}
\author{Hugo Duminil-Copin and Ioan Manolescu}
\date{\today}
\newcommand{\Lat}{\calG}
\newcommand{\bfS}{\mathsf{S}}
\newcommand{\bfR}{\mathsf{R}}
\newcommand{\Ball}{\Lambda}
\newcommand{\pd}{\partial}
\newcommand\cond{\, | \,}
\newcommand\lra{\leftrightarrow}
\newcommand\xlra{\xleftrightarrow}
\DeclarePairedDelimiter{\floor}{\lfloor}{\rfloor}
\newcommand\cf{c_\mathbf{f}}  
\def\mik{1}
\newcommand\cpsfrag[2]{\ifnum\mik=1\psfrag{#1}{#2}\fi}
\newcommand{\Latk}{\tilde{\Lat}}  
\begin{document}

\maketitle

\begin{abstract}
  We prove that random-cluster models with $q \geq 1$ on a variety of planar lattices have a sharp phase transition, 
  that is that there exists some parameter $p_c$ below which the model exhibits exponential decay 
  and above which there exists a.s. an infinite cluster. 
  The result may be extended to the Potts model via the Edwards-Sokal coupling. 
  
  Our method is based on sharp threshold techniques and certain symmetries of the lattice; 
  in particular it makes no use of self-duality. 
  Part of the argument is not restricted to planar models and may 
  be of some interest for the understanding of random-cluster and Potts models in higher dimensions.
 
  Due to its nature, this strategy could be useful in studying other planar models 
  satisfying the FKG lattice condition and some additional differential inequalities.  
  \end{abstract}

\section{Introduction}

\paragraph{Main statement.}
The random-cluster model (or FK percolation) was introduced by Fortuin and Kasteleyn in 1969 
as a class of models satisfying specific series and parallel laws. 
It is related to many other models, including the $q$-state Potts models ($q = 2$ being the particular case of the Ising model). 
In addition to this, the random-cluster model exhibits a variety of interesting features, 
many of which are still not fully understood. 

Consider a finite graph $G = (V_G,E_G)$. 
The random-cluster measure with edge-weight $p\in [0,1]$ and cluster-weight $q > 0$ on $G$ 
is a measure $\phi_{p,q,G}$ on configurations $\omega\in\{0,1\}^{E_G}$. An edge is said to be {\em open} (in $\omega$) if $\omega(e)=1$, otherwise it is {\em closed}. The configuration  $\omega$ can be seen as a subgraph of $G$ with vertex set $V_G$ and edge-set $\{e\in E_G:\omega(e)=1\}$. 
A {\em cluster} is a connected component of $\omega$. 
Let $o(\omega)$, $c(\omega)$ and $k(\omega)$ denote the number of open edges, closed edges and clusters  in $\omega$ respectively. 
The probability of a configuration is then equal to
\begin{equation*}
\phi_{p,q,G}(\omega)=\frac{p^{o(\omega)}(1-p)^{c(\omega)}q^{k(\omega)}}{Z(p,q,G)},
\end{equation*}
where $Z(p,q,G)$ is a normalizing constant called the partition function. 

Consider a connected planar locally-finite doubly periodic graph $\Lat$, 
i.e. a graph which is invariant under the action of some lattice $\Lambda\simeq\bbZ\oplus\bbZ$.  
The model can be extended to $\Lat$ by taking limits of measures 
on finite graphs $G_n$ tending to $\Lat$ 
(with certain boundary conditions, see Section~\ref{sec:basic_prop} for details).
We call such limits \emph{infinite-volume} measures. 
As discussed later, for any pair of parameters $p \in [0,1]$ and $q \geq 1$, 
at least one infinite-volume measure exists, but it is not necessarily unique.  
For $q\ge 1$, the infinite-volume model exhibits a phase transition at some critical parameter $p_c(q)$ (depending on the lattice). 
The aim of the present paper is to give a  proof of the sharpness of this phase transition.

\begin{theorem}\label{thm:main}
  Fix $q\ge1$. Let $\Lat$ be a planar locally-finite doubly periodic connected graph 
  invariant under reflection with respect to the line $\{(0,y), y \in \bbR\}$
  and rotation by some angle $\theta \in (0,\pi)$ around $0$. 
  There exists $p_c=p_c(\Lat) \in [0,1]$ such that
  \begin{itemize}[nolistsep,noitemsep]
  \item for $p < p_c$, there exists $c=c(p,\Lat)>0$ such that for any $x,y\in\Lat$,
    \begin{align}
      \phi_{p,q}[x\text{ and }y\text{ are connected by a path of open edges}]\le \exp(-c|x-y|), \label{eq:exp_decay0}
    \end{align}
  \item for $p > p_c$, there exists a.s. an infinite open cluster under $\phi_{p,q}$,
  \end{itemize}
  where $\phi_{p,q}$ is the unique infinite-volume random-cluster measure on $\Lat$ 
  with edge-weight $p$ and cluster-weight $q$.
\end{theorem}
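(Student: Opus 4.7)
The task is to exhibit a threshold $p_c$ and establish exponential decay below it, while the existence of an infinite cluster above $p_c$ will follow essentially from the definition. Concretely, I would set $p_c(q) := \sup\{p \in [0,1] : \phi_{p,q}[0 \leftrightarrow \infty] = 0\}$. By the FKG lattice condition (which holds for $q \geq 1$) and the standard stochastic monotonicity of $\phi_{p,q}$ in $p$, the percolation probability is nondecreasing, so $p_c$ is well-defined and an infinite cluster exists a.s.\ for $p > p_c$. Uniqueness of the infinite-volume measure at $p \neq p_c$ should then follow from the exponential decay itself in the subcritical regime and from a Burton--Keane argument combined with translation invariance in the supercritical regime.

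For the exponential decay below $p_c$, I would introduce an auxiliary threshold governed by crossings of large ``rhombi'' adapted to the symmetries of $\Lat$. Let $\mathcal{H}_n$ denote the event that such a rhombus of scale $n$ is crossed the long way, and set
\begin{equation*}
  \widetilde p_c := \inf\bigl\{p \in [0,1] : \liminf_n \phi_p[\mathcal{H}_n] > 0\bigr\}.
\end{equation*}
One direction --- that $p < \widetilde p_c$ implies exponential decay of two-point connection probabilities --- follows from a standard renormalization/block argument using only the FKG property and finite energy; in particular, no self-duality is invoked. The substance of the theorem is then the reverse inequality $\widetilde p_c \geq p_c$: if crossing probabilities stay bounded away from $0$, then an infinite cluster must exist.

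To prove $\widetilde p_c \geq p_c$, I would argue via sharp threshold. By a random-cluster Russo formula, $\tfrac{d}{dp}\phi_p[\mathcal{H}_n]$ is, up to a bounded factor, a sum of edge influences. The reflection symmetry across $\{(0,y) : y \in \bbR\}$ and the rotation by $\theta$ can be combined to symmetrize $\mathcal{H}_n$, ensuring that no edge has large influence; a BKKKL / Graham--Grimmett sharp threshold inequality for FKG measures then yields, whenever $\phi_p[\mathcal{H}_n]$ is bounded away from $0$ and $1$,
\begin{equation*}
  \tfrac{d}{dp}\phi_p[\mathcal{H}_n] \;\geq\; c \log n \cdot \phi_p[\mathcal{H}_n]\bigl(1 - \phi_p[\mathcal{H}_n]\bigr).
\end{equation*}
Integrating over any interval $(\widetilde p_c, \widetilde p_c + \varepsilon)$ forces $\phi_p[\mathcal{H}_n] \to 1$ strictly above $\widetilde p_c$. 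Using the rotation by $\theta$ to produce crossings at distinct orientations, one would then glue such crossings into closed circuits around annuli at every scale, and a Borel--Cantelli argument produces an a.s.\ infinite cluster.

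The main obstacle lies in the sharp threshold step: the BKKKL-type inequality requires a quantitative bound on the maximal edge influence, and on a general doubly periodic lattice obtaining such a bound without self-duality demands a careful symmetrization of the crossing event --- this is precisely why the reflection and rotation symmetries enter the hypotheses. A secondary difficulty is that the random-cluster analogue of Russo's formula acquires additional terms from the cluster-counting weight $q^{k(\omega)}$, so the monotone-event machinery must be adapted to the positive-association structure rather than to a product measure; the condition $q \geq 1$ is crucial here. Taken together, these two difficulties explain why the argument splits into a dimension-independent differential-inequality part and a genuinely planar geometric part that exploits the assumed symmetries.
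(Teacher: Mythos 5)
Your high-level decomposition --- a crossing threshold $\widetilde p_c$, a ``crossings small $\Rightarrow$ exponential decay'' step, and a sharp-threshold step to close the gap with $p_c$ --- matches the paper's skeleton. But you have inverted where the difficulty lies, and one of your steps is not merely unproved but would fail as stated.

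The real gap is your dismissal of the step ``$p<\widetilde p_c$ implies exponential decay'' as a ``standard renormalization/block argument using only the FKG property and finite energy.'' That claim is false for the random-cluster model. The standard block argument (Kesten's, for Bernoulli percolation) rests squarely on independence: if a $2n\times n$ box has small easy-direction crossing probability, one tiles a larger box and multiplies failure probabilities. With $q>1$ there is no independence and no such multiplication. The paper replaces it by a differential inequality in $p$ involving the expected Hamming distance to the crossing event (their Theorem~2.6, used in Lemma~3.3 and Proposition~3.1); this mechanism \emph{forces you to lower the edge-weight} to conclude decay, a point the authors emphasize explicitly in Remark~1.6. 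Worse, your $\mathcal H_n$ is a hard-direction crossing, so before any block argument you first need an RSW-type implication ``easy crossings bounded away from $0$ $\Rightarrow$ hard crossings bounded away from $0$.'' That is Proposition~4.1 and Lemma~4.4 (the ``many separated crossings'' construction), the genuinely difficult part of the paper, and it too requires increasing the edge-weight and repeatedly exploits the reflection and rotation symmetries. Nothing in your proposal supplies these two ingredients, and ``finite energy'' does not substitute for either.

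Correspondingly, your identification of the sharp-threshold step as ``the main obstacle'' is backwards. That step is by now fairly routine: the paper applies the Graham--Grimmett inequality (their Theorem~2.5), and bounds the conditional influence of an edge not by symmetrization but by a monotone coupling (Proposition~5.2) which shows that the influence is dominated by a one-arm probability $\phi_{p_1}(0\lra\partial\Lambda_n)$; this vanishes automatically because $p_1<p_c$, so no influence estimate $\lesssim 1/\log n$ from symmetry is needed at all. The reflection and rotation hypotheses enter not there but in the RSW lemma and in the claim inside Proposition~4.1 ($\pi/2$-rotation to compare $\calC_h(\tfrac12 n,n)$ with $\calC_v(2m,m)$). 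Finally, your concluding Borel--Cantelli circuit argument requires a summable rate on $1-\phi_p[\mathcal H_n]$, which your sharp-threshold inequality does not obviously deliver; the paper avoids this by passing to the dual, applying Step~1 to the dual measure to get dual exponential decay, and then invoking a Zhang/Borel--Cantelli argument (Proposition~5.4) where the rate is genuinely exponential.
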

\begin{remark}
	The fact that, for $p \neq p_c$, there exists a unique infinite-volume measure with edge-weight $p$
	may easily be shown by adapting \cite[Thm. 6.17]{Gri06}.
\end{remark}

The sharpness of the phase transition was proved in arbitrary dimension 
for percolation in \cite{AizBar87,Men86} 
and for the Ising model in \cite{AizBarFer87}. 
For planar random-cluster models with arbitrary cluster-weight $q\ge1$, 
the sharpness had been previously derived only in the case of the 
square, triangular and hexagonal lattices, see \cite{BefDum12}. 
A similar result is proved for so-called isoradial graphs in \cite{DumMan13}. 
It may be worth mentioning that, contrary to the present work, \cite{BefDum12} and \cite{DumMan13} 
are both based on integrability properties of the model.


The exponential decay of the two-point function is key to the study of the subcritical phase. 
It implies properties such as 
exponential decay of the cluster-size, finite susceptibility, Ornstein-Zernike estimates and mixing properties, to mention but a few.
We do not go into details here, but rather refer the reader to the monographs \cite{Gri99a,Gri06} for further reading. 

Our method is based on the \emph{sharp threshold} property and on certain symmetries of the lattice.
A corollary of our results is that self-dual models are critical. 
\begin{corollary}
  The critical parameters $p_c(q)$ of the square, triangular and hexagonal lattices satisfy 
  \begin{align*}
  &\text{on the square lattice: } p_c(q)=\sqrt q/(1+\sqrt q),\\
  &\text{on the triangular lattice: }  p_c(q) \text{ is the unique solution $p$ in $[0,1]$ of }p^3+3p^2(1-p)=q(1-p)^3,\\
  &\text{on the hexagonal lattice: } p_c(q) \text{ is the unique solution $p$ in $[0,1]$ of }p^3-3qp(1-p)^2=q^2(1-p)^3.
  \end{align*}
\end{corollary}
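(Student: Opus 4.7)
The plan is to apply Theorem 1.1 combined with planar duality, supplemented by the star--triangle transformation in the triangular and hexagonal cases. Recall that the dual edge-weight of $p\in(0,1)$ is the unique $p^\ast\in(0,1)$ satisfying $pp^\ast/[(1-p)(1-p^\ast)] = q$, and that the dual of the random-cluster measure on a lattice $\Lat$ at $p$ is the random-cluster measure on $\Lat^\ast$ at $p^\ast$. The square lattice is self-dual, while the triangular and hexagonal lattices are mutual duals; in each case the value appearing in the corollary is precisely the unique self-dual point arising from this relation (after incorporating the star--triangle identity for the triangular and hexagonal lattices).

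For the square lattice, write $p_{sd} := \sqrt q/(1+\sqrt q)$, the unique fixed point of $p\mapsto p^\ast$. If $p_c < p_{sd}$, pick $p\in(p_c,p_{sd})$; then $p>p_c$ provides an infinite primal cluster almost surely, while $p^\ast>p_{sd}>p_c$ together with duality provides an infinite dual cluster almost surely. A Zhang-type argument based on the reflection and $\pi/2$-rotation symmetries of $\bbZ^2$, finite energy, and the Burton--Keane uniqueness of the infinite cluster rules out such coexistence, a contradiction. If on the other hand $p_c > p_{sd}$, Theorem 1.1 yields exponential decay of primal connections (and hence, by self-duality, of dual connections) at $p_{sd}$. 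But at $p_{sd}$ a horizontal primal crossing of an $n\times n$ box is the complement of a vertical dual crossing, and the lattice symmetries force both probabilities to equal $1/2$; this is incompatible with exponential decay via a union bound over starting vertices on one side of the box. Hence $p_c = p_{sd}$ on $\bbZ^2$.

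For the triangular and hexagonal lattices, the above scheme must be supplemented by the star--triangle transformation. The duality $\Lat_{\mathrm{tri}}^\ast = \Lat_{\mathrm{hex}}$ together with the formula for $p^\ast$ relates the two critical parameters, so it suffices to determine one of them. The star--triangle transformation is a local move that swaps the three edges of a triangular face with those of a star, and which preserves the random-cluster law provided the edge-weights satisfy a specific polynomial identity. Iterating this move globally and combining it with the duality pins the self-dual critical parameter to the unique root in $[0,1]$ of the displayed triangular (respectively hexagonal) equation. The crossing and Zhang arguments of the previous paragraph then transpose verbatim, using now the $\pi/3$-rotation symmetry required by Theorem 1.1. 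The main obstacle, in my view, is the star--triangle step: one must verify that the transformation, iterated at infinite volume with appropriate boundary conditions, genuinely transports both exponential decay and the existence of infinite clusters between the two lattices at the relevant parameter values, carefully accounting for the local changes in cluster and edge counts the move induces and for its effect on the crossing events used to derive the contradictions.
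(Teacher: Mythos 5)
The paper does not actually write out a proof of this corollary; it attributes it to \cite{BefDum12} and points to [Gri06, Sec.~6.6] for the star--triangle transformation, so your proposal is a reconstruction of the argument the authors have in mind rather than something to compare line by line. Your square-lattice dichotomy around the self-dual point is correct. A small streamlining: once exponential decay of \emph{dual} connections at $p_{sd}$ is in hand (which follows from $p_{sd}^\ast = p_{sd}$ and uniqueness of the infinite-volume measure at $p_{sd}\neq p_c$, since $\mathcal D_q\subseteq\{p_c\}$), Proposition~\ref{prop:main} gives $p_{sd}\geq p_c$ directly, sidestepping the ``crossing probability exactly $1/2$'' step --- which in any case needs a box of dimensions $n\times(n+1)$ rather than $n\times n$ for the primal and dual crossing events to be exact complements of one another under a quarter-turn.

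On the triangular and hexagonal lattices you have the right structure: duality gives one relation between $p_c(\mathrm{tri})$ and $p_c(\mathrm{hex})$, the star--triangle transformation gives the other, and together they pin down both as the roots of the stated cubics. However, you present the star--triangle step as an open obstacle, whereas it is precisely the content of the reference the corollary cites: in [Gri06, Sec.~6.6] the transformation is a coupling at the level of measures, valid exactly when the edge-weights satisfy the cubic compatibility relation (which, specialised to the homogeneous case, is the displayed polynomial), and it preserves all connectivities between vertices present on both sides of the move; hence it transports both exponential decay and the existence of an infinite cluster between the two lattices, and one checks in passing that the star--triangle parameter and the dual parameter coincide, giving the ``generalised self-duality'' you invoke. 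With that citation, rather than treating it as unresolved, your argument closes. Two small cautions for that case: the relevant rotational symmetry is $\pi/3$, not $\pi/2$ (the paper remarks in Section~\ref{sec:notation} that the results adapt to $\theta=\pi/3$), and the Zhang and crossing arguments must therefore be run on domains compatible with the hexagonal symmetry group rather than on axis-aligned rectangles.
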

The model on the square lattice with the above parameter is indeed self-dual;
the ones on the triangular and hexagonal lattices are not \emph{per se}. 
They are dual to each other, but also related through the star--triangle transformation (see \cite[Sec. 6.6]{Gri06}).

As mentioned above, the previous corollary was obtained in \cite{BefDum12}. Nevertheless, the present method has the advantage of using self-duality for the identification of the critical point only, and not for the proof of sharpness (in \cite{BefDum12}, the self-duality is used in the proof of a Russo-Seymour-Welsh type estimate leading to the sharpness of the phase transition).

\paragraph{Extensions of Theorem~\ref{thm:main}}
We discuss several (potential) generalisations of the previous theorem.

First, the biperiodic graph $\Lat=(V_\Lat,E_\Lat)$ may be replaced by a weighted biperiodic graph $(\Lat,J)$, 
where $J$ is a family of strictly positive weights on edges. 
For any subgraph $G=(V_G,E_G)$ of $\Lat$ and $\beta \geq 0$, we define
\begin{equation}\label{eq:def J}
	\phi_{\beta,q,G,J}(\omega)=\frac{\Big(\prod_{e\in E_G}(e^{\beta J_e}-1)^{\omega(e)}\Big)\cdot q^{k(\omega)}}{Z(\beta,q,G,J)},
\end{equation}
where $Z(\beta,q,G,J)$ is a normalizing constant.
One may easily see that in the case of $J_e=J$ for any $e\in E_G$, we obtain the previous definition with $p=1-e^{-J\beta}.$
As before, infinite-volume measures may be defined on $\Lat$ by taking limits. 

\begin{theorem}\label{thm:main_inhom}
  Fix $q\ge1$. Let $\Lat$ be a planar locally-finite doubly periodic connected weighted graph 
  invariant under reflection with respect to the line $\{(0,y), y \in \bbR\}$
  and rotation by some angle $\theta \in (0,\pi)$ around $0$. 
  There exists $\beta_c=\beta_c(\Lat,J) \geq 0$ such that
  \begin{itemize}[nolistsep,noitemsep]
  \item for $\beta < \beta_c$, there exists $c=c(\beta,\Lat,J)>0$ such that for any $x,y\in\Lat$,
    $$\phi_{\beta,q,J}[x\text{ and }y\text{ are connected by a path of open edges}]\le \exp(-c|x-y|),$$
  \item for $\beta > \beta_c$, there exists a.s. an infinite open cluster under $\phi_{\beta,q,J}$,
  \end{itemize}
 where $\phi_{\beta,q,J}$ is the unique infinite-volume random-cluster measure 
  on $\Lat$ with parameters $q$ and $\beta$. 
\end{theorem}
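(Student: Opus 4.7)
The plan is to reduce Theorem~\ref{thm:main_inhom} to Theorem~\ref{thm:main} by means of the \emph{parallel law} for random-cluster models, which lets one simulate an edge of weight $J_e$ at inverse temperature $\beta$ by a bundle of $m_e$ unit-weight parallel edges at a common parameter $p = 1 - e^{-\beta/N}$, whenever $m_e = NJ_e$.

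\emph{Step~1 (rational weights).} Assume first that $NJ_e \in \bbN$ for all $e$ and some common $N \in \bbN$. Build $\Lat'$ from $\Lat$ by replacing each edge $e$ with $m_e = NJ_e$ parallel edges. Because $J$ is invariant under the reflection and rotation appearing in the statement, so is $(m_e)$, and $\Lat'$ remains a planar, locally-finite, doubly periodic graph satisfying the hypotheses of Theorem~\ref{thm:main} (if one insists on a simple graph, each bundle can be separated by short symmetric subdivisions without breaking the symmetries). A direct marginalization over the parallel copies of each edge shows that the homogeneous measure $\phi_{p,q,\Lat'}$ with $p = 1 - e^{-\beta/N}$ projects onto $\phi_{\beta,q,\Lat,J}$, because
\[
1 - (1-p)^{m_e} = 1 - e^{-\beta J_e},
\]
so all connection probabilities coincide. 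Applying Theorem~\ref{thm:main} to $\Lat'$ yields $p_c(\Lat')$, and $\beta_c(\Lat,J) := -N \log\bigl(1 - p_c(\Lat')\bigr)$ carries the required dichotomy back to the weighted model.

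\emph{Step~2 (general weights).} For arbitrary positive weights $J$, select periodic rational approximations $J^{(n),-} \uparrow J$ and $J^{(n),+} \downarrow J$ edge by edge, preserving the symmetries. The FKG lattice condition, which survives edgewise rescaling of weights, yields the stochastic dominations
\[
\phi_{\beta,q,\Lat,J^{(n),-}} \preceq \phi_{\beta,q,\Lat,J} \preceq \phi_{\beta,q,\Lat,J^{(n),+}}.
\]
Step~1 supplies rational thresholds $\beta_c^{(n),\pm}$; defining $\beta_c(\Lat,J)$ as the supremum of $\beta$ at which exponential decay holds, one checks via continuity of finite-volume connection probabilities in $(J_e)$ that both $\beta_c^{(n),-}$ and $\beta_c^{(n),+}$ converge to $\beta_c(\Lat,J)$. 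Exponential decay for $\beta < \beta_c$ is then inherited by stochastic domination from $J^{(n),+}$ for $n$ sufficiently large, and the existence of an infinite cluster for $\beta > \beta_c$ from $J^{(n),-}$.

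The main obstacle lies in Step~2: one must argue that the two-sided rational approximations sandwich the correct critical value. Concretely, for any $\beta$ strictly below the would-be limit, one needs a rational $J^{(n),+} \ge J$ with $\beta < \beta_c^{(n),+}$, so that exponential decay at $\beta$ for $J^{(n),+}$ transfers to $J$. This amounts to upper semicontinuity of the critical parameter under small periodic perturbations of the weights. An alternative that bypasses Step~1 altogether is to observe that the proof of Theorem~\ref{thm:main} relies only on the FKG inequality, on the monotonicity of $\phi_{\beta,q,J}$ in the edge parameters, and on the reflection/rotation symmetries of the lattice---none of which is affected by an inhomogeneous but symmetric weighting. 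Reinspecting that proof in this light yields Theorem~\ref{thm:main_inhom} directly, at the cost of some additional bookkeeping of edge weights throughout the sharp-threshold calculations.
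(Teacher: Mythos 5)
The paper disposes of this theorem with a single remark: the proof is obtained by re-running the entire argument for Theorem~\ref{thm:main} with the edge-parameter $p$ replaced by $p_e = 1 - e^{-\beta J_e}$ throughout, noting that every ingredient used (FKG and comparison inequalities, the domain Markov property, the two differential inequalities, and the lattice symmetries) survives the weighting because $J$ is periodic and invariant under the reflection and rotation. This is exactly the alternative you sketch in your final paragraph, and as such that part of your proposal coincides with the paper's route.

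Your primary proposal --- reduce to Theorem~\ref{thm:main} via the parallel law plus rational approximation --- is a genuinely different and appealing idea, but as written it has a concrete gap in Step~2 and an under-specified point in Step~1. For Step~1: the graph $\Lat'$ obtained by replacing $e$ by $m_e$ parallel edges is a multigraph, and the hypotheses of Theorem~\ref{thm:main} explicitly require an embedding in which edges are straight segments meeting only at endpoints, which rules out parallel edges. Your suggested fix (symmetric subdivision) does work, but it is not a one-line remark: once you subdivide, you must compose the series law with the parallel law, and you have to choose a single uniform half-edge parameter $p'$ solving $p'^2/(p'^2 + q(1-p')^2) = p$ so that the subdivided lattice is again homogeneous. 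For Step~2: the justification ``via continuity of finite-volume connection probabilities in $(J_e)$'' does not yield convergence of the critical parameters. Finite-volume continuity is uniform over fixed $G$ but not over the thermodynamic limit; the $n\to\infty$ and $J^{(n),\pm}\to J$ limits do not commute, and this is precisely the kind of interchange that a sharpness theorem is meant to handle, not presuppose. The gap can be filled, but by a different mechanism than the one you invoke: since $J$ is periodic with strictly positive weights, $J_{\min}:=\min_e J_e > 0$, and a uniform perturbation $0\le J'_e - J_e \le \epsilon$ can be absorbed into the inverse temperature via
\begin{align*}
1 - e^{-\beta J'_e} \;\le\; 1 - e^{-\beta(1+\epsilon/J_{\min})\, J_e},
\end{align*}
so that $\phi_{\beta,q,J'} \preceq \phi_{\beta(1+\epsilon/J_{\min}),q,J}$ (and symmetrically from below). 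Combined with the monotone comparison $\tilde\beta_c(J^{(n),+})\le\tilde\beta_c(J)\le\tilde\beta_c(J^{(n),-})$ this gives $\tilde\beta_c(J^{(n),\pm})\to\tilde\beta_c(J)$ and the analogous convergence of the percolation thresholds, after which the sandwich argument you describe does close. So the reduction can be made rigorous, but it requires this $\delta J\leftrightarrow\delta\beta$ trade rather than an appeal to finite-volume continuity, and once the bookkeeping is done it is arguably no lighter than simply re-running the proof in the weighted setting, which is what the paper does.
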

The proof of this theorem follows {\em exactly} the same lines as the one of Theorem~\ref{thm:main} 
except that the notation becomes heavier. Thus we will only focus on Theorem~\ref{thm:main}.

A second potential extension is to planar random-cluster models with finite range interactions. 
Consider a planar graph $\Lat=(V_\Lat,E_\Lat)$ with the properties of Theorem~\ref{thm:main}. 
For some $R \geq 1$ define a modified graph $\Latk =(V_\Lat,E_{\Latk})$, 
with same vertex set as $\Lat$ but with $(u, v) \in E_{\Latk}$
if the graph distance between $u$ and $v$ in $\Lat$ is less than or equal to $R$. 
(For $R= 1$, $\Latk = \Lat$.)

We believe that our methods may be modified to prove Theorem~\ref{thm:main} 
(and its inhomogeneous version Theorem~\ref{thm:main_inhom}) for $\Latk$. 
In particular we expect that Theorem~\ref{thm:main} also applies to the random-cluster model on slabs, 
i.e. on the graphs of the form $\Lat \times \{0, \dots, R\}^d$ with $d, R \geq 1$. 
We discuss this further in a forthcoming article. 

A final potential extension is to models other than the random-cluster model. 
Our arguments are somewhat generic, 
and one can try to use them for models similar to those studied here. 
More precisely, to obtain our result, we only need the model to satisfy the conditions listed in Section~\ref{sec:conditions}.
We discuss this point further in Section~\ref{sec:conditions}, when the appropriate notation is in place.

\paragraph{Consequences for the Potts model.}

Fix some finite weighted graph $\big(G,J\big)$, 
where $J = (J_e)_{e \in E_G}$ is a family of positive real numbers.
Also fix a set of parameters $\beta \geq 0$ and $q \in \bbN$ with $q \geq 2$. 
The Potts model on $G$ with $q$ states and inverse temperature $\beta$ 
is a probability measure $\mu_{\beta, q,G,J}$ on $\{1, \dots, q\}^{V_G}$,
for which the weight of a configuration $\sigma$ is given by 
$$ \mu_{\beta, q, G,J}(\sigma) = \frac{e^{-\beta H_{q,G,J}(\sigma)}}{Z_{\beta, q, G,J}^{\textrm{Potts}}},$$
where 
$$H_{q,G,J}(\sigma)=-\sum_{e=(x,y)\in E_G}J_e{\bf 1}_{\sigma_x=\sigma_y}$$
and $Z_{\beta, q, G,J}^{\textrm{Potts}}$ is a normalizing constant.
The sum in the second equation is taken over all unordered pairs of neighbours $x,y$. 

A well-known coupling (sometimes called the Edwards-Sokal coupling) links the Potts and random-cluster models. 
We only briefly describe how to obtain the former from the latter. 
For details see \cite[Thm 4.91]{Gri06}.

Choose a random-cluster configuration $\om$ according to $\phi_{\beta,q,G,J}$, where $\phi_{\beta,q,G,J}$ is defined as in~\eqref{eq:def J}.
Assign to each cluster of $\omega$ a state (or colour) chosen uniformly in $\{1, \dots, q\}$, independently for different clusters.
This generates a random configuration $\sigma \in \{1, \dots, q\}^{V_G}$.
(Note the two sources of randomness used in generating $\sigma$: 
the randomness in the choice of $\om$ and that in the colouring of the clusters of $\om$.)
Then $\sigma$ follows the Potts measure $\mu_{\beta, q, G,J}$.

Consider now a planar locally-finite doubly periodic weighted graph $(\Lat,J)$. 
As for the random-cluster, infinite-volume Potts measures may be defined. 
The phase transition in this case is decided by the existence of long-range correlations. 
In particular, if $\beta_c$ is the critical parameter, then 
\begin{itemize}[noitemsep,nolistsep]
\item for $\beta < \beta_c$, there exists a unique infinite-volume measure (long-range correlations vanish), 
\item for $\beta > \beta_c$, there exist multiple infinite-volume measures (long-range correlations exist).
\end{itemize}
It follows trivially from the above coupling that 
$$\mu_{\beta, q, G,J}(\sigma_x =  \sigma_y) = \frac1{q} + \frac{q-1}{q}\phi_{\beta, q, G,J}(x\text{ and }y\text{ are connected by a path of open edges}),$$
hence the phase transition of the Potts model can be linked to that of the associated random-cluster model. 
In particular, when $J_e = J$ for all $e \in E_{\Lat}$, 
$\beta_c(q) = -\frac{1}{J} \log ( 1- p_c(q))$. 

Our main result may be translated as follows. 
\begin{theorem}
  Fix $q\ge2$. Let $\Lat$ be a planar locally-finite doubly periodic connected weighted graph 
  invariant under reflection with respect to the line $\{(0,y), y \in \bbR\}$
  and rotation by some angle $\theta \in (0,\pi)$ around $0$. 
  There exists $\beta_c = \beta_c(\Lat,J) \geq 0$ such that, 
  \begin{itemize}[nolistsep,noitemsep]
  \item for $\beta < \beta_c$, there exists a unique infinite-volume Potts measure $\mu_{\beta,q,J}$ 
  	with parameters $\beta$ and $q$ on $(\Lat, J)$. 
    Moreover there exists $c=c(\beta,\Lat)>0$ such that for any $x,y\in\Lat$,
    \begin{align*}
      \mu_{\beta,q,J}[\sigma_x = \sigma_y] - \frac1{q}\le \exp(-c|x-y|), 
    \end{align*}
  \item for $\beta > \beta_c$, there exist multiple infinite-volume Potts measures with parameters $\beta$ and~$q$ on~$(\Lat, J)$. 
  \end{itemize}
\end{theorem}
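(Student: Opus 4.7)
The plan is to deduce this theorem from Theorem~\ref{thm:main_inhom} together with the Edwards--Sokal coupling described just before the statement. Let $\beta_c = \beta_c(\Lat, J)$ be the critical parameter produced by Theorem~\ref{thm:main_inhom}. The starting point is the finite-volume identity
\[
  \mu_{\beta, q, G, J}(\sigma_x = \sigma_y) - \tfrac{1}{q} = \tfrac{q-1}{q}\, \phi_{\beta, q, G, J}\bigl(x \leftrightarrow y\bigr),
\]
valid for every finite subgraph $G$ of $\Lat$ and any boundary condition on $G$, because the coupling is constructed edge-by-edge and cluster-by-cluster, and the colouring step is purely local given the random-cluster configuration.

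For $\beta < \beta_c$ I would first pass to infinite volume on both sides along an exhaustion $G_n \uparrow \Lat$. On the right-hand side, Theorem~\ref{thm:main_inhom} gives a unique infinite-volume random-cluster measure $\phi_{\beta, q, J}$ and the exponential decay $\phi_{\beta, q, J}(x \leftrightarrow y) \leq \exp(-c|x-y|)$, which immediately yields the quantitative bound on $\mu_{\beta,q,J}(\sigma_x = \sigma_y) - 1/q$ (after absorbing the factor $(q-1)/q$ into the constant $c$). For uniqueness of the Potts measure, the standard route is: exponential decay in the random-cluster model implies that, under any random-cluster boundary condition, every vertex is a.s. in a finite cluster, so the colouring of clusters in different finite-volume approximations becomes insensitive to the Potts boundary condition in the limit; concretely, the DLR equations for the Potts model, combined with the uniqueness of $\phi_{\beta,q,J}$ and the a.s.\ absence of infinite clusters, force all weak limits to coincide. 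This step is classical (see \cite[Thm.~4.91 and Ch.~5]{Gri06}) but needs to be invoked carefully.

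For $\beta > \beta_c$, I would construct distinct infinite-volume Potts measures $\mu^{(1)}, \dots, \mu^{(q)}$ by taking weak limits of $\mu_{\beta, q, G_n, J}$ with monochromatic boundary condition of colour $i$. Under the Edwards--Sokal coupling, monochromatic boundary conditions of any colour correspond to the wired random-cluster boundary condition, so the random-cluster marginal converges to $\phi^{1}_{\beta, q, J}$, which by Theorem~\ref{thm:main_inhom} has an infinite cluster almost surely. Under the coupling, this infinite cluster is forced to carry colour $i$ in $\mu^{(i)}$, while every finite cluster receives an independent uniform colour, so
\[
  \mu^{(i)}(\sigma_x = i) = \tfrac{1}{q} + \tfrac{q-1}{q}\,\phi^{1}_{\beta,q,J}(x \leftrightarrow \infty) > \tfrac{1}{q},
\]
whereas $\mu^{(j)}(\sigma_x = i) < 1/q$ for $j \neq i$ by the same calculation applied to colour $j$. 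Hence $\mu^{(1)}, \dots, \mu^{(q)}$ are pairwise distinct infinite-volume Potts measures.

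The main obstacle is not the quantitative decay, which is essentially a one-line consequence of Theorem~\ref{thm:main_inhom} via the coupling, but rather the uniqueness of the Potts measure for $\beta < \beta_c$: one must show that the absence of an infinite open cluster in the (unique) random-cluster infinite-volume measure propagates through the Edwards--Sokal construction to force every Potts Gibbs measure to coincide with $\mu_{\beta, q, J}$. All the needed ingredients (DLR equations, compatibility of boundary conditions under the coupling, ergodicity) are available in \cite{Gri06}, so the proof reduces to assembling them in the infinite-volume setting provided by Theorem~\ref{thm:main_inhom}.
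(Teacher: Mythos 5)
Your proposal is correct and takes essentially the route the paper intends: the paper states this Potts theorem as an immediate translation of Theorem~\ref{thm:main_inhom} via the Edwards--Sokal coupling and the identity $\mu(\sigma_x=\sigma_y)-\tfrac1q=\tfrac{q-1}{q}\,\phi(x\leftrightarrow y)$, without spelling out the details, and you have filled in precisely the standard steps it implicitly invokes (limit along an exhaustion, uniqueness below $\beta_c$ from absence of an infinite cluster via the DLR equations, multiplicity above $\beta_c$ from the wired infinite cluster under monochromatic boundary conditions). The one small overstatement is the claim that the finite-volume identity holds for ``any boundary condition'': as written it holds for free boundary conditions, whereas under coupled wired/monochromatic boundary the boundary cluster carries a deterministic colour rather than a uniformly random one --- but this does not affect your argument, since you pass to the infinite-volume limit in the subcritical case and explicitly account for the forced colour of the infinite cluster in the supercritical case.
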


\paragraph{Strategy of the proof.} 

Let $\phi_{p,q}^0$ be the infinite-volume measure on $\Lat$ with free boundary conditions (see the next section for a precise definition). 
It is obtained as the limit of random-cluster measures $\phi_{p,q,G_n}$ 
on finite subgraphs $G_n$ of $\Lat$ that tend increasingly to $\Lat$.
Define
\begin{align*}
  p_c & := \inf\big\{p\in(0,1)~:~\phi_{p,q}^0(x\text{ is connected by a path of open edges to infinity})>0\big\}\\
  \tilde p_c & := \sup \big\{ p \in(0,1) ~:~ \lim_{n\rightarrow \infty}-\tfrac1n
  \log\big[\phi_{p,q}^0(0\text{ and }\partial\Lambda_n\text{ are connected by a path of open edges})\big]>0\big\}.
\end{align*}
Note that $\tilde p_c\le p_c$. 
We wish to prove that $p_c=\tilde p_c$ (this is simply another way of stating the main result), 
and we therefore focus on the inequality $\tilde p_c\ge p_c$. 
The proof of the latter is based on the study of probabilities of crossing rectangles.
For the sake of simplicity, 
let us restrict our attention in this introduction to rectangles of width $2n$ and height $n$,
i.e. translates of $[0,2n] \times [0,n]$.
A rectangle is {\em crossed  horizontally (vertically)} if it contains a path of open edges
going from its left side to its right side (respectively from the bottom side to the top side). 
The strategy follows three main steps:

\begin{description}
\item[{\em Step 1.}] 
  {\em We first prove that for any $p > \tilde p_c$, the probability of crossing vertically (i.e. in the ``easy direction'') 
    a rectangle of size $2n \times n$ is bounded away from 0 uniformly in $n$}. 
  
  We show this by proving that for any $0<\eps<p$, if the $\phi_{p,q}^0$-probability 
  of crossing vertically a rectangle of size $2n \times n$ 
  drops below a certain benchmark (even for a single value of $n$), 
  then the $\phi_{p-\eps,q,\calG}$-probability that two points are connected by an open path decays exponentially 
  fast  (see Proposition~\ref{prop:easy} for the precise statement). 
  A similar (but stronger) statement was proved by Kesten for percolation \cite{Kes82}. 
  He proved that, given a percolation measure, if the probability of crossing the rectangle vertically is too small, 
  then exponential decay follows for that measure. 
  The difference with our result is that, in the case of percolation, one does not need to alter the parameter of the measure (see Remark~\ref{rem:10} for more details). 
  
  We highlight the fact that this part of the proof is not specific to the planar case. 
  
\item[{\em Step 2.}] 
  {\em Using the first step, we show that for any $p > \tilde p_c$, 
  the probability of crossing horizontally (i.e. in the ``hard direction'') 
  a rectangle of size $2n\times n$ is bounded away from 0 uniformly in $n$.}
  
  This step is the most difficult. It corresponds to proving a ``Russo-Seymour-Welsh'' (RSW) type result: 
  if crossing probabilities in the easy direction are bounded away from 0, then it is the same in the hard direction. 
  Such results were first proved in the context of Bernoulli percolation on the square lattice \cite{Rus78,SeyWel78}. 
  Similar statements have been recently obtained for the Ising model \cite{DumHonNol11,CheDumHon12} 
  and the random-cluster models with cluster-weight $1 \leq q \leq 4$ \cite{DumSidTas13}, but only for the square lattice. 
  These results usually represent the first step towards a deep understanding on the critical phase.

  In the present paper we prove a weaker statement than these RSW results: 
  we show that, if crossing probabilities in the easy direction are bounded away from $0$ for some edge-weight $p$, 
  then it is the same in the hard direction for any $p'>p$. 
  As in the first step, the difference with previous results 
  is that we need to increase the edge-weight to obtain the desired conclusion.

\item[{\em Step 3.}] 
  {\em We show that if $p < p' <p_c$ are such that the $\phi_{p,q}^0$-probability 
  of crossing horizontally a rectangle of size $2n\times n$ is bounded away from 0 uniformly in $n$, 
  then the $\phi_{p',q}^0$-probability of these events tends to $1$ as $n$ tends to $\infty$.}
 
   This step is based on an argument from \cite{GraGri11}
   that combines an influence theorem and a coupling argument to
   obtain a sharp threshold inequality (see Corollary~\ref{cor:gg_applied}). 
\end{description} 

Observe that these steps combine together to give the proof of the theorem. 
Indeed suppose $\tilde p_c < p_c$ and take  $\tilde p_c < p_0 < p_1 <  p_2 < p_c$.
By steps 1 and 2, the probabilities under $\phi_{p_0,q}^0$ of crossing in the hard direction 
rectangles of size $2n \times n$ are bounded away from $0$, uniformly in $n$. 
By step $3$ these crossing probabilities tend to $1$ under $\phi_{p_1,q}^0$. 
As a consequence the probability of a dual crossing 
in the easy direction of a $2n$ by $n$ rectangle tends to $0$. 
But step $1$ also applies to dual measures, hence, for the edge-weight $p_2$, 
the two-point function of the dual model decays exponentially fast. 
This implies via a classical argument that there exists an infinite-cluster in the primal model, 
and this is a contradiction. 

\begin{remark}\label{rem:10}
  The proofs of Steps 1 and 2 require varying the edge-weight $p$. Nevertheless, we expect that this is not indispensable. 
  Bernoulli percolation is an example for which the proofs of Steps 1 and 2 are valid without changing $p$, 
  but the known proofs of this fact rely heavily on independence. 
  In order to tackle more general models (in particular those having long-range dependence), 
  we employ the differential inequality~\eqref{eq:hamming} invoking the Hamming distance, which entails altering $p$. 
  The related differential inequality~\eqref{eq:ggsh1} is used in Step 3. 
  Exploiting them to their full strength is the main novelty of this article.
\end{remark}

\paragraph{Open questions.} 
We end this introduction by mentioning three related open questions. 

The first is to investigate to which other models the methods of this paper may be adapted. 
We discuss this in Section~\ref{sec:conditions}, where we identify specific conditions for such models.

The second is to obtain results similar to Theorem~\ref{thm:main} for lattices in dimensions $d > 2$. 
We believe that some of the techniques presented in this article can be harnessed in more general dimensions 
(we think in particular of Step 1 and inequalities~\eqref{eq:ggsh1} and~\eqref{eq:hamming}). 
Nevertheless, the methods of Steps 2 and 3 are based on certain features of planarity, 
and we are currently unable to extend Theorem~\ref{thm:main} to higher dimension.

Finally we mention a broader direction of research. 
Just as the method of \cite{BefDum12}, our article provides very little information 
on the critical phase of the random-cluster model. 
Recent results (for instance \cite{DumSidTas13,Smi10,CheDumHon12a,Dum13}) 
have illustrated that it is possible extract knowledge of the critical phase of random-cluster models 
from the theory of discrete holomorphic observables.  
But this theory is often based on integrability properties of the model, 
properties which are not true for general random-cluster models on planar locally-finite doubly periodic graphs. Therefore, it is very challenging to understand how to extend our knowledge 
of the critical random-cluster model on the square lattice to more general settings.
A first step towards this goal is to prove that the results of Steps 1 and 2 are valid without changing the edge-parameter.

\paragraph{Organisation of the paper.}
Section~\ref{sec:notation} is dedicated to defining the model 
and explaining the properties needed in the proof of Theorem~\ref{thm:main}.
The next sections follow the steps described above:
in Sections~\ref{sec:proof_RSW} and~\ref{sec:proof_RSW2} 
we prove two finite size criteria for exponential decay (corresponding to Steps~1 and~2)
that we then use in Section~\ref{sec:mainproof} to prove our main theorem (this corresponds to Step~3). 
In Section \ref{sec:extensions} we investigate a possible extension of the result to more general models. 

\section{Notations and basic facts on the model}\label{sec:notation}
\subsection{Graph definitions}

\paragraph{The lattice $\Lat$.}
Fix for the rest of the paper a locally-finite planar connected graph $\Lat = (V_\Lat,E_\Lat)$ embedded in the plane $\bbR^2$ 
(in such a way that edges are straight lines intersecting at their end-points only)
and assume there exist $u$ and $v\in\bbR^2$ non collinear, and $\theta\in(0,\pi)$ 
such that the following maps are graphs automorphisms of the embedded graph $\Lat$:
\begin{itemize}[noitemsep,nolistsep]
\item the translations by vectors $u$ and $v$, 
\item the rotation of angle $\theta$ around 0,
\item the orthogonal reflection with respect to the vertical line $\{(0,y),y\in\bbR\}$. 
\end{itemize}
It may be seen that, since $\Lat$ is required to be locally finite, 
there are only two possible values for $\theta$, namely $\frac{\pi}{3}$ and $\frac{\pi}{2}$.
The triangular lattice is an example corresponding to the first case, while the square lattice corresponds to the second
(obviously, other examples may be given in both cases). 
For simplicity, we will only treat the case $\theta=\frac\pi2$ in the following; 
the results also hold in the case $\theta = \frac{\pi}{3}$,  with some standard adjustments of the proofs. 
It may be shown that, if we allow some rescaling, we may consider the lattice to be invariant by
\begin{itemize}[nolistsep,noitemsep]
\item the translations by $(1,0)$ and $(0,1)$, 
\item rotation by $\frac{\pi }{2}$ around the origin,
\item the orthogonal symmetry with respect to the vertical line $\{(0,y), y \in \bbR\}$.
\end{itemize}
In the rest of this article, the graph $\Lat$ will be referred to  as \emph{the lattice}. 
Two vertices $x$ and $y$ of $V_\Lat$ are said to be neighbours if $(x,y)\in E_\Lat$. We then write $x\sim y$.

The graph $G=(V_G,E_G)$ will always denote a finite subgraph of $\Lat$, i.e.  
$E_G$ is a finite subset of $E_\Lat$ and $V_G$ is the set of end-points of $E_G$. 
We denote by $\pd G$ the boundary of $G$, i.e. $$\pd G=\{x\in V_G:\exists y\notin V_G\text{ with }x\sim y\}.$$

For $a < b$ and $c <d$, let $R = [a,b] \times [c,d]$ be the subgraph of $\Lat$ induced by the vertices of $V_\Lat$ in $[a,b]\times[c,d]$. 
This type of graph will be called a \emph{rectangle}. For $n \geq 0$, let $\Ball_n = [-n,n]^2$. 

\paragraph{Dual lattice and dual graphs.}
Let $\Lat^*$ be the dual lattice of $\Lat$, obtained by placing a vertex in each face of $\Lat$ 
and joining two vertices of $\Lat^*$ if the corresponding faces of $\Lat$ are adjacent. 
Note that $\Lat^*$ enjoys the same symmetries as $\Lat$. 
For $e\in E_\Lat$, set $e^*$ for the edge of $\Lat^*$ intersecting $e$.  
For a finite graph $G$, define $G^*$ to be the graph with edge-set $E_{G^*}:=\{e^*,e\in E_G\}$, 
and vertex-set $V_{G^*}$ given by the end-points of edges in $E_{G^*}$.

\paragraph{The space of configurations.} 
Let $G=(V_G,E_G)$ be a subgraph of $\calG$. We will always work with elements $\om$ of $\Om = \{0,1\}^{E_G}$, called  \emph{configurations}. 
Edges $e$ with $\om(e) = 1$ are called \emph{open} (in $\omega$), while others are \emph{closed} (in $\omega$).
As mentioned above, $\om$ can be seen as a subgraph of $G$ whose vertex-set is $V_G$ and edge-set is $\{e\in E_G:\omega(e)=1\}$. 

A path on $G$ is a sequence of vertices $u_0, \dots, u_n\in V_G$ 
with $(u_i,u_{i+1}) \in E_G$ for $i = 0, \dots, n-1$. 
It is called {\em open} if  $(u_i,u_{i+1})$ is open in $\omega$ for every $i$. 
Two vertices $a$ and $b$ are said to be \emph{connected} (in $\om$ on $G$),
if there exists an open path connecting them. 
The event that $a$ and $b$ are connected is denoted by  $a \xlra{\om, G} b$ 
(or simply $a\xlra{G}b$ or even $a \xlra{} b$ when no confusion is possible).
Two sets $A$ and $B$ are connected (denoted $A\xlra{} B$) 
if there exists a pair of connected vertices $(a,b) \in A\times B$.  
A maximal set of connected vertices is called a \emph{cluster}. 

When $G = [a,b]\times [c,d]$ is a rectangle and $A=\{a\}\times[c,d]$ and $B=\{b\}\times[c,d]$ 
(respectively $A=[a,b]\times\{c\}$ and $B=[a,b]\times\{d\}$),
the event $A\xlra{\om, G} B$ is also denoted $\calC_h([a,b]\times[c,d])$ (respectively $\calC_v([a,b]\times[c,d])$)
and if it occurs we say that $G$ is crossed horizontally (respectively vertically). 
An open path from $A$ to $B$ is called a horizontal crossing (respectively vertical crossing).
When $a=0$ and $c=0$, we simply write $\calC_h(b,d)$ and $\calC_v(b,d)$ for the events above. 
  When $b-a > d-c$, horizontal crossings are called crossings in the hard direction, 
  while vertical ones are crossings in the easy direction. 
  The terms are exchanged when $b-a < d-c$.

To each configuration $\om\in \Om$ is associated a dual configuration $\om^*$ on $G^*$ defined by 
$\om^*(e^*)=1-\om(e)$. A dual-path on $G^*$ is a sequence of vertices $u_0, \dots, u_n\in V_{G^*}$ 
with $(u_i,u_{i+1}) \in E_{G^*}$ for $i = 0, \dots, n-1$. 
It is called {\em dual-open} if  $\om^*(u_i,u_{i+1})=1$ for all $i$. 
Two dual-vertices $u$ and $v$ are said to be \emph{dual-connected} 
(written $u \xlra{\om^*,G^*} v$ or simply $u \xlra{*} v$ when no confusion is possible)
if there is a dual-open path connecting them.
A maximal set of connected dual-vertices is called a \emph{dual-cluster}. 
The definitions of crossings extend to dual configurations in the obvious way. 

\subsection{Basic properties of the random-cluster model}\label{sec:basic_prop}

For more details and proofs we direct the reader to \cite{Gri06} or \cite{Dum13}.

\paragraph{Boundary conditions.}
Let $G = (V_G,E_G)$ be a finite subgraph of~$\Lat$. 
A \emph{boundary condition}~$\xi$ is a partition of~$\partial G$. 
We denote by~$\om^\xi$ the graph obtained from the configuration~$\om$ 
by identifying (or \emph{wiring}) the vertices in~$\partial G$ that belong to the same element of the partition~$\xi$.
Boundary conditions should be understood as encoding how vertices are connected outside~$G$. 
The probability measure $\phi^{\xi}_{p,q,G}$ of the random-cluster model on~$G$ 
with parameters $p \in [0,1]$, $q \geq 0$ and boundary condition~$\xi$ 
is defined on~$\Om$ by
\begin{equation}
  \label{probconf}
  \phi_{p,q,G}^{\xi} (\omega) :=
  \frac {p^{o(\omega)}(1-p)^{c(\omega)}q^{k(\omega^\xi)}}
  {Z^{\xi}(p,q,G)},
\end{equation}
where $Z^{\xi}(p,q,G)$ is a normalizing constant referred to as the \emph{partition function}. 
Above, $o(\omega)$, $c(\omega)$ and $k(\omega^\xi)$ correspond to the number of open and closed edges of $\om$, 
and the number of clusters of $\om^\xi$.

Two specific boundary conditions are particularly important. 
The free boundary condition, denoted $0$, correspond to the partition composed of singletons only (no wiring between boundary vertices).
The wired boundary condition, denoted $1$, correspond to the partition $\{\partial G\}$ (all vertices are wired together). 
In addition to these two, we will sometimes consider boundary conditions induced by a configuration $\xi$ outside $G$:
two vertices are wired together if there exists a path between them in $\xi$. 
We will identify $\xi$ with the induced boundary condition and simply write $\phi_{p,q,G}^\xi$ for the corresponding measure. 


\paragraph{Domain Markov property.} 

Let $G\subset F$ be two finite subgraphs of $\Lat$. 
A configuration $\omega$ on $F$ may be viewed as a configuration on $G$ by taking its restriction $\omega_{|G}$ to edges of $G$. 
The restriction of the configuration $\omega$ to edges of $F \setminus G$ induces boundary conditions on $G$ as explained below. 
The domain Markov property states that for any $p,q$, any boundary condition $\xi$ on $F$ and any $\psi\in \{0,1\}^{E_F\setminus E_G}$,
\begin{equation}\label{domain Markov}
  \phi_{p,q,F}^\xi(\omega_{|G}=\cdot\,|\omega(e)=\psi(e),e\in E_F\setminus E_G)=\phi_{p,q,G}^{\psi^\xi}(\cdot),
\end{equation}
where $\psi^\xi$ is the partition induced by the equivalence relation $x\calR y$ if $x$ and $y$ are connected in $\psi^\xi$.

The domain Markov property implies the following finite-energy property. 
For any $\ep>0$, the conditional probability for an edge to be open, 
knowing the states of all the other edges, is bounded away from 
$0$ and $1$ uniformly in $p\in[\ep,1-\ep]$ and in the state of other edges. 
This property extends to finite sets of edges (with a constant which gets worse and worse as the cardinality of the set increases).

\paragraph{Stochastic ordering for $q\ge1$.} 
For any $G$, the set $\{0,1\}^{E_G}$ has a natural partial order. 
An event $A$ is increasing if for any $\omega\le \omega'$, $\omega\in A$ implies $\omega'\in A$. 
The random-cluster model satisfies the following properties:
\begin{enumerate}
\item (FKG inequality) Fix $p\in[0,1]$, $q\ge 1$ and some boundary condition $\xi$. 
  Let $A$ and $B$ two increasing events, then $\phi_{p,q,G}^\xi(A\cap B)\ge\phi^\xi_{p,q,G}(A)\phi^\xi_{p,q,G}(B)$.
\item (comparison between boundary conditions) Fix $p\in[0,1]$, $q\ge 1$ and $\xi$ and $\psi$ two boundary conditions. 
  Assume that $\xi\le \psi$, meaning that the partition $\psi$ is coarser than $\xi$ (there are more wirings in $\psi$ than in $\xi$), 
  then for any increasing event $A$, $\phi_{p,q,G}^\xi(A) \le \phi_{p,q,G}^\psi(A)$.
\item (comparison between different edge parameters) Fix $p_1\le p_2$, $q\ge 1$ and some boundary condition $\xi$. 
  Then for any increasing event $A$, $\phi_{p_1,q,G}^\xi(A)\le \phi_{p_2,q,G}^\xi(A)$.
\end{enumerate}

\paragraph{Infinite-volume measures for $q\ge1$.}
We will consider measures on infinite-volume configurations, i.e. on $\{0,1\}^{E_\Lat}$.
Recall that for any finite subgraph $G$ of $\Lat$, a configuration $\om \in \{0,1\}^{E_\Lat}$ 
induces a boundary condition on $G$ that we will exceptionally write in this paragraph $\chi(\om)$.
Under $\chi(\om)$, two vertices $x,y \in \partial G$ are wired if and only if they are connected in $\om$ on $\Lat \setminus G$. 
An infinite-volume random-cluster measure on $\Lat$ with parameters $p$ and $q$ 
is a measure $\phi_{p,q}$ on $\{0,1\}^{E_\Lat}$ with the property that, for all finite subgraphs $G$ of $\Lat$, 
\begin{align}\label{eq:def infinite}
  \phi_{p,q} (\om _{|G}=\cdot\,  \cond \chi(\om) = \xi) = \phi_{p,q,G}^\xi(\cdot),
\end{align}
for all boundary conditions $\xi$ for which the conditioning is not degenerate.

The properties of the previous paragraph extend to infinite-volume measures by~\eqref{eq:def infinite}.

One may prove that for any pair of parameters $(p,q)$, there exists at least one such measure. 
When $q\ge1$, one may for instance take the limit of measures with wired (resp. free) boundary conditions on $\Lambda_n$. 
The measure obtained in the limit is called the infinite-volume measure with wired (resp. free) 
boundary conditions and is denoted by $\phi_{p,q}^1$ (resp. $\phi_{p,q}^0$).

In general there is no reason that, for a given pair of parameters $(p,q)$, there is a unique infinite-volume measure. 
Nevertheless, for $q\geq 1$, the set $\mathcal{D}_q$ of values of $p$ for which there exist at least two distinct infinite-volume measures 
is at most countable, see \cite[Theorem (4.60)]{Gri06}.  
This property can be combined with the stochastic ordering between different edge-weights 
to show the existence of a \emph{critical point} $p_c\in[0,1]$ 
such that:
\begin{itemize}[noitemsep,nolistsep]
\item for any infinite-volume measure with $p<p_c$, 
  there is almost surely no infinite cluster, 
\item for any infinite-volume measure with $p>p_c$, 
  there is almost surely an infinite cluster. 
\end{itemize}
When the planar, locally finite, doubly-periodic graph is non-degenerate, 
$p_c$ can be proved to be different from 0 and 1 using a variant of the classical Peierls argument.

While the above is true also for lattices in higher dimensions, 
for planar lattices such as $\Lat$ an additional argument shows that $\mathcal{D}_q \subseteq \{p_c\}$ (in fact in any dimension one has $\mathcal D_q\subseteq[p_c,1]$, see \cite[Theorem 5.16]{Gri06}).
See the discussion following Remark~\ref{rem:dual_exp_decay0} for details.  

\paragraph{Planar duality.} 

Let $G$ be a finite graph and $\xi\in\{0,1\}^{E_\Lat\setminus E_G}$. 
If $\om$ is distributed according to $\phi_{p,q,G}^\xi$, 
the configuration $\om^*$ is also distributed as a random-cluster configuration on $G^*$ with different parameters. 
More precisely, we find that
$$\phi_{p,q,G}^\xi(\omega)=\phi_{p^*,q^*,G^*}^{\xi^*}(\omega^*),$$
where 
$$\frac{pp^*}{(1-p)(1-p^*)} =q\quad\text{and}\quad q^*=q$$
and $\xi^*$ is the boundary condition on $\partial G^*$ 
induced by the dual-configuration $\xi^*\in\{0,1\}^{E_{\Lat^*}\setminus E_{G^*}}$.
For instance, dual measures extend to the whole of $\Lat^*$ and, if $\omega$ follows $\phi_{p,q, \Lat}^1$ (respectively $\phi_{p,q, \Lat}^0$), 
then $\omega^*$ is distributed as $\phi_{p^*,q, \Lat^*}^0$ (respectively $\phi_{p^*,q, \Lat^*}^1$).

\begin{remark}\label{rem:dual_exp_decay0}
	As a consequence of Theorem~\ref{thm:main}, for $q \geq 1$ and $p > p_c$, there exists $c = c(p, q) > 0$ such that
	\begin{align}\label{eq:dual_exp_decay0}
    	\phi_{p, q}(u\stackrel{*}{\longleftrightarrow}v)\le \exp(-c|u-v|), \quad \text{ for all } u,v \in V_{\Lat^*}.
	\end{align}
\end{remark}
Indeed, an adaptation of Zhang's argument (as that of \cite[Thm 6.17]{Gri06}) shows that,
for any values of $q \geq 1$ and $p \notin \calD_q$, it is impossible to have with positive probability 
infinite clusters in both $\omega$ and $\omega^*$. 
Thus, if $p > p_c$, there is no infinite cluster in $\omega^*$, 
and Theorem~\ref{thm:main} applied to the dual random-cluster model implies~\eqref{eq:dual_exp_decay0}.


\paragraph{Differential inequalities.}

The two following theorems are essential to our study. 
The first is a direct adaptation of the more general statement of Graham and Grimmett \cite[Thm. 5.3]{GraGri11}.

\begin{theorem}[\cite{GraGri11}]\label{thm:ggsh1}
	For any $q \geq 1$ there exists a constant $c > 0$ such that, 
	for any $p\in(0,1)$, any finite graph $G$, any boundary condition $\xi$ and any increasing event $A$,
	\begin{align}\label{eq:ggsh1}
    	\frac{d}{dp}\phi_{p,q,G}^\xi(A)
	  	\geq  c	\,\phi_{p,q,G}^\xi(A) ( 1 - \phi_{p,q,G}^\xi(A))
		\log \left(\frac{1}{ 2m_{A, p}}\right),
  \end{align}
  where 
  $m_{A, p} =\max_{e \in E_G} \big( \phi_{p,q,G}^\xi(A \cond \om(e) = 1) - \phi_{p,q,G}^\xi(A \cond \om(e) = 0) \big).$
\end{theorem}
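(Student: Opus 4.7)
The strategy is the one of Graham and Grimmett in \cite{GraGri11}: combine a Russo-type differential formula for the random-cluster model with a BKKKL/Talagrand-type sharp threshold inequality, the latter being extended from product measures to measures satisfying the FKG lattice condition via a monotone coupling with independent uniform variables.

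The first step is to express the derivative of $\phi_{p,q,G}^\xi(A)$ as a sum of ``influences'' over edges. Writing $\log \phi_{p,q,G}^\xi(\omega) = o(\omega)\log p + c(\omega)\log(1-p) + k(\omega^\xi)\log q - \log Z^\xi$ and differentiating in $p$, and using that $o(\omega)+c(\omega)=|E_G|$ is a constant (so its contribution cancels in the resulting covariance), one obtains
\begin{equation*}
\frac{d}{dp}\phi_{p,q,G}^\xi(A) \;=\; \frac{1}{p(1-p)} \sum_{e \in E_G} \mathrm{Cov}\bigl(\omega(e),\mathbf{1}_A\bigr),
\end{equation*}
where the covariance is taken under $\phi_{p,q,G}^\xi$. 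Conditioning on the state of $e$ and using that $A$ is increasing, a direct computation rewrites this as
\begin{equation*}
\frac{d}{dp}\phi_{p,q,G}^\xi(A) \;=\; \frac{1}{p(1-p)} \sum_{e \in E_G} \phi(\omega(e)=0)\,\phi(\omega(e)=1)\, I_e(A),
\end{equation*}
with $I_e(A) := \phi_{p,q,G}^\xi(A\mid\omega(e)=1) - \phi_{p,q,G}^\xi(A\mid\omega(e)=0) \ge 0$. By the finite-energy property, $\phi(\omega(e)=j) \in [c_0,1-c_0]$ for some $c_0=c_0(p,q)>0$, so the derivative is bounded below by a constant multiple of $\sum_{e} I_e(A)$.

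The second and main step is the sharp threshold inequality
\begin{equation*}
\sum_{e\in E_G} I_e(A) \;\ge\; c'\,\phi_{p,q,G}^\xi(A)\bigl(1-\phi_{p,q,G}^\xi(A)\bigr)\,\log\!\bigg(\frac{1}{2 m_{A,p}}\bigg).
\end{equation*}
I would prove this by Talagrand's randomisation route: using the standard monotone Markov coupling, realise $\omega$ as a deterministic, coordinatewise non-decreasing function of a family of i.i.d.\ uniform variables $(U_e)_{e\in E_G}$ on $[0,1]$, so that $\mathbf{1}_A$ becomes an increasing function on the product space $[0,1]^{E_G}$. The classical BKKKL/Talagrand inequality on this product space gives a lower bound on the sum of the $U_e$-influences in terms of the variance of $\mathbf{1}_A$ and the maximum $U_e$-influence. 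The FKG lattice condition satisfied by the random-cluster model (which guarantees monotonicity of the coupling in a usable form) then allows, on the one hand, to dominate each $U_e$-influence by the corresponding conditional influence $I_e(A)$, and on the other hand, to bound the maximal $U_e$-influence from above by $2 m_{A,p}$, whence the stated logarithm.

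Combining the two steps and absorbing $p(1-p)$ and the finite-energy constants into a single prefactor yields the claimed inequality with a constant $c=c(q)>0$ that is uniform in $G$, $\xi$, $p$ and $A$. The first step is routine. The genuine difficulty, and the main obstacle, lies in the second step: in the product case the sharp-threshold inequality is classical, but here one must transfer it to a non-product measure with long-range dependence. This transfer is precisely the technical heart of \cite{GraGri11}, and my plan is to reproduce their monotone-coupling reduction essentially verbatim, since it relies only on ingredients (FKG lattice condition, finite energy, monotone representation) that the random-cluster model satisfies for every $q\ge 1$.
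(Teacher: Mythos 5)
Your proposal matches the paper's approach exactly: the paper does not prove this theorem itself but cites \cite[Thm.\ 5.3]{GraGri11} — whose proof is precisely the Russo-type differential formula combined with a BKKKL/Talagrand sharp-threshold inequality transferred to monotonic (FKG) measures via a monotone coupling with i.i.d.\ uniforms — together with the explicit finite-energy bound $p/q \le \phi_{p,q,G}^\xi(\omega(e)=1 \mid \omega(f),\,f\neq e) \le p$, which is what lets the constant be taken as $c=c(q)$, uniform in $p$ (note that for this uniformity one should use the ratio $p_e(1-p_e)/[p(1-p)] \ge 1/q$ directly rather than a bound $\phi(\omega(e)=1)\in[c_0,1-c_0]$ with $c_0$ depending on $p$). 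Your outline faithfully reproduces the Graham--Grimmett argument that the paper leaves to the reference.
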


The original result concerns  a more general class of measures than that of the random-cluster model, 
hence the slightly more complicated statement of \cite[Thm. 5.3]{GraGri11}. 
The above formulation is easily deduced using an explicit bound for the finite energy property of the random-cluster model:
\begin{align*}
 \frac{p}{q} \leq \phi_{p,q,G}^\xi\big(\om(e) =1 \cond \om(f),  f \neq e \big) \leq p, 
 \qquad \text{ for all } G,e,\xi, p \text{ and } q \geq1.&
\end{align*}
  
%
%
In order to state the second result, we introduce the notion of Hamming distance. 
For an event $A$ and a configuration $\om$, define $H_A(\om)$ as the graph distance in the hypercube $\{0,1\}^{E_G}$ 
(or {\em Hamming distance}) 
between $\om$ and the set $A$. 
When $A$ is increasing, it corresponds to the minimal number of edges that need to be turned to open in order to go from $\om$ to $A$.
The following may be found in \cite[Thm.~2.53]{Gri06} or \cite{GriPiz97}.
\begin{theorem}[\cite{GriPiz97}]\label{thm:hamming}
  For any $q\ge 1$, any $p\in(0,1)$, any finite graph $G$ and boundary condition $\xi$, we have that 
  for any increasing event $A$,
  \begin{align}\label{eq:hamming}
    \frac{d}{dp}\log(\phi_{p,q,G}^\xi(A))\ge\frac{\phi_{p,q,G}^\xi(H_A)}{p(1-p)}.  
  \end{align}
\end{theorem}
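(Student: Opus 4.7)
The plan is to express $\frac{d}{dp}\phi_{p,q,G}^\xi(A)$ as a covariance, and then invoke FKG with a cleverly chosen increasing observable.

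A direct differentiation of the weights $p^{o(\omega)}(1-p)^{c(\omega)}q^{k(\omega^\xi)}/Z^\xi(p,q,G)$ gives
\begin{equation*}
\frac{d}{dp}\phi_{p,q,G}^\xi(A) \;=\; \frac{1}{p(1-p)}\,\bigl[\phi_{p,q,G}^\xi(\ind_A\,o) - \phi_{p,q,G}^\xi(A)\,\phi_{p,q,G}^\xi(o)\bigr],
\end{equation*}
since $\frac{d}{dp}\log\bigl(p^{o(\omega)}(1-p)^{c(\omega)}\bigr) = \sum_{e}\tfrac{\omega(e)-p}{p(1-p)}$, and the correction coming from $\frac{d}{dp}\log Z^\xi(p,q,G) = \phi_{p,q,G}^\xi\bigl[\sum_e\tfrac{\omega(e)-p}{p(1-p)}\bigr]$ is exactly what produces the covariance on the right-hand side.

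The key observation is that the random variable $f(\omega) := o(\omega) + H_A(\omega)$ is an \emph{increasing} function on $\{0,1\}^{E_G}$. Indeed, for $\omega \leq \omega'$ set $D = \{e \in E_G : \omega'(e) = 1,\, \omega(e) = 0\}$, so that $o(\omega') - o(\omega) = |D|$. Any $\omega'' \in A$ with $\omega'' \geq \omega'$ realising the minimum $H_A(\omega')$ also satisfies $\omega'' \geq \omega$ and $|\omega''\setminus\omega| \leq |D| + H_A(\omega')$; hence $H_A(\omega) \leq |D| + H_A(\omega')$. Rearranging gives $f(\omega') - f(\omega) = |D| - \bigl(H_A(\omega) - H_A(\omega')\bigr) \geq 0$.

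For $q \geq 1$, the FKG inequality for the random-cluster measure, extended from increasing events to increasing functions in the standard way, applied to the increasing event $A$ and increasing function $f$ yields
\begin{equation*}
\phi_{p,q,G}^\xi(\ind_A\,f) \;\geq\; \phi_{p,q,G}^\xi(A)\,\phi_{p,q,G}^\xi(f).
\end{equation*}
Since $H_A \equiv 0$ on $A$, the left-hand side equals $\phi_{p,q,G}^\xi(\ind_A\,o)$, while $\phi_{p,q,G}^\xi(f) = \phi_{p,q,G}^\xi(o) + \phi_{p,q,G}^\xi(H_A)$. Combining this with the derivative identity above and dividing by $\phi_{p,q,G}^\xi(A)$ yields \eqref{eq:hamming}. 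The only non-routine step — and the main obstacle — is recognising that $o + H_A$ is monotone; this subadditivity-like property of $H_A$ is the heart of the argument, and once it is in hand the conclusion follows from a direct application of FKG.
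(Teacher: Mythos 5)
Your proof is correct, and it is essentially the standard argument for this inequality: the paper itself states the result without proof, citing \cite{GriPiz97} and \cite[Thm.~2.53]{Gri06}, and the proof given there is precisely your route — the covariance formula for $\tfrac{d}{dp}\phi_{p,q,G}^\xi(A)$, the monotonicity of $o+H_A$, and positive association applied to $\ind_A$ and $o+H_A$. The one point worth flagging explicitly (which you handle correctly) is that $H_A(\omega')$ is realised by some $\omega''\ge\omega'$ because $A$ is increasing, and that FKG must be used in its functional form for increasing random variables, not just events.
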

In the above $\phi_{p,q,G}^\xi(H_A)$ is the expectation of $H_A$ under $\phi_{p,q,G}^\xi$.

\begin{remark}
  In this article~\eqref{eq:hamming} will be used in its integrated form. 
  Consider two values $p'<p$ and an increasing event $A$. 
  Since $H_A$ is a decreasing function, by integrating~\eqref{eq:hamming} between $p'$ and $p$ we find
  \begin{align}\label{eq:hamming_int}
  	\phi_{p',q,G}^\xi(A)\le \phi_{p,q,G}^\xi(A)\exp\big[-4(p - p')\phi_{p,q,G}^\xi(H_A)\big].
  \end{align}	
  Now, consider an event $A$ depending on a finite set of edges $E$ and assume that the infinite-volume measures at $p'$ and $p$ are unique. 
  By taking $\xi=1$ and taking the limit in~\eqref{eq:hamming_int} as $G$ tends to $\Lat$ (both sides of the inequality converge) we obtain 
  \begin{align}\label{eq:hamming_inf}
 	\phi_{p',q}(A)\le \phi_{p,q}(A)\exp\big[-4(p - p')\phi_{p,q}(H_A)\big].
  \end{align}
\end{remark}
\phantom{. }
\medbreak

{\bf From now on, we fix $q\ge1$ and $\calG$. For ease, we drop them from the notation. We will frequently work with infinite-volume measures for different values of $p$ and will always assume that these values are not in $\mathcal D_q$. In such case, $\phi_p$ means the unique infinite-volume measure with parameter $p$.}

\begin{remark} Since $\mathcal D_q$ is countable, the different claims could be easily extended to values of $p$ in $\mathcal D_q$ by density ($\phi_p$ simply denotes any infinite-volume measure in this case). Also note that we are mainly interested in $p<p_c$ for which $p\notin \mathcal D_q$ anyway (we prefer to state the claims in full generality since they may be of some use in other contexts).\end{remark}

\section{Crossings in the easy direction}\label{sec:proof_RSW}

The goal of this section is to prove the following result, which corresponds to Step 1.

\begin{proposition}\label{prop:easy}
  If $p_0\in(0,1)$ is such that there exists an infinite-volume measure $\phi_{p_0}$ with 
  $$\liminf_{n\rightarrow \infty}\phi_{p_0}(\calC_v(2n,n))=0,$$
  then for any $p<p_0$ there exists $c=c(p)>0$ such that for any $x,y\in\calG$
  \begin{align*}
    \phi_{p}(x\longleftrightarrow y)\le \exp(-c|x-y|).
  \end{align*}
\end{proposition}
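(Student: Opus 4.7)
The plan is to apply the integrated Hamming inequality~\eqref{eq:hamming_inf} to the vertical crossing event of a narrow, tall rectangle, and then to convert narrow-rectangle exponential decay into exponential decay of the two-point function using the $\pi/2$-rotational symmetry of $\Lat$. Fix $p<p_0$, set $\ep:=p_0-p>0$, and use the hypothesis to pick $n_0$ with $\delta:=\phi_{p_0}(\calC_v(2n_0,n_0))<1/4$. For every integer $k\ge 1$, consider the narrow tall rectangle $R_k:=[0,2n_0]\times[0,kn_0]$ and split it into $k$ edge-disjoint horizontal sub-rectangles $R_k^{(j)}:=[0,2n_0]\times[jn_0,(j+1)n_0]$, $j=0,\dots,k-1$, each a translate of $[0,2n_0]\times[0,n_0]$.

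The core estimate is a lower bound on $\phi_{p_0}(H_{\calC_v(R_k)})$. Any primal-open vertical crossing of $R_k$ restricts to a primal-open vertical crossing of each strip $R_k^{(j)}$; by planar duality applied in the finite rectangle $R_k^{(j)}$, whenever $\calC_v(R_k^{(j)})^c$ occurs, that strip contains a dual-open horizontal path that blocks every primal vertical crossing of the strip, so at least one closed primal edge inside $R_k^{(j)}$ must be flipped open in order to create a primal vertical crossing of $R_k$. Since the strips are edge-disjoint, the corresponding edge-flips are distinct, giving the pointwise bound
\[
H_{\calC_v(R_k)}\ \ge\ \sum_{j=0}^{k-1}\ind\bigl\{\calC_v(R_k^{(j)})^c\bigr\}.
\]
Taking $\phi_{p_0}$-expectations and using translation invariance of $\Lat$ yields $\phi_{p_0}(H_{\calC_v(R_k)})\ge k(1-\delta)$. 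Plugging this into~\eqref{eq:hamming_inf} with $A=\calC_v(R_k)$ and using $\phi_{p_0}(\calC_v(R_k))\le\delta$ gives
\[
\phi_p(\calC_v(R_k))\ \le\ \delta\,\exp\bigl(-4\ep(1-\delta)\,k\bigr),
\]
so narrow vertical crossings decay exponentially in their height. By the $\pi/2$-rotational symmetry of $\Lat$, the same estimate holds with $\calC_v$ replaced by $\calC_h$ and the rectangle rotated, giving exponential decay for narrow horizontal crossings of $[0,kn_0]\times[0,2n_0]$.

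To conclude exponential decay of $\phi_p(x\lra y)$, note that by translation invariance it suffices to bound $\phi_p(0\lra\partial\Lambda_M)$. Any open connection from $0$ to $\partial\Lambda_M$ exits $\Lambda_M$ through one of its four sides, reaching (WLOG) a point $(M,y_*)$ with $|y_*|\le M$. A standard decomposition argument combined with a union bound over the $O(M/n_0)$ possible ``heights'' $y_*$ (discretized at scale $n_0$) reduces this to the event that some translate of a narrow rectangle $[0,M]\times[\,\cdot\,,\,\cdot\,]$ of height $2n_0$ is crossed horizontally in its hard direction; by the bound above this occurs with probability at most $O(M)\,e^{-cM/n_0}$, still exponentially small in $M$.

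The main obstacle I anticipate is not the Hamming lower bound itself, which is clean once one verifies the duality-in-a-finite-strip step, but rather this last reduction: in the absence of a BK inequality for general random-cluster measures with $q\ge1$, deducing exponential decay of the two-point function from narrow-strip decay is non-trivial and requires carefully combining translation invariance, FKG, and possibly an iteration of the Hamming inequality to absorb the polynomial union-bound factors; it is here that the rotational symmetry of $\Lat$ plays a key role by giving us control over narrow crossings in both coordinate directions simultaneously.
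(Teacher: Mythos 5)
Your first step contains a genuinely nice idea: by stacking edge-disjoint translates of the reference rectangle $[0,2n_0]\times[0,n_0]$ vertically, you obtain the pointwise Hamming bound $H_{\calC_v(R_k)}\ge\sum_j\ind\{\calC_v(R_k^{(j)})^c\}$, hence $\phi_{p_0}(H_{\calC_v(R_k)})\ge k(1-\delta)$, and~\eqref{eq:hamming_inf} gives \emph{exponential} decay in $k$ in one shot. This is cleaner than the paper's Lemma~\ref{lem:easy}, whose Hamming lower bound carries the factor $(1-\delta)^{2N/n}$ (coming from the covering inequality~\eqref{eq.covering} across a strip of width $2N$), which degrades with the aspect ratio and forces the iterative bootstrapping scheme of the paper. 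The reason the paper cannot use your trick directly is that you pay for the sharper Hamming bound by only controlling a \emph{narrow tall} rectangle $[0,2n_0]\times[0,kn_0]$, i.e. a crossing in the hard direction of fixed width, whereas the paper needs a bound on $\phi_{p}(\calC_v(2N,N))$, the easy-direction crossing of a wide $2N\times N$ rectangle; the two quantities are not comparable.

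This is precisely where your argument has a genuine gap. The claim that $\{0\lra\partial\Lambda_M\}$ reduces, via a union bound over $O(M/n_0)$ heights, to a hard-direction crossing of some translate of a $[0,M]\times[\cdot,\cdot]$ rectangle of height $2n_0$ is false: an open path from $0$ to $(M,y_*)$ is free to wander vertically throughout $\Lambda_M$, so the event $\{0\lra\{M\}\times\bbR\}$ is \emph{not} contained in the union of the events ``$[0,M]\times[2jn_0,2(j+1)n_0]$ is crossed horizontally''. To force the path into a narrow strip one would need some form of disjoint-occurrence control (BK), which is unavailable for $q>1$; alternatively one could iterate a Hamming inequality for the event $\{0\lra\partial\Lambda_M\}$ itself, counting disjoint dual circuits around the origin, but that is not a union bound and is essentially the content of the paper's Lemma~\ref{lemma:exp_decay}. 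You acknowledge the obstacle but stop short of resolving it. The paper's proof sidesteps it entirely: it proves only a \emph{polynomial} bound on $\phi_{p_0-\ep}(\calC_v(2N,N))$ (by iterating Lemma~\ref{lem:easy} over a carefully chosen sequence $(p_k,n_k,\delta_k)$), then passes to $\phi_{p_0-\ep}(0\lra\partial\Lambda_N)$ by observing that a path from $0$ to $\partial\Lambda_N$ yields an easy-direction crossing of a square, deduces $\phi_{p_0-\ep}(|C_0|^5)<\infty$, and finally upgrades this to exponential decay for $p<p_0-\ep$ via Lemma~\ref{lemma:exp_decay} (another differential-inequality argument, this time applied directly to the cluster-size tail). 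Without an analogue of this last step, your proof does not close. (As a minor point, the strips $R_k^{(j)}$ and $R_k^{(j+1)}$ may share edges lying on the line $y=(j+1)n_0$, so the claimed edge-disjointness needs a small adjustment; this is cosmetic and easy to fix.)
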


\begin{remark}\label{rem:step1d>2}
  This proposition can be proved in any dimension $d\ge2$. 
  The claim should be adapted as follows: if the liminf of probabilities of crossing sets of the form $[0,2n]^{d-1}\times[0,n]$ 
  from $[0,2n]^{d-1}\times\{0\}$ to $[0,2n]^{d-1}\times\{n\}$ is equal to 0 for some edge-weight $p_0$, 
  then there is exponential decay for any $p<p_0$ (i.e.~\eqref{eq:exp_decay0} holds for $p<p_0$).  
\end{remark}

The proof of the proposition is based on the following two lemmas. Let $C_x$ be the cluster of the site $x$. 
For simplicity we will henceforth assume $0 \in V_{\Lat}$.

\begin{lemma}\label{lemma:exp_decay}
  Let $p_0>0$. If there exists an infinite-volume measure $\phi_{p_0}$ and $\kappa>0$ such that 
  $\phi_{p_0}(|C_0|^{4+\kappa})<\infty$, then for any $p<p_0$, there exists $c=c(p)>0$ such that for any $n\ge0$,
  \begin{align}\label{eq:exp_decay}
    \phi_{p}(0\longleftrightarrow \partial\Lambda_n)\le \exp(-c n).
  \end{align}
\end{lemma}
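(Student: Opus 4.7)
The plan is to apply the integrated Hamming inequality~\eqref{eq:hamming_inf} to the increasing event $A_n := \{0 \lra \partial \Lambda_n\}$: for $p < p_0$,
\begin{equation*}
\phi_p(A_n) \leq \phi_{p_0}(A_n)\, \exp\!\big[-4(p_0-p)\,\phi_{p_0}(H_{A_n})\big],
\end{equation*}
so it suffices to prove a linear lower bound $\phi_{p_0}(H_{A_n}) \geq c\,n$. The key geometric input is the planar min-cut/max-flow correspondence: $H_{A_n}(\omega)$ equals the maximum number of edge-disjoint dual circuits around $0$ contained in $\Lambda_n^*$, and is therefore bounded below by the number of disjoint annuli $\Lambda_{r+L}\setminus\Lambda_r$ inside $\Lambda_n$ carrying a dual circuit around $0$.

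For a given annulus $\Lambda_{r+L}\setminus\Lambda_r$, the complementary event entails a primal crossing of the annulus. A union bound over the $O(r)$ inner-boundary vertices combined with translation invariance of $\phi_{p_0}$ bounds the probability of such a crossing by $C r\,\phi_{p_0}(0 \lra \partial\Lambda_L)$; Markov's inequality together with $\phi_{p_0}(|C_0|^{4+\kappa})<\infty$ yields
\begin{equation*}
\phi_{p_0}(0 \lra \partial\Lambda_L) \leq \phi_{p_0}(|C_0| \geq L/K) \leq C L^{-(4+\kappa)},
\end{equation*}
where $K$ bounds the edge-lengths in $\calG$. Taking widths $L(r) \sim r^{1/(4+\kappa)}$ then ensures that each annulus contains a dual circuit with probability at least $1/2$, and summing over a disjoint collection of such annuli up to radius $n$ gives $\phi_{p_0}(H_{A_n}) \geq c\,n^{(3+\kappa)/(4+\kappa)}$. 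Substituted in~\eqref{eq:hamming_inf}, this produces only a \emph{stretched-exponential} bound $\phi_p(A_n) \leq e^{-c n^\alpha}$ with $\alpha = (3+\kappa)/(4+\kappa) < 1$ for every $p<p_0$.

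To upgrade to genuine exponential decay, I iterate. Fix a finite decreasing sequence $p_0 > p_1 > \cdots > p_N > p$; the stretched-exponential bound at $p_1$ yields stretched-exponential tail estimates for $\phi_{p_1}(0 \lra \partial \Lambda_L)$, which permit choosing annulus widths of order only $\mathrm{polylog}(r)$ in the circuit-counting argument. This produces a lower bound of order $n/\mathrm{polylog}(n)$ on $\phi_{p_1}(H_{A_n})$, which combined again with~\eqref{eq:hamming_inf} improves the decay at $p_2$, and so on. Repeating finitely many times drives the required annulus widths down to $O(1)$, at which stage $\phi_{p_N}(H_{A_n}) \geq c\,n$, and a final application of~\eqref{eq:hamming_inf} from $p_N$ to $p$ produces the desired $\phi_p(A_n) \leq \exp(-c n)$.

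The main obstacle is this last bootstrap: each iteration only improves the decay rate from $n^\alpha$ to $n/\mathrm{polylog}(n)$, then to $n/\mathrm{polyloglog}(n)$, and so on, while consuming a fraction of the parameter window $(p,p_0)$. Forcing the scheme to close with a truly linear lower bound on $\phi_{p_N}(H_{A_n})$ in only finitely many iterations requires careful quantitative tracking of the tail estimates at each intermediate parameter and a concrete choice of the schedule $(p_k)$.
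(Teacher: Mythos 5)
Your reduction via~\eqref{eq:hamming_inf} to lower-bounding $\phi_{p_0}(H_{A_n})$ for $A_n=\{0\lra\partial\Lambda_n\}$ is the right starting point, and the first half of the argument is sound: the annulus decomposition together with Markov's inequality on $|C_0|^{4+\kappa}$ does yield $\phi_{p_0}(H_{A_n})\gtrsim n^{(3+\kappa)/(4+\kappa)}$ and hence a stretched-exponential bound $\phi_p(A_n)\le\exp(-cn^{(3+\kappa)/(4+\kappa)})$ for $p<p_0$. The gap is in the bootstrap, and it is not a matter of ``careful quantitative tracking'' or a clever schedule $(p_k)$: the scheme is structurally incapable of producing a linear lower bound on $\phi(H_{A_n})$, so it cannot close.

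The obstruction is the union bound over the $O(r)$ inner-boundary vertices of the annulus at radius $r$. Even if one has already established a genuine exponential tail $\phi_{p_k}(0\lra\partial\Lambda_L)\le e^{-cL}$ at some intermediate parameter, the requirement $Cr\,e^{-cL(r)}\le 1/2$ for all $r\le n$ forces $L(r)\gtrsim\log r$; your assertion that finitely many iterations drive the widths to $O(1)$ is therefore false --- one would need $\phi_{p_k}(0\lra\partial\Lambda_L)\le 1/(2Cn)$ with $L$ bounded, which is impossible since the left side is a positive constant independent of $n$. Consequently the number of disjoint annuli in $\Lambda_n$ is at most of order $n/\log n$, the Hamming lower bound caps at $n/\log n$, and~\eqref{eq:hamming_inf} gives only $\exp(-cn/\log n)$. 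Further iterations replace $\log n$ by $\log n\cdot\log\log n$ and so on, but after any finite number of steps the exponent stays $o(n)$. Fundamentally, the annulus-circuit count is a strict underestimate of $H_{A_n}$: in the regime you are heading towards, there are $\Theta(n)$ edge-disjoint dual circuits, but most of them sit in annuli of bounded width that your union bound cannot certify; the lost logarithm is never recovered.

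The paper does not prove this lemma --- it cites it as classical from Grimmett--Piza and \cite[Thm.~5.64]{Gri06}, where~\eqref{eq:hamming} is combined with a combinatorial argument that counts over sites rather than over geometrically disjoint annuli and thus produces $\phi_{p_0}(H_{A_n})\ge cn$ directly from the cluster-size moment, without paying the $\log r$ price of a geometric union bound. To repair your proof you would either need to reproduce such a site-level bound on the Hamming expectation, or find a separate multiplicativity mechanism (a Simon--Lieb-type inequality valid for random-cluster measures, say) to upgrade your stretched-exponential decay to true exponential decay; iterating the annulus construction alone does not suffice.
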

It is easy to see that~\eqref{eq:exp_decay} is equivalent to exponential decay, as defined in~\eqref{eq:exp_decay0}.
The previous lemma is classical, see \cite{GriPiz97} or \cite[Thm. 5.64]{Gri06}.
We only mention that its proof is based on the differential inequality~\eqref{eq:hamming}.

\begin{lemma}\label{lem:easy}
  Let $p>p'$. For any $N \ge n$, 
  $$
  \phi_{p'}(\calC_v(2N,N)) 
  \le \exp\left[- (p-p')  \tfrac{N}{n} \Big( 1-\phi_{p}(\calC_v(2n,n))\Big)^{2N/n}\right].
  $$
\end{lemma}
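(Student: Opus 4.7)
The strategy is to combine the integrated Hamming-distance inequality~\eqref{eq:hamming_inf} with a lower bound on $\phi_p(H_A)$ for $A := \calC_v(2N,N)$. Since $A$ depends only on the finite edge-set of $[0,2N]\times[0,N]$, \eqref{eq:hamming_inf} yields
\begin{equation*}
	\phi_{p'}(A) \;\le\; \phi_p(A)\exp\bigl[-4(p-p')\,\phi_p(H_A)\bigr] \;\le\; \exp\bigl[-4(p-p')\,\phi_p(H_A)\bigr],
\end{equation*}
so the lemma reduces to the estimate $\phi_p(H_A) \ge \tfrac{N}{4n}(1-\alpha_n)^{2N/n}$ with $\alpha_n := \phi_p(\calC_v(2n,n))$; the factor $1/4$ is absorbed by the $4$ in the exponent of~\eqref{eq:hamming_inf}.

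To obtain such a bound, I would split $[0,2N]\times[0,N]$ into the $N/n$ horizontal slabs $S_k := [0,2N]\times[(k-1)n,kn]$ (assuming $n\mid N$ for simplicity) and set $V_k := \calC_v(S_k)$. Any vertical crossing of the large rectangle restricts to a vertical crossing of every $S_k$, and since the slabs have (essentially) pairwise disjoint edge-sets, at least one extra edge must be opened inside $S_k$ whenever $V_k^c$ occurs in order to turn the configuration into one realising $A$. This yields the pointwise bound
\begin{equation*}
	H_A(\omega) \;\ge\; \sum_{k=1}^{N/n} \ind_{V_k^c}(\omega),
\end{equation*}
and by translation invariance of $\phi_p$ the problem reduces to showing $\phi_p(V_1^c) \ge (1-\alpha_n)^{2N/n}$.

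The natural approach is to cover $S_1$ by $2N/n$ translates $R_{1,j}$ of $[0,2n]\times[0,n]$ and apply FKG to the decreasing events $\calC_v(R_{1,j})^c$, which gives
\begin{equation*}
	\phi_p\Bigl(\bigcap_{j}\calC_v(R_{1,j})^c\Bigr) \;\ge\; \prod_{j}\phi_p\bigl(\calC_v(R_{1,j})^c\bigr) \;=\; (1-\alpha_n)^{2N/n}.
\end{equation*}
The main obstacle is that the inclusion $V_1^c \supseteq \bigcap_{j}\calC_v(R_{1,j})^c$ can fail, because a vertical crossing of the wide slab may ``zigzag'' across several sub-rectangles while not being contained in any single one. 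The clean resolution is to argue in the dual: $V_1^c$ is the existence of a dual horizontal crossing of $S_1$, and such a crossing can be produced by glueing dual horizontal crossings of the $2N/n$ (dual) sub-rectangles---events that are again decreasing in the primal, so FKG still applies. Completing this step yields the required lower bound on $\phi_p(V_1^c)$, and combining with the Hamming estimate above gives the lemma.
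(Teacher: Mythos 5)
Your overall architecture is exactly the paper's: reduce the lemma to a lower bound on $\phi_p(H_{\calC_v(2N,N)})$ via the integrated inequality~\eqref{eq:hamming_inf}, split $[0,2N]\times[0,N]$ into $\lfloor N/n\rfloor$ edge-disjoint horizontal slabs so that $H_A\ge\sum_k \ind_{V_k^c}$, and bound $\phi_p(V_1^c)$ from below by an FKG product over $O(N/n)$ decreasing events, each of probability $1-\alpha_n$ with $\alpha_n:=\phi_p(\calC_v(2n,n))$. You also correctly isolated the one delicate point: a vertical crossing of the wide, thin slab need not lie in any single $2n\times n$ sub-rectangle.

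Your resolution of that point, however, has a genuine gap. Passing to the dual does not dissolve the zigzag problem, it merely relocates it: two dual horizontal crossings of the overlapping rectangles $[jn,(j+2)n]\times[0,n]$ and $[(j+1)n,(j+3)n]\times[0,n]$ need not intersect, so the intersection $\bigcap_j\{\text{dual horizontal crossing of }R_{1,j}\}$ does not yield a dual horizontal crossing of $S_1$ and hence does not imply $V_1^c$. To glue consecutive dual crossings you must additionally impose dual \emph{vertical} crossings of the $n\times n$ overlap squares. Equivalently---and this is how the paper argues, staying entirely in the primal---any vertical crossing of $[0,2N]\times[0,n]$ contains either a vertical crossing of some rectangle $[kn,(k+2)n]\times[0,n]$ or a horizontal crossing of some square $[kn,(k+1)n]\times[0,n]$; each of these increasing events has probability at most $\alpha_n$ (for the squares this uses the $\tfrac\pi2$-rotation invariance of the lattice: a horizontal crossing of an $n\times n$ square becomes, after rotation, a vertical crossing of a square and hence of a $2n\times n$ rectangle), and FKG applied to their complements gives $1-\phi_p(\calC_v(2N,n))\ge(1-\alpha_n)^{2N/n}$. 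With that correction your argument coincides with the paper's proof; note only that honestly counting the overlap events changes the exponent $2N/n$ by a constant factor, which is harmless for the application in Proposition~\ref{prop:easy} but means one should either keep the primal covering or accept a slightly weaker exponent.
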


\begin{proof}
  Consider the event $\calC_v(2N,n)$. 
  Any vertical open crossing of $[0,2N] \times [0,n]$ contains at least one of the following:
  \begin{itemize}[nolistsep,noitemsep]
  \item a vertical crossing of a rectangle $[kn,(k+2)n] \times [0,n]$, for some $0\le k<\lfloor N/n \rfloor$,
  \item a horizontal crossing of a square $[kn,(k+1)n] \times [0,n]$, for some $0\le k<\lfloor N/n \rfloor$.
  \end{itemize}
  All the events above have probability bounded from below by $\phi_{p}(\calC_v(2n,n))$. 
  Using the FKG inequality for the complements of these events, we obtain
  \begin{align}\label{eq.covering}
    1-\phi_{p}(\calC_v(2N,n))\ge \big(1-\phi_{p}(\calC_v(2n,n))\big)^{2N/n}.
  \end{align}
  As a consequence, we deduce that 
  $$\phi_{p}(H_{\calC_v(2N,n)})\ge \phi_{p}(H_{\calC_v(2N,n)}\ge1)=1-\phi_{p}(\calC_v(2N,n))\ge \big(1-\phi_{p}(\calC_v(2n,n))\big)^{2N/n}.$$
  Since $\calC_v(2N,N)$ is included in the intersection of $\lfloor \tfrac{N}{n} \rfloor$ translates of $\calC_v(2N,n)$, it follows that
  $$\phi_{p}\left(H_{\calC_v(2N,N)}\right)\ge \floor[\Big]{ \tfrac{N}{n} } \Big(1-\phi_{p}(\calC_v(2n,n))\Big)^{2N/n}.$$
  By~\eqref{eq:hamming_inf} for $p' < p$ we find the result 
  (we have ignored the integer parts in the lemma since $\tfrac1{p(1-p)}\lfloor N/n\rfloor \ge N/n$). 
\end{proof}

The idea of the proof of Proposition~\ref{prop:easy} goes as follows. 
Assuming that $\phi_{p}(\calC_v(2n,n))$ is small for some $n$, we apply Lemma~\ref{lem:easy} repeatedly, 
and obtain a bound on the decay of $\phi_{p-\eps}(\calC_v(2N,N))$ as $N$ increases. A bound on the moments of $|C_0|$ follows. 

\begin{proof}[Proposition~\ref{prop:easy}]  
  Fix some $\ep>0$ and $p_0 > \ep$.
  Let $\alpha > 2$ be a (large) constant, we will see later how to choose it. 
  (We prefer not to give an explicit value for $\alpha$ now, though the requirements for it are universal.)
  Consider a small constant $\delta_0 > 0$, we will see in the proof how to choose $\delta_0$ 
  (its value only depends on $\alpha$ and $\eps$). 
  Assume that there exists a positive integer $n_0$ such that 
  $\phi_{p_0}(\calC_v(2n_0,n_0)) \leq \delta_0^\alpha$ and define recursively, for $k\geq 0$,
  \begin{align*}
    &\delta_{k+1}=\delta_k^2,\\
    &n_{k+1}=n_{k}/\delta_k^2,\\
    &p_{k+1}=p_k-\delta_k.
  \end{align*}
  Assuming $\delta_0$ is sufficiently small, 
  the inequality $\phi_{p_k}(\calC_v(2n_k,n_k))\le \delta_k^\alpha$ and Lemma~\ref{lem:easy} imply
  \begin{align*}
    \phi_{p_{k+1}}(\calC_v(2n_{k+1},n_{k+1}))
    & \le \exp\left(-(p_k-p_{k+1})\frac{n_{k+1}}{n_k}\big(1- \phi_{p_k}(\calC_v(2n_k,n_k)) \big)^{2n_{k+1}/n_k}\right)\\
    & \le \exp\Big(- \frac{(1-\delta_k^\alpha)^{2 \delta_k^{-2}}}{\delta_k}\Big) 
    \le \delta_k^{2\alpha}
    = \delta_{k+1}^{\alpha}.
  \end{align*}
  Further assume that $\delta_0$ is chosen small enough that $\lim_{k\rightarrow \infty}p_k\ge p_0-\ep$. 
  We deduce that for any $k\ge 0$,
  \begin{align}
    \phi_{p_0-\ep}(\calC_v(2n_k,n_k))\le \delta_k^\alpha. 
  \end{align}
  Let us extend the previous bound to values of $N$ different from the $\{n_k : k\ge0\}$. 
  For $n_k\le N < n_{k+1}$, by~\eqref{eq.covering} or by a simple union bound, 
  \begin{align*}
    \phi_{p_0-\ep}(\calC_v(2N,N)) 
    &\le \phi_{p_0-\ep}(\calC_v(2N,n_k)) 
    \le \frac{2 n_{k+1}}{n_k} \delta_k^\alpha 
    \le 2\left(\frac{n_0}{N}\right)^{\frac{\alpha-2}{4}}.
  \end{align*}
  In the last inequality we have used that $\delta_i=\delta_k^{1/2^{k-i}}$ for any $i\le k$, 
  and therefore $$\delta_k^4\le \prod_{i=0}^k\delta_i^2= \frac{n_0}{n_{k+1}}\le \frac{n_0}N.$$ 
  
  It easily follows that
  $\phi_{p_0-\ep}(0\leftrightarrow \partial\Ball_N) \le 8\left(\frac{n_0}{N}\right)^{\frac{\alpha-2}4}$ 
  for any $N$.
  Since a cluster of cardinality larger than $N$ has diameter at least a constant times $\sqrt N$, 
  we find easily that $\phi_{p-\ep}(|C_0|^5)<\infty$ provided that $\alpha$ is chosen large enough
  ($\alpha >42$ suffices).
  By Lemma~\ref{lemma:exp_decay}, the above implies that for any $p<p_0-\ep$, there exists $c=c(p)>0$ such that for any $n\ge0$,
  $$\phi_p(0\longleftrightarrow\partial\Lambda_n)\le \exp(-cn).$$
  Now, if $\liminf\phi_{p_0}(\calC_v(2n,n))=0,$ then for any $\ep>0$, there exists $n_0$ such that 
  $\phi_{p_0}(\calC_v(2n_0,n_0)) \leq \delta_0^\alpha$, where $\alpha$ and $\delta_0=\delta_0(\ep, \alpha)$ are chosen as above. 
  By the argument above $\phi_{p}$ exhibits exponential decay for any $p<p_0$.
\end{proof}

\section{Crossing probabilities in the hard direction}  \label{sec:proof_RSW2}

The object of this section is the following result.
\begin{proposition}\label{prop:RSW}
  If $p\in(0,1)$ is such that there exists an infinite-volume measure $\phi_{p}$ with 
  \begin{align}\label{eq:RSW}
    \liminf_{n\rightarrow \infty}\phi_{p}(\calC_v(2n,n))>0,
  \end{align}
  then for any $p_0>p$,
 \begin{align*}
    \liminf_{n\rightarrow \infty}\phi_{p_0}(\calC_h(2n,n))>0.
  \end{align*}
\end{proposition}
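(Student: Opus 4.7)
I would transfer the easy-direction information at $p$ to a hard-direction conclusion at any $p_0>p$ in three steps: a symmetry bootstrap, a classical RSW-style gluing, and a Hamming-distance upgrade using the slack $p_0-p>0$.

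\textit{Step 1 (symmetry bootstrap).} The $\pi/2$-rotation invariance of $\Lat$ turns the hypothesis~\eqref{eq:RSW} into the symmetric assertion that $\liminf_n \phi_p(\calC_h(n,2n))>0$ as well, so along a common subsequence both orientations of easy-direction $2n\times n$ crossings have uniformly positive probability. A short FKG argument on complements, combining $\calC_v([0,n]\times[0,n])$ and $\calC_v([n,2n]\times[0,n])$, then yields $\liminf_n \phi_p(\calC_v(n,n))>0$, i.e. positive liminf for square crossings.

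\textit{Step 2 (polynomial RSW at $p$).} Using the vertical reflection symmetry of $\Lat$ with respect to the axes $\{x=k\}$, $k\in\bbZ$, I would run a classical RSW-style gluing: condition on a top-most horizontal crossing of $[0,n]\times[0,n]$ (which exists with positive probability by Step~1), reflect it across $\{x=n\}$ to produce a horizontal crossing of $[n,2n]\times[0,n]$, and link the two pieces through a vertical easy-direction crossing of a narrow strip about $\{n\}\times[0,n]$, using FKG and the domain Markov property. For $q>1$ the conditioning on an extremal crossing induces non-free boundary conditions on the side below; handling this via the comparison-between-boundary-conditions inequality costs a polynomial factor, so the output of this step is only $\phi_p(\calC_h(2n,n))\ge c_1 n^{-\alpha}$ for some constants $c_1,\alpha>0$.

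\textit{Step 3 (Hamming upgrade to $p_0$).} I would apply the integrated Hamming inequality~\eqref{eq:hamming_inf} to $A_n:=\calC_h(2n,n)$. The key estimate is a linear lower bound $\phi_{p_0}(H_{A_n})\ge\kappa\,n\,\phi_{p_0}(A_n^c)$ for some $\kappa>0$: on $A_n^c$ one has a dual vertical crossing of $[0,2n]\times[0,n]$, and by iterative exposure of extremal dual paths, together with the easy-direction hypothesis transferred to $p_0$ by stochastic domination, one should be able to extract of order $n$ essentially edge-disjoint dual vertical blockers, each of which forces at least one closed primal edge to be opened on every potential left--right path. Plugging the polynomial lower bound of Step~2 together with the linear $H$-bound into~\eqref{eq:hamming_inf} yields a functional inequality roughly of the form $n^{-\alpha}\lesssim \phi_{p_0}(A_n)\exp[-4(p_0-p)\kappa n(1-\phi_{p_0}(A_n))]$, which (by the same type of analysis as in the proof of Proposition~\ref{prop:easy}) forces $\phi_{p_0}(A_n)\to 1$ along the relevant subsequence, hence in particular $\liminf_n\phi_{p_0}(\calC_h(2n,n))>0$.

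\textit{Main obstacle.} The most delicate point is the linear lower bound on the Hamming distance in Step~3: a single dual blocker only gives $H_{A_n}\ge 1$, and extracting order $n$ edge-disjoint blockers requires a genuine geometric decomposition controlled by the easy-direction hypothesis on thin vertical sub-strips. A secondary, more tactical, difficulty is the handling of random-cluster boundary conditions for $q>1$ in the RSW reflection argument of Step~2; this is precisely the reason why a polynomial (rather than constant) bound is acceptable and why the slack $p_0>p$ is indispensable in the approach.
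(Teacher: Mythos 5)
The proposal takes a genuinely different route from the paper, and unfortunately both of its central steps have serious gaps that the paper is specifically designed to avoid.

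\textbf{Step 2 is not available at a fixed edge-weight.} The proposed classical RSW gluing — condition on an extremal horizontal crossing $\Gamma$ of $[0,n]\times[0,n]$, reflect across $\{x=n\}$, and link through FKG — breaks down for $q>1$ because the region above $\Gamma$ inherits boundary conditions that push \emph{against} the connection you want. Comparison between boundary conditions only gives a lower bound by the free-boundary measure on that region, and there is no reason for that lower bound to be polynomial in $n$; a priori it could decay exponentially. This is exactly why the paper does not establish polynomial RSW at a fixed $p$: the introduction explicitly singles out proving Step~2 without increasing the edge-weight as an open problem, and Lemma~\ref{lem:manycross} is the substitute the authors designed precisely to side-step the lowest-crossing conditioning that fails for $q>1$. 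Claiming the cost is ``only a polynomial factor'' asserts a result that is, in effect, a significant strengthening of the paper's Proposition~\ref{prop:RSW} and is not obtainable by this route.

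\textbf{Step 3's linear Hamming bound is false in the regime of interest.} Writing $A_n = \calC_h(2n,n)$, the quantity $H_{A_n}(\om)$ equals (by a Menger/max-flow argument) the maximum number of edge-disjoint dual vertical crossings of $[0,2n]\times[0,n]$ in $\om^*$. A lower bound of the form $\phi_{p_0}(H_{A_n})\geq \kappa\, n\,\phi_{p_0}(A_n^c)$ would therefore require of order $n$ edge-disjoint dual blockers with probability comparable to $\phi_{p_0}(A_n^c)$. This is a very dense dual configuration, which occurs only when the dual model is strongly supercritical (equivalently, $p_0$ deeply subcritical). But the regime where Proposition~\ref{prop:RSW} is needed is precisely when $p$ is near the transition and easy-direction crossings have uniformly positive probability in both the primal and dual models; there the typical number of disjoint dual crossings is $O(1)$, not $\Theta(n)$. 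Iteratively exposing extremal dual crossings does not rescue this: each exposure conditions the remaining region, and there is no uniform probability bound allowing you to extract order $n$ of them. Note that the paper's Hamming bound~\eqref{eq:ham_low} is of a completely different nature: it is of size $2^I$ where $I\sim\log(1/\phi_p(\calC_h(2n,n)))/\log\log(1/\phi_p(\calC_h(2n,n)))$, i.e.\ it is conditional on how small the hard-direction probability already is, and it comes not from counting many dual paths directly but from the ``winding'' argument of Lemma~\ref{lem:manycross}: rarity of hard crossings forces a single vertical crossing to oscillate, producing many \emph{separated} vertical crossings of a slightly shorter strip, and the dual paths between them furnish the Hamming bound. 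That mechanism is the crux of the paper and has no analogue in your proposal.

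In short, your Step~1 is fine, but Step~2 asserts a fixed-$p$ RSW statement that is strictly stronger than what the paper proves (and is flagged there as open), and Step~3 relies on a linear Hamming estimate that fails near criticality; the paper's Lemma~\ref{lem:manycross} and the multi-scale iteration over $(n_k,p_k)$ in the proof of Proposition~\ref{prop:RSW} are precisely the replacement for these two missing ingredients.
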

In light of Proposition~\ref{prop:easy}, the above result has the following immediate corollary, 
which is exactly the claim mentioned in Step 2 of the introduction.

\begin{corollary}\label{cor:RSW}
  If $p_0\in(0,1)$ is such that there exists an infinite-volume measure $\phi_{p_0}$ with 
  \begin{align*}
    \liminf_{n\rightarrow \infty}\phi_{p_0}(\calC_h(2n,n))=0,
  \end{align*}
  then for any $p<p_0$ there exists $c=c(p)>0$ such that for any $n\geq 0$,
  \begin{align*}
    \phi_{p}(0\longleftrightarrow \partial\Lambda_n) \leq e^{-cn}.
 \end{align*}
\end{corollary}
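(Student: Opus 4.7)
The corollary follows from a direct chaining of Proposition~\ref{prop:RSW} (in its contrapositive form) with Proposition~\ref{prop:easy}. The plan is as follows.

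Fix any $p < p_0$, and choose an intermediate parameter $p'$ with $p < p' < p_0$, taking $p' \notin \mathcal{D}_q$ (which is possible since $\mathcal{D}_q$ is at most countable, so that $\phi_{p'}$ is well-defined and unique). The contrapositive of Proposition~\ref{prop:RSW}, with the roles of $(p,p_0)$ there played by our $(p',p_0)$, asserts that if $\liminf_n \phi_{p_0}(\calC_h(2n,n)) = 0$, then $\liminf_n \phi_{p'}(\calC_v(2n,n)) = 0$. Since the hypothesis of the corollary gives us exactly the former, we obtain
\begin{equation*}
\liminf_{n\to\infty}\phi_{p'}(\calC_v(2n,n)) \;=\; 0.
\end{equation*}

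Next, I would apply Proposition~\ref{prop:easy} with its parameter $p_0$ replaced by $p'$. Since $p < p'$, the proposition produces a constant $c = c(p) > 0$ such that $\phi_p(x \longleftrightarrow y) \le \exp(-c|x-y|)$ for every $x,y \in \Lat$. Specializing to $x = 0$ and summing over $y \in \partial \Lambda_n$ (whose cardinality is only polynomial in $n$, since $\Lat$ is locally finite and doubly periodic) yields
\begin{equation*}
\phi_p\bigl(0 \longleftrightarrow \partial\Lambda_n\bigr) \;\le\; e^{-c' n}
\end{equation*}
for a slightly smaller rate $c' > 0$, which is the stated conclusion.

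There is essentially no obstacle here: the full substance of the argument is contained in Propositions~\ref{prop:easy} and~\ref{prop:RSW}, and the corollary is a purely formal combination of the two. The only mildly delicate point is the insertion of the intermediate value $p'$, which is needed because Proposition~\ref{prop:RSW} requires a strict inequality $p_0 > p$ between input and output parameters (so its contrapositive passes from a higher to a strictly lower one), while Proposition~\ref{prop:easy} likewise produces exponential decay only strictly below its input $p_0$; choosing any $p' \in (p, p_0) \setminus \mathcal{D}_q$ bridges these two strict inequalities and covers the entire range $p < p_0$.
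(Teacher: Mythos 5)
Your proof is correct and is exactly the intended argument: the paper states the corollary is immediate from Propositions~\ref{prop:easy} and~\ref{prop:RSW}, and your insertion of the intermediate parameter $p' \in (p,p_0)\setminus\mathcal D_q$ is precisely the right way to chain the two strict inequalities, with the final union-bound step converting the two-point estimate to the $0\lra\partial\Lambda_n$ form.
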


The proof of Proposition~\ref{prop:RSW} is based on the following lemma and its corollary.
Some terminology is needed for their statement. 
Let $\ga^1, \dots, \ga^K$ be open paths in some rectangle $[a,b]\times [c,d]$.
We say they are \emph{separated} in $[a,b]\times [c,d]$
if they are contained in distinct clusters of $[a,b]\times [c,d]$ 
(beware of the fact that we are speaking of clusters {\em in} $[a,b]\times[c,d]$).
In other words, no two are connected by open paths inside $[a,b]\times[c,d]$.
\begin{lemma}\label{lem:manycross}
  Let $p \in (0,1)$ and $n \in \bbN$. 
  There exist universal constants $c_0, c_1 >0$ such that, 
  if $1 \leq I \leq n/400$ is an integer that satisfies 
  \begin{align}\label{eq.u_bound}
    I^2 \leq c_0 \frac{ \phi_p\big( \calC_v(2n,n) \big)} { \phi_p(\calC_h(2n,n))^{c_1 / I}},
  \end{align}
  then 
  \begin{align}\label{eq.manycross}
    \phi_p \left([0,2n] \times [0,n/2] \mathrm{\ has\ }2^{I}\mathrm{\ separated\ vertical\ crossings}\right)
    \geq \tfrac{1}{2}\phi_p\big( \calC_v(2n,n) \big).
  \end{align}
\end{lemma}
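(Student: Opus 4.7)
I would prove the lemma by induction on $k$, doubling the number of separated crossings from $2^k$ to $2^{k+1}$ at each step. Set
\[
	f(k) := \phi_p\bigl([0,2n]\times[0,n/2] \text{ has at least } 2^k \text{ separated vertical crossings}\bigr),
\]
and aim to show $f(I) \geq \tfrac12 \phi_p(\calC_v(2n,n))$ under the hypothesis.

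\emph{Base case.} For $k=0$ one has $f(0) \geq \phi_p(\calC_v(2n,n))$. Indeed, any open vertical crossing of $[0,2n]\times[0,n]$ must intersect the line $\{y=n/2\}$; the portion below this line provides an open vertical crossing of $[0,2n]\times[0,n/2]$.

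\emph{Doubling step.} The heart of the argument is an inequality of the shape
\[
	f(k+1) \geq f(k) - C \cdot 2^k \cdot \phi_p(\calC_h(2n,n))^{c_1/I},
\]
where $C$ and $c_1$ are universal constants. Given $2^k$ separated vertical crossings, one may canonically pin down a particular family of them (for instance the leftmost such family, ordered by the lexicographic order on their leftmost vertices). I would then explore these crossings together with everything to their left in a deterministic order, and condition on the outcome. The domain Markov property describes the conditional measure on the complement: it is a random-cluster measure with an explicit (wired on the revealed crossings, free on the right side) boundary condition. Monotonicity of the random-cluster measure with respect to boundary conditions ensures that, for increasing events in the unrevealed region, probabilities only go up under this conditioning.

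Within each of the $2^k$ \emph{strips} separating consecutive revealed crossings, I would use the reflection and translation symmetries of $\Lat$ together with the FKG inequality to produce a new vertical crossing of $[0,2n]\times[0,n/2]$ confined to that strip; the hypothesis $I \leq n/400$ guarantees that each strip is wide enough (roughly $2n/2^I$) for the construction to fit. This uses crucially the fact that, by FKG, in each strip the probability of finding an additional vertical crossing is bounded below in terms of $\phi_p(\calC_v(2n,n))^{\text{something}}$, which gives the surviving portion of $f(k)$. The freshly built crossing is separated from the two neighbouring revealed crossings unless there is a horizontal open connection across the strip—an event whose probability is bounded above by a suitable power of $\phi_p(\calC_h(2n,n))$, which, after rescaling via FKG concatenation (as in~\eqref{eq.covering} applied to the dual direction), becomes $\phi_p(\calC_h(2n,n))^{c_1/I}$. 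A union bound over the $2^k$ strips then yields the doubling inequality.

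\emph{Summation.} Iterating the doubling inequality for $k=0,\ldots,I-1$ gives
\[
	f(I) \;\geq\; \phi_p(\calC_v(2n,n)) - C\sum_{k=0}^{I-1} 2^k \phi_p(\calC_h(2n,n))^{c_1/I}
	\;\geq\; \phi_p(\calC_v(2n,n)) - 2C\cdot 2^{I}\, \phi_p(\calC_h(2n,n))^{c_1/I}.
\]
Since $2^I \leq I^2$ for $I$ at most logarithmic in $n$—and one may, if needed, replace the $2^k$ in the above by a polynomial factor by doubling in smaller increments—the hypothesis $I^2 \leq c_0 \phi_p(\calC_v(2n,n))/\phi_p(\calC_h(2n,n))^{c_1/I}$ with $c_0$ small enough forces the subtracted term to be at most $\tfrac12 \phi_p(\calC_v(2n,n))$, concluding the proof.

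\emph{Main obstacle.} The delicate point is the doubling step: one must manufacture a new vertical crossing within each strip while simultaneously controlling the probability that it merges with one of the two bordering revealed crossings. Handling the boundary conditions produced by the exploration (which vary from strip to strip and are partially wired) and extracting the correct power $c_1/I$ of $\phi_p(\calC_h(2n,n))$ from a rescaling argument in strips of width $\sim 2n/2^I$ is where the bulk of the work—and the condition $I \leq n/400$—comes in.
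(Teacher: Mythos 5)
Your plan takes a genuinely different route from the paper, and unfortunately two of its core steps break down.

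The paper does not attempt to \emph{add} crossings one generation at a time. Instead it shows that a \emph{single} vertical crossing of a strip $\bfS(k)=[0,2n]\times[-k,k]$, on the complement of a low-probability event $\calH(k)$, must \emph{twist} enough to contain (at least) two separated crossings of a slightly shorter strip $\bfS((1-11u)k)$. The event $\calH(k)$ is defined once for the whole strip (a union over $O(1/u)$ translated rectangles $\bfR_j$) and its probability is $O(\sqrt{\alpha}/u)$ irrespective of how many crossings currently exist; iterating over $I$ nested strips $\bfS(k_0)\supset\cdots\supset\bfS(k_I)$ then costs only $O(I^2\sqrt{\alpha})$ in the union bound, which is exactly what the hypothesis \eqref{eq.u_bound} pays for. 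This is the structural reason the lemma carries an $I^2$ rather than a $2^I$ in its assumption.

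Your doubling scheme runs into two fatal obstacles. First, the gaps between separated crossings of $[0,2n]\times[0,n/2]$ carry no lower bound on their width: ``separated'' means not connected inside the rectangle (equivalently, there is a thin dual vertical path between consecutive crossings), and the crossings can be packed into a tiny horizontal band. Your assertion that each strip is ``roughly $2n/2^I$'' wide is unjustified, and without a width lower bound you cannot get any uniform lower bound on the probability of producing a new vertical crossing inside a gap. Second, your union bound over the $2^k$ gaps at step $k$ totals $\sum_{k<I}2^k\approx 2^I$ bad events, while the hypothesis only controls $I^2$ of them; $2^I\gg I^2$ as soon as $I\geq 5$, so \eqref{eq.u_bound} cannot absorb the loss. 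Passing to single-increment steps does not help: you would then iterate $2^I$ times and each step still carries a cost of order $\sqrt\alpha$, giving the same $2^I\sqrt\alpha$ total. The paper's trick is precisely that its bad event at each level is a property of the \emph{configuration} (failure to twist), not of the \emph{individual crossings}, so the cost at level $i$ does not multiply by the number of crossings already obtained. There is also a subtlety you gloss over in the conditioning step: pinning down the ``leftmost family'' of $2^k$ separated crossings requires revealing negative information (closed edges certifying that nothing further left works), and the resulting conditional measure on the unexplored region is not dominated from below in a way that straightforwardly gives you the crossing probability you need via monotonicity.
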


The statement above may seem cryptic. 
Here are a few observations that may help the reader assimilate the lemma. 
First of all, the conclusion~\eqref{eq.manycross} is strongest when $I$ is large, 
but the hypothesis~\eqref{eq.u_bound} is effectively an upper bound on $I$. 
Moreover it may even be that there exists no $I$ with the properties required in the lemma. 
The lemma will be applied in situations where $\phi_p\big( \calC_v(2n,n) \big)$ is bounded below by some constant. 
Then it states that, if $\phi_p(\calC_h(2n,n))$ is close to $0$ (so that $I$ may be large and satisfy~\eqref{eq.u_bound}),
the rectangle $[0,2n] \times [0,n/2]$ contains many separated vertical crossings with positive probability. 
Furthermore, the smaller $\phi_p(\calC_h(2n,n))$, the larger the number of separated vertical crossings. 

In words, this statement asserts that if typically $[0,2n] \times [0,n]$ is crossed vertically, 
but the probability of crossings in the hard direction is very small, 
then any vertical crossing needs to twist substantially, 
creating many separated crossings of a slightly smaller (in height) rectangle 
(see the discussion preceding the proof of Lemma~\ref{lem:manycross}).

The proof of Lemma~\ref{lem:manycross} represents the major difficulty of this article. 
We postpone it to the end of the section and first explain how it implies Proposition~\ref{prop:RSW}. 
A key observation is that the existence of separated vertical crossings of $[0,2n]\times[0,n/2]$ (as in~\eqref{eq.manycross})
implies a lower bound on the Hamming distance to the event $\calC_h(2n,n)$. 
Using~\eqref{eq:hamming}, this yields an explicit lower bound on crossing probabilities in the hard direction.
We formalize this next. 
\medbreak
For $x\in(0,1)$, set 
$$f(x):=\begin{cases}\frac{\log(1/x)}{\log\log(1/x)}&\text{ if $x<1/e$}\\ -\infty&\text{ otherwise}\end{cases}.$$ 

\begin{corollary}\label{cor:crucial}
  Let $\delta > 0$. 
  There exist constants $c_2=c_2(\delta)>0$ and $c_3=c_3(\delta)>0$ such that for any $p > p' $ and $n$ with
  $\phi_p \big( \calC_v(2n,n) \big) \ge \delta$ and 
  $n \geq c_2 f\big[\phi_p \big( \calC_h(2n,n) \big)\big]$, 
  the following holds:
  \begin{align*}
    \phi_{p'} \big( \calC_h(n,n/2) \big) 
    \le \exp\Big[ -c_3 (p-p') \delta \exp \Big( c_3 f\big[\phi_p \big( \calC_h(2n,n) \big)\big] \Big)\Big].
  \end{align*}
\end{corollary}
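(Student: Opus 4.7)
My plan is to combine Lemma~\ref{lem:manycross} with the integrated Hamming inequality~\eqref{eq:hamming_inf}. Write $u := \phi_p(\calC_h(2n,n))$; if $u \ge 1/e$, the claim is vacuous since $f(u) = -\infty$, so assume $u < 1/e$.

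I would choose $I := \lfloor c_3 f(u)\rfloor$ with $c_3 > 0$ a small universal constant. A short calculation gives $u^{c_1/I} \le (\log(1/u))^{-c_1/c_3}$, so for $c_3 < c_1/3$ the quantity $I^2 u^{c_1/I}$ tends to $0$ with $u$ and is at most $c_0 \delta$ once $u$ lies below a threshold depending only on $\delta$ and $c_3$; the constraint $I \le n/400$ then follows from $n \ge c_2 f(u)$ as soon as $c_2 \ge 400 c_3$. Applying Lemma~\ref{lem:manycross}, the event $\calE$ that $R := [0,2n]\times[0,n/2]$ contains $2^I$ pairwise separated vertical open crossings has $\phi_p(\calE) \ge \delta/2$.

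The main step is to convert $\calE$ into the lower bound $\phi_p(H_{\calC_h(n,n/2)}) \gtrsim 2^I \delta$. Between each pair of consecutive separated crossings $\gamma_i, \gamma_{i+1}$, planar duality applied inside the sub-region they bound produces a dual open path from the top to the bottom of $R$, yielding $2^I - 1$ edge-disjoint dual top-to-bottom crossings of $R$. Any such dual path straddling the midline $\{x = n\}$ must use at least one of only $O(n)$ dual edges crossing that segment, so all but $O(n)$ of the paths lie entirely inside $L := [0,n]\times[0,n/2]$ or $R_1 := [n,2n]\times[0,n/2]$; the reflection symmetry of $\Lat$ across $\{x = n\}$ preserves $\calE$ and swaps $L$ with $R_1$, so by symmetrisation the expected number of non-straddling paths lying in $L$ is at least $\tfrac{1}{2}(2^I - 1 - O(n))\phi_p(\calE)$. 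Each dual top-to-bottom path of $L$ must be severed to produce a primal horizontal crossing of $L$, which yields the desired lower bound on $\phi_p(H_{\calC_h(n,n/2)})$ whenever $2^I \gg n$; the complementary regime $2^I \lesssim n$ is handled by a minor variant of the same argument applied with $I$ replaced by a smaller value, giving a bound of the correct order in that range.

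Plugging this lower bound into~\eqref{eq:hamming_inf} for the increasing event $\calC_h(n,n/2)$ produces $\phi_{p'}(\calC_h(n,n/2)) \le \exp[-c'(p-p')\delta \cdot 2^I]$, and $2^I \ge \tfrac{1}{2}\exp(c_3 (\log 2) f(u))$ yields the claimed inequality after renaming constants. The main obstacle is the duality/symmetrisation step: recovering enough disjoint dual vertical crossings of the narrower sub-rectangle $L$ from the separated vertical primal crossings of the wider $R$ furnished by Lemma~\ref{lem:manycross}. This requires combining planar duality with the reflection symmetry of $\Lat$ and a direct combinatorial bound (of linear order in $n$) on the number of dual edges crossing the midline, which controls the number of straddling dual paths.
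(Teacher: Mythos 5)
Your high-level plan matches the paper's: pick $I \asymp f(u)$, verify hypothesis~\eqref{eq.u_bound}, apply Lemma~\ref{lem:manycross} to get $2^I$ separated vertical crossings of $[0,2n]\times[0,n/2]$ with probability $\ge \delta/2$, turn these into $2^I-1$ disjoint dual vertical crossings, lower-bound an expected Hamming distance, and feed it into~\eqref{eq:hamming_inf}. The gap is in the step where you try to produce a lower bound on $\phi_p\big(H_{\calC_h(n,n/2)}\big)$ directly, by arguing that most of the dual crossings of $R=[0,2n]\times[0,n/2]$ avoid the midline $\{x=n\}$ and therefore lie in $L=[0,n]\times[0,n/2]$. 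Your edge-count gives at most $O(n)$ straddling dual paths, so the bound you obtain is of order $(2^I - O(n))\delta$, which is vacuous whenever $2^I \lesssim n$. This regime is not marginal: with $I=\lfloor c_3 f(u)\rfloor$ and, say, $u$ of polynomial order in $1/n$, one has $f(u)=O(\log n/\log\log n)$ so $2^I = n^{o(1)} \ll n$. Your proposed repair --- rerunning the argument with $I$ replaced by a smaller value --- cannot work, since decreasing $I$ only decreases $2^I$ and keeps you in the bad regime; moreover, you need a lower bound of order $2^I\delta$, not of order $2^{I'}\delta$ for a smaller $I'$, for the final inequality to hold.

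The paper avoids this entirely by not localizing the dual crossings. The $2^I-1$ disjoint dual vertical crossings of the \emph{full-width} rectangle $[0,2n]\times[0,n/2]$ each have to be severed to create a primal horizontal crossing of that same rectangle, so one gets immediately $\phi_p\big(H_{\calC_h(2n,n/2)}\big)\ge 2^I\,\delta/4$ with no $O(n)$ correction. Then~\eqref{eq:hamming_inf} gives the bound on $\phi_{p'}\big(\calC_h(2n,n/2)\big)$, and the aspect ratio is adjusted only at the very end: by the $\tfrac{\pi}{2}$-rotation invariance of $\Lat$ and the FKG inequality, a horizontal crossing of $[0,2n]\times[0,n/2]$ is obtained by gluing three horizontal crossings of $n\times(n/2)$ rectangles and two vertical crossings of $(n/2)\times n$ rectangles, so that $\phi_{p'}\big(\calC_h(n,n/2)\big)^5 \le \phi_{p'}\big(\calC_h(2n,n/2)\big)$. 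This elementary step replaces your duality/symmetrisation/midline-counting argument and is where the problematic regime is sidestepped. Everything else in your proposal (the choice and verification of $I$, the passage to disjoint dual crossings, the use of~\eqref{eq:hamming_inf}) is sound.
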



\begin{proof}
  Fix $\delta >0$ and $p> p'$. 
  Let $n$ be an integer such that $\phi_p\big( \calC_v(2n,n) \big) \ge \delta$
  and $  n \geq c_2 f\big[\phi_p \big( \calC_h(2n,n) \big)\big]$, for a constant $c_2$ specified later.
  Define $I = \lfloor \cf f[\phi_p(\calC_h(2n,n))] \rfloor$,
  where $\cf=\cf(\delta)$ is some large constant to be specified.
  It is easy then to see that, for this choice of $I$, we have 
  $$ I^2 \leq c_0 \frac{ \phi_p\big( \calC_v(2n,n) \big)} { \phi_p(\calC_h(2n,n))^{c_1 / I}}$$
  for every $n \ge 1$, provided that $\cf$ is large enough 
  (where $c_0,c_1$ are the universal constants of Lemma~\ref{lem:manycross}).
  Furthermore, we find that $I \leq n/400$ by setting $c_2 =400 \cf$.
  Finally, we may limit ourselves to the case where $\phi_p\left(\calC_h(2n,n)\right)$ is small enough to have $I \ge 1$
  (the constant $c_3$ may be chosen so that the conclusion holds trivially otherwise). 

  The previous paragraph shows that with these choices of $\cf$ and $c_2$,
  $I$ satisfies the assumptions of Lemma~\ref{lem:manycross}, and we find 
   \begin{align*}
     \phi_p \left([0,2n] \times [0,n/2] \mathrm{\ contains\ }2^I\mathrm{\ separated\ vertical\ crossings}\right) 
     \geq \frac{1}{2}\phi_p\big( \calC_v(2n,n) \big)\ge \frac{\delta}2.
  \end{align*}
  Since the crossings in~\eqref{eq.manycross} are separated,
  there exist also at least $2^I - 1\ge 2^{I-1}$ disjoint dual vertical crossings of $[0, 2n]\times [0,n/2]$.
  This generates a lower bound on the expected Hamming distance to the event $\calC_h(2n, n/2)$:
  \begin{align}\label{eq:ham_low}
    \phi_p \left(H_{\calC_h(2n,n/2)}\right) \geq 2^{I}\cdot\frac{\delta}4 .
  \end{align}
  Inequality~\eqref{eq:hamming_inf} (the integrated form of~\eqref{eq:hamming}) implies that
  \begin{align*}
    \phi_{p'}( \calC_h(2n,n/2) )
    & \le \phi_{p} ( \calC_h(2n,n/2) ) \exp \Big( - \frac{1}{p(1-p)}\cdot(p-p')\cdot 2^{I}\cdot\tfrac{\delta}4 \Big)\\
    &\le \exp \Big[ - (p-p')\delta \exp\Big(c_3 f\big[\phi_p\big(\calC_h(2n,n)\big)\big]\Big) \Big], 
  \end{align*}
  where the constant $c_3>0$ depends on $\cf$ and therefore on $\delta$ only. In the last inequality, we used the choice of $I$ proposed at the very beginning of the proof.
  Finally, by combining crossings in the hard direction of five rectangles with side lengths $n$ and $n/2$,
  we may obtain a crossing of $[0,2n]\times [0,n/2]$. Thus, 
  $$\phi_{p'}\left(\calC_h(n,n/2)\right)^5 \le \phi_{p'}\left(\calC_h(2n,n/2)\right),$$
  and the result follows. 
\end{proof}
Let us now prove Proposition~\ref{prop:RSW} using Corollary~\ref{cor:crucial}.
\begin{proof}[Proposition~\ref{prop:RSW}]
  Fix $p_0>p$ and assume that  $\displaystyle\inf_{n\ge0}\phi_{p}(\calC_v(2n,n))=\delta>0$.
  Let $c_2, c_3$ be the constants given by Corollary~\ref{cor:crucial} for $\de$ given above. 
  For integers $n_0$ and $0 \leq k \leq  \log_2 \sqrt{n_0} $, define 
  \begin{align*}
    n_k &= 2^{-k} n_0,\\ 
    p_k &= p_0 - (p_0-p) \sum_{i=1}^k 2^{-i},\\   
    \beta_k &= \phi_{p_k}(\calC_h(2n_k,n_k)).
  \end{align*} 
  We aim to apply Corollary~\ref{cor:crucial} with the above values of $n$ and $p$. 
  We start by a simple verification of the hypothesis. 
  
 \begin{claim*}
  {\em  For $n_0$ large enough and for any integer $0\le k\le \log_2 \sqrt{n_0} $,}
  $$n_k > c_2 f(\beta_k).$$
\end{claim*}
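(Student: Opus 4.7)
The plan is to bound $\beta_k$ from below using the finite-energy property of the random-cluster model, and then to combine this bound with the definition of $f$.

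First, by construction of the sequence $(p_k)$ we have $p_k \in [p, p_0] \subset (0,1)$ for every $k$ considered in the claim. Hence every measure $\phi_{p_k}$ satisfies a uniform finite-energy lower bound: the conditional probability that any given edge is open, given the states of all other edges, is at least
\begin{align*}
\lambda \;:=\; \frac{p}{p + q(1-p)} \;>\; 0,
\end{align*}
a positive constant depending only on $p$ and $q$. Moreover, since $\calG$ is locally finite and doubly periodic, there exist a constant $A' = A'(\calG) > 0$ and, for every integer $m$, an explicit self-avoiding horizontal crossing path $\gamma_m$ of $[0, 2m] \times [0, m]$ of length $|\gamma_m| \leq A' m$.

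Second, applying the finite-energy bound iteratively to the edges of $\gamma_{n_k}$ yields
\begin{align*}
\beta_k \;\geq\; \phi_{p_k}\bigl(\text{every edge of } \gamma_{n_k} \text{ is open}\bigr) \;\geq\; \lambda^{A' n_k},
\end{align*}
whence $\log(1/\beta_k) \leq A n_k$ with $A := A' \log(1/\lambda) > 0$ independent of $k$. This is the key uniform estimate: the probability of the hard-direction crossing cannot be smaller than an exponential in $n_k$ with a rate controlled only by $p$, $q$ and the lattice.

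Third, I plug this bound into the definition of $f$. If $\beta_k \geq 1/e$ then $f(\beta_k) = -\infty$ and the inequality $n_k > c_2 f(\beta_k)$ is trivial. Otherwise $\beta_k < 1/e$ and
\begin{align*}
f(\beta_k) \;=\; \frac{\log(1/\beta_k)}{\log\log(1/\beta_k)} \;\leq\; \frac{A n_k}{\log\log(1/\beta_k)},
\end{align*}
which is strictly less than $n_k / c_2$ as soon as $\log\log(1/\beta_k) \geq c_2 A$, that is, as soon as $\beta_k \leq \exp(-\exp(c_2 A))$. Since $n_k \geq \sqrt{n_0}$ and $n_0$ may be taken arbitrarily large, this disposes of all values of $\beta_k$ outside a narrow range just below $1/e$; on that range $\beta_k$ is bounded away from $0$ by a constant depending only on $c_2$ and $A$, and a direct inspection, combined with $n_k \to \infty$, finishes the argument.

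I expect the main (minor) obstacle to be the case analysis around the discontinuity of $f$ at $1/e$: in the narrow window where $\beta_k$ is just below $1/e$, the finite-energy bound $\log(1/\beta_k) \leq A n_k$ is not the effective estimate on $f(\beta_k)$, and one must instead exploit the unbounded growth of $n_k \geq \sqrt{n_0}$ together with the fact that $\log(1/\beta_k)$ is bounded by the fixed constant $\exp(c_2 A)$ on that window.
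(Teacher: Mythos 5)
Your approach is genuinely different from the paper's. You bound $\beta_k$ from \emph{below} using the finite-energy property, obtaining $\log(1/\beta_k)\le A n_k$, and you never invoke the standing RSW hypothesis $\phi_p(\calC_v(2n,n))\ge\delta$, nor any gluing of crossings. The paper instead argues by contradiction: assuming $n_k\le c_2 f(\beta_k)$, it deduces (via the step ``$f(x)\le\log(1/x)$'') that $\phi_p(\calC_h(2n_k,n_k))\le\beta_k\le n_k^{-10}$, then glues crossings of smaller rectangles using FKG together with reflection and rotation symmetry to conclude $\phi_p(\calC_h(\tfrac12 n_k,n_k))\le n_0^{-1/4}$, contradicting the lower bound $\delta$ on easy-direction crossings. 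Your route is more local, which is appealing.

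However, your argument has a genuine gap: the window $\exp(-\exp(c_2 A))<\beta_k<1/e$ is not handled, and the obstacle there is not minor. On this window $f$ is \emph{not} bounded: as $x\to(1/e)^-$ one has $\log(1/x)\to1$ while $\log\log(1/x)\to0^+$, so $f(x)\to+\infty$. Thus $c_2 f(\beta_k)$ may exceed $n_k$ there no matter how large $n_0$ is, and ``direct inspection combined with $n_k\to\infty$'' cannot close it: the quantity to dominate is $f(\beta_k)$, which is unbounded on the window, not $\log(1/\beta_k)$, which is bounded. Structurally, the finite-energy estimate gives only a \emph{lower} bound on $\beta_k$, which tames the singularity of $f$ at $0$; the danger here is $\beta_k$ creeping up to $1/e$ from below, and excluding that requires an \emph{upper} bound on $\beta_k$, which your tools cannot supply. (For what it is worth, the paper's step ``$f(x)\le\log(1/x)$'' is itself valid only for $x\le e^{-e}$ and brushes past the same blind spot — a minor artifact of the cutoff $1/e$ chosen in the definition of $f$. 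If that cutoff were placed at $e^{-e}$ instead, your finite-energy proof would close cleanly, since on the remaining window one would have $f(\beta_k)\le\log(1/\beta_k)\le e$, easily beaten by $n_k\ge\sqrt{n_0}$.)
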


  \noindent{\em Proof of the Claim.}
  Assume that there exists an integer $0\le k\le \log_2 \sqrt{n_0}$ such that $n_k \leq c_2 f(\beta_k)$.
  We have 
  \begin{align}\label{eq.longcross}
    \phi_{p}(\calC_h(2n_k,n_k)) 
    \le \phi_{p_k}(\calC_h(2n_k,n_k)) = \beta_k
    \leq \exp \Big(- \frac{n_k}{c_2} \Big)
    \leq n_k^{-10},
  \end{align}
  where the second inequality uses that $f(x) \le \log(1/x)$.
  In the last inequality we have supposed that $n_k$ is larger than some rank depending only on $c_2$. 
  We may assume this since $n_k \geq \sqrt n_0$ and we may take $n_0$ as large as we wish. 

  Consider $ x \in \{0\} \times [0, n_k]$ and $y\in \{\tfrac12 n_k\} \times [0, n_k]$
  maximizing (among such pairs of vertices) the probability that they are connected in $[0,\tfrac12 n_k]\times[0,n_k]$.
  Then 
  $$ 
  \phi_{p} \left( x \xlra{[0,n_k/2]\times[0, n_k]} y \right)
  \geq \frac 1{n_k^2}\phi_{p}\left( \calC_h(\tfrac12 n_k, n_k) \right).
  $$
  Combining four times the above (also using reflection symmetry) we obtain
  $$\phi_{p}(\calC_h(2n_k,n_k)) \ge\frac1{n_k^8}\phi_{p}(\calC_h(\tfrac12 n_k,n_k))^4.$$
  Confronting this to~\eqref{eq.longcross} implies
  \begin{align*}
    \phi_{p}\left( \calC_h(\tfrac12 n_k, n_k) \right) \leq n_k^{-1/2} \leq n_0^{-1/4}.
  \end{align*}
  But $\phi_{p}\left(\calC_h(\tfrac12 n_k, n_k) \right) \ge \delta $ by assumption and symmetry under $\tfrac\pi2$-rotation.
  This leads to a contradiction for $n_0$ large enough. 
  \medbreak
  The argument that~\eqref{eq.longcross} contradicts  $\phi_{p}\left(\calC_h(\tfrac12 n_k, n_k) \right) \ge \delta $ 
  will be used several times in the rest of the paper. \begin{flushright}$\diamond$\end{flushright}
  
  We now fix $n_0$ large, in particular large enough for the property of the claim to be satisfied. 
  Then we may apply Corollary~\ref{cor:crucial} to each triplet $(n_k,p_k,p_{k+1})$ to obtain
  \begin{align*}
    \beta_{k+1}  
     \le \exp\Big(-c_3 2^{-(k+1)} (p_0-p) \de \exp\big(c_3f(\beta_k)\big)\Big) .
  \end{align*}
  Hence, there exist constants $ \Delta \geq e^{40}$ and $c_{\Delta} >0$, depending on $p_0-p$, $c_3$ and $\de$ only,
  such that if we assume $\beta_k \leq c_{\Delta} \Delta^{-k}$, the previous displayed equation implies  that
  \begin{align*}
    & c_3 f(\beta_k) \geq 2 k \log 2
    \quad \text{ and} \quad
    \beta_{k+1} 
    \leq  \exp\bigg[-c_3 (p_0-p) \de \exp\Big( \frac{c_3}2 f(\beta_k) \Big)\bigg] \leq \frac{\beta_k}{\Delta} 
    \leq c_{\Delta} \Delta^{-(k+1)}.
  \end{align*}
  Assume now that $\beta_0 \leq c_{\Delta}$.
  Then, by the above, $\beta_k \leq c_{\Delta} \Delta^{-k}$ for any $k \leq \log_2 \sqrt{n_0} $.
  Therefore, there exists $m\in[\sqrt n_0,n_0]$ ($m = n_{\lfloor \log_2 \sqrt{n_0} \rfloor}$) such that
  $$  \phi_{p}\left( \calC_h(2m,m) \right) \leq c_{\Delta} e^{-40 \lfloor \log_2 \sqrt{n_0} \rfloor} \leq  c_\Delta m^{-10}.$$
  Using the same procedure as at the end of the proof of the previous claim
  we obtain a contradiction for $n_0$ large enough, 
  since $m \geq \sqrt{n_0}$ and $\phi_{p}\left( \calC_h(2m,m) \right)\ge\delta$ by definition. 
 
  Therefore, the assumption $\phi_{p_0}(\calC_h(2n_0,n_0)) = \beta_0 \leq c_{\Delta}$ can not hold for $n_0$ large enough. 
  This implies that 
  $$\liminf_{n\to \infty}\phi_{p_0}(\calC_h(2n,n)) \geq c_{\Delta} >0.$$
\end{proof}

We now turn to the core of the argument, namely the proof of Lemma~\ref{lem:manycross}. 
The proof is inspired by the work of Bollob\'as and Riordan on Bernoulli percolation on Voronoi tessellations \cite{BolRio06}
(even though it makes use of different ingredients, and that the claim is not the same). 
We start with a brief description. 

Fix $I$ as in Lemma~\ref{lem:manycross} and let $v = \frac{1}{100 I}$. 
First we obtain an upper bound, as a function of $\phi_p(\calC_h(2n,n))$,
for the probability of crossing horizontally rectangles of height $k$ and width ${(1+v)k}$ for $k \in [\frac{n}4,n]$.
Using this bound, we show that one vertical crossing of $[0,2n] \times [0,n]$ contains, with high probability, 
three crossings of the slightly thinner rectangle $[0,2n] \times [23v n,(1 - 23v )n]$.
Repeating the procedure, we finally obtain $2^I$ crossings of $[0,2n] \times [\frac{n}{4},\frac{3n}{4}]$.
Moreover, these crossings are separated by dual paths. See Figure~\ref{fig:onemakesmany}.

\begin{proof}[Lemma~\ref{lem:manycross}]
  Fix $p,n$ and $I$ satisfying the assumptions of the lemma and set $v = \frac{1}{100 I}$ 
  (we will specify the values of the universal constants $c_0$ and $c_1$ later in the proof). 
  Define
  \begin{align}\label{eq:max}
    \alpha = \sup \Big\{ \phi_p\big( \calC_h(\lceil(2+v)k \rceil, 2k) \big) : k\in[\tfrac n8,\tfrac n2]\Big\}.
  \end{align}
  For any $k\in[\frac n8,\frac n2]$, 
  we may combine $32/v$ crossings in the hard direction 
  of rectangles with sides of length $2k$ and $\lceil(2+v)k \rceil$ (both horizontal and vertical)
  to create a horizontal crossing of $[0,2n] \times [0,n]$.
  Choosing $k\in[\frac n8,\frac n2]$ achieving the maximum in~\eqref{eq:max}, we conclude that 
  \begin{equation}\label{eq:bound alpha}
    \alpha \le \phi_p(\calC_h(2n,n))^{v/32} \le \phi_p(\calC_h(2n,n))^{2c_1/I},
  \end{equation} 
  by setting $c_1 = 1/6400$. 
  
  We start by proving a series of claims that will then be used to prove the lemma.
  For these claims, fix an integer $k \in [\frac n4,\frac n2]$ and $u \in [v, 1/3] $ such that $ku \in \bbZ$. 
  The first three claims are concerned with crossings of the rectangle $\bfR(k) = [-(1 + u)k, (1 +u) k ] \times [0,2k]$.
  
  \begin{claim}\label{startfromcenter} 
    Let $\calE(k)$ 
    be the event that there exists a vertical open crossing of $\bfR(k)$,
    with the lower endpoint not contained in $[-3uk, 3uk] \times \{0\}$, 
    or the higher endpoint not contained in $[-3uk, 3uk] \times \{2k\}$.
    Then $$ \phi_p (\calE(k)) \leq 4(\alpha + \sqrt{\alpha}). $$
  \end{claim}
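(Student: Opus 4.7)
The plan is to use the lattice symmetries to reduce bounding $\phi_p(\calE(k))$ to bounding the probability of a single ``one-sided'' event $B$, and then to decompose $B$ into a ``wide'' piece bounded directly by $\alpha$ and a ``narrow'' piece bounded by $\sqrt{\alpha}$ via FKG applied to a reflected copy of $B$ together with a topological (Jordan-curve) argument.

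First, by the reflection of the lattice through $\{x=0\}$ and through the horizontal line $\{y=k\}$ (both of which are symmetries, since $k$ may be taken in $\bbZ$ up to harmless rounding), the four bad-endpoint events comprising $\calE(k)$ have equal probability. Thus $\phi_p(\calE(k))\le 4\phi_p(B)$, where
\[
  B := \bigl\{\text{there exists a vertical open crossing of $\bfR(k)$ with lower endpoint in $[3uk,(1+u)k]\times\{0\}$}\bigr\},
\]
and it suffices to prove $\phi_p(B)\le \alpha+\sqrt{\alpha}$. I split $B\subseteq B^{\mathrm{w}}\cup B^{\mathrm{n}}$, where $B^{\mathrm{w}}$ is the event that there is an open path in $\bfR(k)$ from $[3uk,(1+u)k]\times\{0\}$ to the diagonally opposite interval $[-(1+u)k,-3uk]\times\{2k\}$, and $B^{\mathrm{n}}$ is the event that there is an open path in $\bfR(k)$ from $[3uk,(1+u)k]\times\{0\}$ to $[-3uk,(1+u)k]\times\{2k\}$, so that $B^{\mathrm{n}}$ captures the case where the upper endpoint does not lie in the far left portion.

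For $B^{\mathrm{w}}$, any realising path has horizontal span at least $6uk$; pasting $O(1/v)$ horizontal translates of such paths via the FKG inequality and the $(1,0)$-translation invariance of the lattice, in the spirit of~\eqref{eq.covering}, produces an open horizontal crossing of a rectangle of width $\lceil(2+v)k\rceil$ and height $2k$, which by~\eqref{eq:max} has probability at most $\alpha$. For $B^{\mathrm{n}}$, let $\sigma$ denote the reflection through $\{x=0\}$; by symmetry $\phi_p(\sigma(B^{\mathrm{n}}))=\phi_p(B^{\mathrm{n}})$, and by FKG
\[
  \phi_p\bigl(B^{\mathrm{n}}\cap\sigma(B^{\mathrm{n}})\bigr)\ge \phi_p(B^{\mathrm{n}})^2.
\]
On this intersection the two open paths must cross, because their four endpoints lie in interleaved cyclic order on $\partial\bfR(k)$; their union therefore contains an open path from the left part of $\bfR(k)$ to its right part, which, after a short horizontal extension analogous to the one used for $B^{\mathrm{w}}$, yields an open horizontal crossing of probability at most $\alpha$. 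Hence $\phi_p(B^{\mathrm{n}})\le\sqrt{\alpha}$, and combining with the bound on $B^{\mathrm{w}}$ gives $\phi_p(B)\le \alpha+\sqrt{\alpha}$.

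The main obstacle is the Jordan-curve argument for $B^{\mathrm{n}}$: verifying that the four endpoints really do interleave requires a short case analysis, because the upper endpoints of the two paths live in intervals that overlap in the central segment $[-3uk,3uk]$. When the interleaving fails, one of the two paths must itself have large horizontal span, placing it in the $B^{\mathrm{w}}$ regime and allowing a direct bound by $\alpha$. The aspect-ratio adjustments in both parts of the proof are routine given the choice $v=\tfrac{1}{100I}$, which provides ample room for the $O(1/v)$ FKG gluings needed to turn a sub-crossing into a full horizontal crossing of width $\lceil(2+v)k\rceil$.
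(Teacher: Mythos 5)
Your proof takes a genuinely different route from the paper's, but it has two gaps that I don't think can be repaired without effectively returning to the paper's decomposition.

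The paper does not split the one-sided event according to where the \emph{upper} endpoint lands. Instead, it splits according to whether the crossing reaches a far vertical line: either the crossing stays inside the narrower sub-rectangle $[-(1+u)k,(1-2u)k]\times[0,2k]$ (probability at most $\alpha$, since a vertical crossing of that sub-rectangle is a hard-direction crossing of width-to-height ratio at least $2+v$ after rotating), or it touches $\{(1-2u)k\}\times[0,2k]$. In the second case the path is reflected through the vertical line $\{x=-3uk\}$, a line that \emph{separates the two endpoints of the path}; this forces the interleaving of the four endpoints automatically, and the union of the path with its reflection already spans the full width of a rectangle with aspect ratio at least $2+v$, so it is immediately a hard-direction crossing of probability at most $\alpha$. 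No further gluing is required.

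Your two gaps are the following. (a) The bound $\phi_p(B^{\mathrm w})\le\alpha$ via ``pasting $O(1/v)$ horizontal translates'' is unjustified. A vertical crossing of the strip whose endpoints are horizontally offset by $6uk$ need not intersect its translate by $(6uk,0)$: for a monotone ``staircase'' path from $(a,0)$ to $(b,2k)$ with $a-b\ge 6uk$, the translate is a parallel staircase and they are disjoint. The endpoints of the translate both lie on the \emph{same} side of the original path (right of $(a,0)$ on the bottom and right of $(b,2k)$ on the top), so there is no topological obstruction forcing an intersection; pasting translates of vertical crossings does not produce a horizontal crossing. (b) The interleaving you need for $B^{\mathrm n}\cap\sigma(B^{\mathrm n})$ fails precisely when the upper endpoint $q_1$ of the $B^{\mathrm n}$-path lies to the right of the upper endpoint $q_2$ of the reflected path. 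Since both $q_1$ and $q_2$ can lie anywhere in $[-3uk,3uk]$, this can happen with both paths having small horizontal span (e.g.\ both $q_1,q_2$ near $0$), so it is not true that ``one of the two paths must have large horizontal span'' or fall into the $B^{\mathrm w}$ regime. And even when interleaving does hold, the connected union you obtain only touches $x\ge 3uk$ and $x\le -3uk$, i.e.\ it spans a horizontal width of order $6uk\ll k$; to promote that to a crossing of width $\lceil(2+v)k\rceil$ you again need the flawed pasting step. The paper avoids both issues by reflecting through the line $\{x=-3uk\}$, which (i) guarantees interleaving because the original path starts on one side and ends on the other side of that line, and (ii) makes the combined object span a macroscopic width $(2+2u)k$ at once, rather than $6uk$.
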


  \noindent{\em Proof of Claim 1.}
    Let $\beta$ be the $\phi_p$-probability that there exists a vertical open crossing of $\bfR(k)$,
    with the lower endpoint in $[-(1 + u)k,-3uk] \times \{0\}$.

    \begin{figure}
      \begin{center}
        \includegraphics[width=0.5\textwidth]{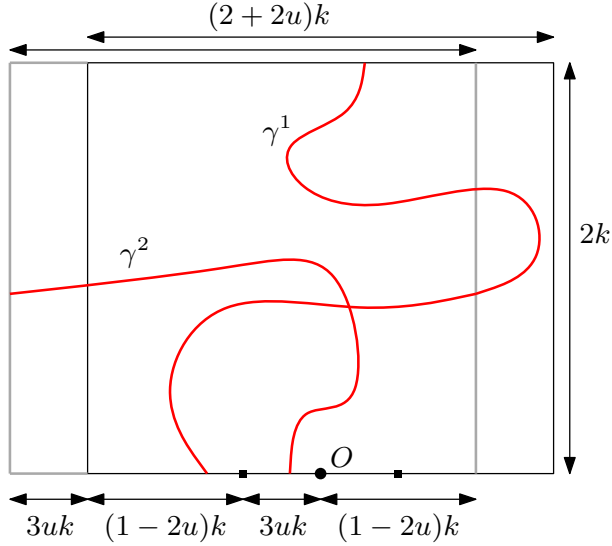}
      \end{center}
      \caption{In the black rectangle $\bfR(k)$, a path $\ga^1$ connects $[-(1 + u)k,-3uk] \times \{0\}$ to the top side.
        Except on an event of probability $\alpha$, $\ga^1$ crosses the vertical line $\{(1 - 2u)k\} \times [0,2k]$ (grey). 
        By reflection we may construct a path $\ga^2$, contained in $[-(1 + 4u)k, (1 - 5u)k] \times [0,2k]$, 
        and connecting $[ - 3uk ,(1 - 5u)k]\times \{0\}$ and $\{-(1 + 4u)k\} \times [0,2k]$. 
        The two induce a horizontal crossing of the grey rectangle $[-(1 + 4u)k, (1 - 2u)k] \times [0,2k]$. }
      \label{fig:startfromcenter}
    \end{figure}
    
    The probability of crossing $[-(1 + u)k, (1 - 2u)k] \times [0,2k]$ vertically is at most $\alpha$ (by definition of $\alpha$).
    Thus, with probability $\beta - \alpha$, there exists a vertical crossing of $\bfR(k)$ 
    with an endpoint in $[-(1 + u)k,-3uk] \times \{0\}$ which intersects the vertical line $\{(1 - 2u)k\} \times [0,2k]$.
    See Figure~\ref{fig:startfromcenter}.
    By reflection with respect to $\{-3uk\}\times [0,2k]$,
    with probability $\beta -\alpha$, there exists an open path in $[-(1 + 4u)k, (1 - 5u)k] \times [0,2k]$, 
    between $[ - 3uk ,(1 - 5u)k]\times \{0\}$ and $\{-(1 + 4u)k\} \times [0,2k]$.
    
    When combining the two events above using the FKG inequality, we obtain that, 
    with probability at least $(\beta -\alpha)^2$, 
    there exists a horizontal open crossing of $[-(1 + 4u)k, (1 - 2u)k] \times [0,2k]$.
    This event has probability less than $\alpha$, hence
    $ \beta \leq \alpha + \sqrt{\alpha}.$
    By considering the other possibilities for the lower and higher endpoints, the claim follows.
    \begin{flushright}$\diamond$\end{flushright}
    
    \begin{claim}\label{touchright} 
      Let $\calF(k)$ 
      be the event that there exists a vertical open crossing of $\bfR(k)$
      that does not intersect the vertical line $\{(1 - 2u)k \} \times [0,2k]$.
      Then  $$ \phi_p (\calF(k)) \leq 2\alpha. $$
    \end{claim}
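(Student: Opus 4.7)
The plan is to decompose $\calF(k)$ according to which side of the vertical line $\ell = \{(1-2u)k\}\times[0,2k]$ a hypothetical crossing lies on, and to bound each of the two resulting pieces by $\alpha$, mirroring the inequality already invoked (``by definition of $\alpha$'') in the proof of Claim~\ref{startfromcenter}. Concretely, any vertical open crossing of $\bfR(k)$ that is disjoint from $\ell$ must be entirely contained in one of the two closed half-rectangles $L = [-(1+u)k,(1-2u)k]\times[0,2k]$ (of width $(2-u)k$) or $R = [(1-2u)k,(1+u)k]\times[0,2k]$ (of width $3uk$). Writing $V_L, V_R$ for the events of a vertical open crossing of $L$ and $R$ respectively, a union bound gives $\phi_p(\calF(k))\leq \phi_p(V_L)+\phi_p(V_R)$; it therefore suffices to establish $\phi_p(V_L),\phi_p(V_R)\leq\alpha$.

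Each of these bounds follows from the same two-step argument. Using the $\tfrac{\pi}{2}$-rotational invariance of $\Lat$ together with translation invariance, $\phi_p(V_L)$ is equal to the probability of a horizontal crossing of a rectangle of width $2k$ and height $(2-u)k$, while $\phi_p(V_R)$ equals that of width $2k$ and height $3uk$. Since horizontal crossing probabilities decrease in the width of the rectangle and increase in its height (the latter by embedding, the former by restricting a crossing to its initial portion before it first exits a narrower sub-rectangle), each quantity is dominated by some $\phi_p(\calC_h(\lceil(2+v)k'\rceil,2k'))$ with $k'\in[n/8,n/2]$, namely $k'=\lceil(2-u)k/2\rceil$ for $V_L$ (the hypothesis $u\geq v$ guarantees $(2+v)(2-u)/2\leq 2$) and $k'=\max(\lceil 3uk/2\rceil,\lceil n/8\rceil)$ for $V_R$ (the hypothesis $u\leq 1/3$ ensures compatibility of the two monotonicity constraints). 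In both cases an elementary verification based on $k\in[n/4,n/2]$ and $v\leq u\leq 1/3$ places $k'$ within $[n/8,n/2]$; by the definition of $\alpha$, each probability is then at most $\alpha$, yielding $\phi_p(\calF(k))\leq 2\alpha$.

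The main (and essentially the only) obstacle is bookkeeping: correctly choosing the auxiliary value $k'$ and verifying both the inclusion in $[n/8,n/2]$ and the monotonicity constraints in the two cases. No new probabilistic input is required beyond the symmetry-and-monotonicity observation already used (implicitly) in the proof of Claim~\ref{startfromcenter}.
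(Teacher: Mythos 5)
Your proof is correct and follows exactly the paper's argument: decompose a crossing avoiding the line $\{(1-2u)k\}\times[0,2k]$ into one contained in the left sub-rectangle $[-(1+u)k,(1-2u)k]\times[0,2k]$ or the right one $[(1-2u)k,(1+u)k]\times[0,2k]$, and bound each piece by $\alpha$ via the definition of $\alpha$ (using $u\le 1/3$). The paper states the final bound in one line, whereas you spell out the rotation-and-monotonicity justification that each sub-rectangle is crossed vertically with probability at most $\alpha$; this is the same reasoning the paper leaves implicit.
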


  \noindent{\em Proof of Claim 2.}
    Any vertical crossing of $\bfR(k)$ not touching $\{(1-2u)k \} \times [0,2k]$ 
    is either contained in  $[-(1 + u)k, (1 - 2u) k ] \times [0,2k]$ or $[(1 - 2u)k, (1 + u) k ] \times [0,2k]$.
    Both these rectangles are crossed vertically with probability less than $\alpha$, and the claim follows (we used that $u\le 1/3$). 
    \begin{flushright}$\diamond$\end{flushright}

    \begin{claim}\label{topbeforeright} 
      Let $\calG(k)$ 
      be the event that there exists an open path in $\bbR \times [0,(2- 11 u)k]$
      between $[-3uk, 3uk] \times \{0\}$ and the vertical segment $\{(1-2u)k\} \times  [0,(2- 11 u)k]$.
      Then  $$ \phi_p (\calG(k)) \leq \alpha + \sqrt\alpha.$$
    \end{claim}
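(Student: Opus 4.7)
The strategy follows the reflection $+$ FKG $+$ planarity template of Claim~\ref{startfromcenter}, with the reflection axis chosen to make the endpoints of the two involved paths interleave on the boundary of a rectangle whose horizontal-crossing probability is controlled by $\alpha$.

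First, I reflect $\calG(k)$ across the vertical line $\{x = 3uk\} \times \bbR$, a lattice automorphism because $uk \in \bbZ$; call the image event $\calG'(k)$. Explicitly, $\calG'(k)$ asserts the existence of an open path in $\bbR \times [0,(2-11u)k]$ between $[3uk, 9uk] \times \{0\}$ and $\{-(1-8u)k\} \times [0,(2-11u)k]$. Reflection symmetry gives $\phi_p(\calG'(k)) = \phi_p(\calG(k))$, and the FKG inequality applied to these two increasing events yields
\[
\phi_p(\calG(k) \cap \calG'(k)) \ge \phi_p(\calG(k))^2.
\]

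On the intersection I select open paths $\gamma^1 \in \calG(k)$ and $\gamma^2 \in \calG'(k)$, each stopped at its \emph{first} visit to the target vertical wall, and then further truncated to the portion after its \emph{last} visit to the opposite vertical wall. This two-stage truncation leaves sub-paths $\tilde\gamma^1, \tilde\gamma^2$ contained in the rectangle
\[
R := [-(1-8u)k,\,(1-2u)k] \times [0,\,(2-11u)k],
\]
each with its endpoints on $\partial R$. If one of them starts on a vertical wall then that sub-path is itself a horizontal crossing of $R$ and we are done. Otherwise both start on the bottom side of $R$, at positions lying respectively in $[-3uk,3uk]$ and $[3uk,9uk]$; reading $\partial R$ counter-clockwise, the four endpoints of $\tilde\gamma^1$ and $\tilde\gamma^2$ interleave, so the Jordan curve theorem forces the two arcs to share a vertex in $R$, and their concatenation at this vertex yields an open horizontal crossing of $R$.

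Finally, $R$ has width $(2-10u)k$ and height $(2-11u)k$. The elementary estimate $(2-10u)/(2-11u) \ge (2+v)/2$, valid under $u \ge v$, combined with the choice $k' := (1-11u/2)k \in [\tfrac{n}{8},\tfrac{n}{2}]$ (in the relevant range $u \le 1/11$; for larger $u$ the strip has vanishing height $(2-11u)k$ and the event either admits an analogous argument with a different reflection axis or is empty), places the horizontal-crossing probability of $R$ under the supremum defining $\alpha$. Therefore
\[
\phi_p(\calG(k))^2 \le \phi_p(\calC_h(R)) \le \alpha,
\]
so $\phi_p(\calG(k)) \le \sqrt{\alpha} \le \alpha + \sqrt{\alpha}$.

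The main obstacle I expect is the planar-topology step: the paths live a priori in an unbounded horizontal strip rather than in $R$, and one must organise the two-stage truncation (first-visit of the target wall, then last-visit of the opposite wall) carefully, so that the resulting sub-paths genuinely lie in $R$ with endpoints on $\partial R$ in the interleaved positions required to invoke the Jordan-curve argument.
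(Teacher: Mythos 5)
Your proof is correct and follows essentially the same route as the paper's: reflect across $\{x=3uk\}$, apply FKG to the two increasing events, and force a horizontal crossing of $[-(1-8u)k,(1-2u)k]\times[0,(2-11u)k]$, whose probability is dominated by $\alpha$. The only (harmless) difference is organisational — the paper first discards, at probabilistic cost $\alpha$, the case where $\gamma$ already meets $\{-(1-8u)k\}$ and then applies FKG to obtain $(\beta-\alpha)^2\le\alpha$, whereas you absorb that case into the deterministic truncation argument and get the marginally sharper $\beta^2\le\alpha$, which of course still implies $\beta\le\alpha+\sqrt\alpha$.
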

    
  \noindent{\em Proof of Claim 3.}
    Let $\beta = \phi_p (\calG(k))$. 
    Suppose $\calG(k)$ occurs and let $\ga$ be an open path in $\bbR \times [0,(2- 11 u)k]$
    between $[-3uk, 3uk] \times \{0\}$ and $\{(1-2u)k\} \times  [0,(2- 11 u)k]$.
    There are two possibilities for $\ga$. 
    Either $\ga$ crosses the line $\{-(1-8u)k\} \times [0,(2-11u)k]$, or it does not. 

    The first situation arises with probability at most $\alpha$, 
    since it induces a horizontal crossing of the rectangle $[-(1-8u)k, (1-2u)k] \times [0, (2 - 11u)k]$.
    See the left diagram in Figure~\ref{fig:topbeforeright}.

    \begin{figure}
      \begin{center}
        \includegraphics[width=1.0\textwidth]{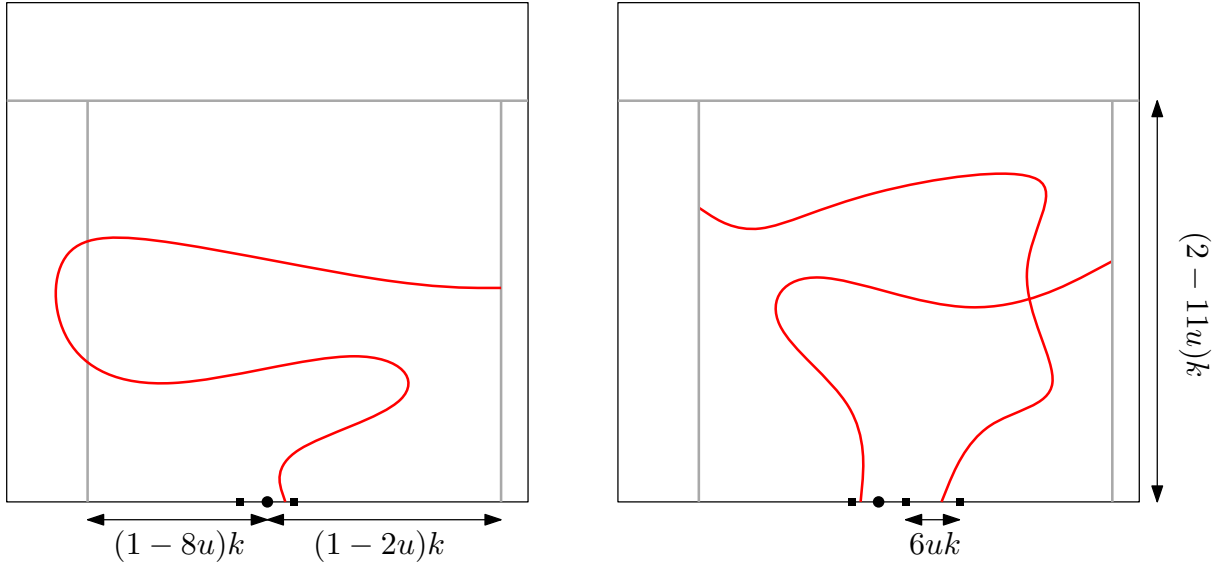}
      \end{center}
      \caption{The two possibilities for the path $\ga$. 
        The black rectangle $\bfR(k)$ is depicted for scaling purposes, 
        and the strip  $\bbR \times [0,(2- 11 u)k]$ is delimited by the top grey line. 
        The origin is marked by a disk. 
        \emph{Left:} the first situation, the path $\ga$ crosses the vertical line $\{-(1-8u)k\} \times [0,(2- 11 u)k]$.
        \emph{Right:} two occurrences of the second situation may be used to create a horizontal crossing of
        $[-(1-8u)k, (1-2u)k] \times [0, (2 - 11u)k]$.}
      \label{fig:topbeforeright}
    \end{figure}
    
    Thus the second situation arises with probability at least $\beta - \alpha$. 
    Then, by symmetry with respect to $\{3uk\} \times \bbR$ and the FKG inequality, 
    with probability at least $(\beta - \alpha)^2$, $[-(1-8u)k, (1-2u)k] \times [0, (2 - 11)k]$ contains two open paths:
    \begin{itemize}[noitemsep,nolistsep]
    \item one connecting $[-3uk, 3uk] \times \{0\}$ to $\{(1-2u)k\} \times  [0,(2- 11u)k]$,
    \item one connecting $[3uk, 9uk] \times \{0\}$ to $\{-(1-8u)k\} \times  [0,(2- 11 u)k]$.
    \end{itemize}
    These two paths induce an open horizontal crossing of $[-(1-8u)k, (1-2u)k] \times [0, (2-11u)k]$, 
    thus $(\beta - \alpha)^2 \leq \alpha$, and the claim follows. 
    \begin{flushright}$\diamond$\end{flushright}
  
  In the claims above we have defined the events $\calE(k)$, $\calF(k)$ and  $\calG(k)$. 
  In addition, define $\widetilde{\calG}(k)$ as the symmetric of $\calG(k)$ with respect to the line $\bbR \times \{ k \}$, 
  i.e. the event that there exists an open path in $\bbR \times [11uk,2k]$
  between $[-3uk, 3uk] \times \{2k\}$ and $\{(1-2u)k\} \times  [11uk,2k]$.
  The bound of Claim~\ref{topbeforeright} applies to $\widetilde{\calG}(k)$ as well. 

  All four events revolve around the rectangle $\bfR(k)$.
  In the following, we will use translates of these events, 
  and we will say for instance that $\calE(k)$ occurs in some rectangle $\bfR(k) + z$ 
  if $\calE(k)$ occurs for the translate of the configuration by $-z$. 
  
  \begin{claim}\label{onemakesthree}
    Except on an event $\calH(k)$ of probability at most $\frac1 u (54\alpha + 36\sqrt{\alpha})$, 
    any open vertical crossing of $\bfS(k) = [0,2n] \times [-k, k]$, 
    contains two separated vertical crossings of $ \bfS((1 - 11u)k) =  [0,2n] \times [-(1 - 11u)k, (1 - 11u)k]$.
  \end{claim}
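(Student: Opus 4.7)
The strategy is to apply the three previous sub-claims at every horizontal translate of $\bfR(k)$ (vertically adjusted so as to sit inside $\bfS(k)$) and then to read off the forbidden geometry on the complement. Concretely, for each $j=0,1,\dots,\lfloor 2n/(uk)\rfloor$, consider the translated rectangle $\bfR(k)+(juk,-k)\subset\bfS(k)$, and let $\calE_j(k)$, $\calF_j(k)$, $\calG_j(k)$, $\widetilde{\calG}_j(k)$ denote the corresponding translates of $\calE(k)$, $\calF(k)$, $\calG(k)$ and of $\widetilde{\calG}(k)$ (the reflection of $\calG(k)$ across $\bbR\times\{k\}$, to which Claim~\ref{topbeforeright} applies verbatim). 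Define $\calH(k)$ as the union of all these events over $j$. Since $k\ge n/4$, the number of translates is at most $8/u+1$, so a union bound together with Claims~\ref{startfromcenter}--\ref{topbeforeright} yields
\begin{align*}
\phi_p(\calH(k)) \le \tfrac{9}{u}\bigl(4(\alpha+\sqrt{\alpha})+2\alpha+2(\alpha+\sqrt{\alpha})\bigr)\le \tfrac{1}{u}(54\alpha+36\sqrt{\alpha}).
\end{align*}

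Next, working on $\calH(k)^{c}$, fix an open vertical crossing $\gamma$ of $\bfS(k)$. The aim is to exhibit, inside $\gamma$, two vertical crossings $\gamma_1,\gamma_2$ of the slightly narrower rectangle $\bfS((1-11u)k)$ that lie in distinct clusters of $\omega$ restricted to $\bfS((1-11u)k)$. I would proceed by contradiction: suppose every pair of vertical crossings of $\bfS((1-11u)k)$ contained in $\gamma$ is connected inside $\bfS((1-11u)k)$. Consider the leftmost and rightmost such sub-crossings (ordered by, e.g., their entry points on the bottom side). A connection between them inside $\bfS((1-11u)k)$ must travel a macroscopic horizontal distance while remaining within the vertical band $[-(1-11u)k,(1-11u)k]$. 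Choosing a translate index $j$ so that the entry point of one sub-crossing lies near the center of $\bfR(k)+(juk,-k)$, the connecting path would have to either (i) leave the sub-rectangle through one of its vertical sides, i.e.\ constitute a crossing of $\bfR(k)+(juk,-k)$ whose endpoint lies outside $[-3uk,3uk]\times\{\pm k\}$ (triggering $\calE_j(k)$), (ii) realise a vertical crossing of $\bfR(k)+(juk,-k)$ avoiding its central vertical line (triggering $\calF_j(k)$), or (iii) form a long horizontal arc staying a distance at least $11uk$ from the top or bottom side (triggering $\calG_j(k)$ or $\widetilde{\calG}_j(k)$). Each of these contradicts $\calH(k)^{c}$.

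\textbf{Main obstacle.} The probability bound is essentially a union-bound bookkeeping exercise, and the four preceding claims encode exactly the constraints needed on vertical crossings of $\bfR(k)$; the hard part is the topological argument that says ``a merger of the left-most and right-most sub-crossings of $\gamma$ inside $\bfS((1-11u)k)$ necessarily triggers one of $\calE_j,\calF_j,\calG_j,\widetilde{\calG}_j$ at some appropriately chosen $j$''. In particular one has to (a) make precise which pair of sub-crossings of $\gamma$ plays the role of ``leftmost'' and ``rightmost'' given that $\gamma$ may wind considerably inside $\bfS(k)$, and (b) locate a horizontal coordinate $juk$ at which the merger must cross the central vertical of the corresponding copy of $\bfR(k)$, so that the appropriate bad event is actually activated. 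I expect this case analysis, rather than any new probabilistic input, to be the delicate step, with the factor $11$ in $(1-11u)k$ chosen precisely to provide the needed vertical slack ($u$ above and below, plus the $3u$, $5u$ and $8u$ margins appearing in Claims~\ref{startfromcenter}--\ref{topbeforeright}).
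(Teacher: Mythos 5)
There is a genuine gap, in two places. First, your $\calH(k)$ omits the events that the thin rectangles $[juk,(2+(j+1)u)k]\times[-k,k]$ are crossed \emph{horizontally}. In the paper these events are what localize a given vertical crossing $\gamma$ of $\bfS(k)$ inside a single translate $\bfR_j$: on their complement $\gamma$ cannot traverse any $\bfR_j$ sideways, hence it is entirely contained in one $\bfR_j$ and is genuinely a vertical crossing \emph{of} that rectangle, to which the non-occurrence of $\calE_j,\calF_j,\calG_j,\widetilde{\calG}_j$ can be applied. Without this localization, those events (all of which concern paths \emph{contained in} $\bfR_j$) say nothing about $\gamma$, and your case analysis (i)--(iii) cannot get started. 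Relatedly, your union bound does not give the stated constant: by your own counts, $\tfrac{9}{u}\bigl(8\alpha+6\sqrt{\alpha}\bigr)=\tfrac1u(72\alpha+54\sqrt{\alpha})$, which exceeds $\tfrac1u(54\alpha+36\sqrt{\alpha})$. The correct bookkeeping is at most $6/u$ translates (using $k\ge n/4$ and that the $\bfR_j$ must fit inside $[0,2n]$), each contributing $9\alpha+6\sqrt{\alpha}$, the extra $\alpha$ per translate being precisely the horizontal-crossing event you dropped.

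Second, your deterministic step is organised as a contradiction about \emph{separation}, but it silently assumes the main point: the \emph{existence} of two disjoint sub-crossings of $\bfS((1-11u)k)$ inside $\gamma$. A vertical crossing of $\bfS(k)$ trivially contains one crossing of the narrower strip; producing a second disjoint one is exactly where $\calF$, $\calG$ and $\widetilde{\calG}$ enter. The argument is: on $\calF_j^c$, $\gamma$ must hit the vertical line $\{(2+(j-1)u)k\}\times[-k,k]$; letting $t$ and $s$ be the first and last hitting times, $\calG_j^c$ forces $\gamma$ to rise above height $(1-11u)k$ \emph{before} time $t$, and $\widetilde{\calG}_j^c$ forces it to descend below $-(1-11u)k$ \emph{after} time $s$, yielding two disjoint crossings of $\bfS((1-11u)k)$ (one before $t$, one after $s$). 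Separation is then the easy part: a connection between them inside $\bfS((1-11u)k)$ would create a vertical crossing of $\bfR_j$ avoiding that vertical line, i.e.\ $\calF_j$ would occur. Note also that your reading of $\calG$ as forbidding ``a long horizontal arc'' is off: $\calG$ forbids reaching the line at horizontal position $(1-2u)k$ (relative to the centre of $\bfR_j$) while staying \emph{below} height $(1-11u)k$, which is exactly the statement needed for the hitting-time argument, and it explains where the factor $11$ comes from.
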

  
  \noindent{\em Proof of Claim 4.}
  The rectangle $[0,2n] \times [-k, k]$ is the union of the rectangles
  $\bfR_j = [juk,(2 + (j + 2)u)k] \times [-k, k]$, for $0\le j\le J$, 
  where 
  $$J ~:=~  \big\lfloor\tfrac1u (\tfrac{n}{k} -2)\big\rfloor - 2 ~\leq~ 6/u.$$
  Let $\calH(k)$ be the union of the following events for $0\le j\le J$: 
  \begin{itemize}[nolistsep,noitemsep]
  \item the rectangle $[juk,(2 + (j + 1)u)k] \times [-k, k]$ contains a horizontal open crossing,
  \item $\calE(k)$ occurs in the rectangle $\bfR_j$,
  \item $\calF(k)$ occurs in the rectangle $\bfR_j$,
  \item at least one of $\calG(k)$ and $\tilde{\calG}(k)$ occurs in the rectangle $\bfR_j$.
  \end{itemize}
  Using a simple union bound and the estimates of Claims~\ref{startfromcenter}-\ref{topbeforeright}, we obtain 
  \begin{align}\label{eq:1001}
    \phi_p( \calH(k) ) 
    \leq \frac{100 \sqrt{\alpha}}{u}.  
  \end{align}
  Consider a configuration not in $\calH(k)$ containing a vertical open crossing $\ga$ of $\bfS(k)$. 
  We are now going to explain why such a crossing necessarily contains two 
  (in fact even three but we will not use this fact here) separated crossings of $\bfS((1-11u)k)$. 
  We recommend that the reader takes a look at Figure~\ref{fig:onemakesthree} first.
    
  Since none of the rectangles $[juk,(2 + (j + 1)u)k] \times [-k, k]$ is crossed horizontally,
  $\ga$ is contained in one of the rectangles $\bfR_j$.
  Fix the corresponding index $j$. Parametrize $\ga$ by $[0,1]$, with $\ga_0$ being the lower endpoint. 
  
  \begin{figure}
    \begin{center}
      \includegraphics[width=0.45\textwidth]{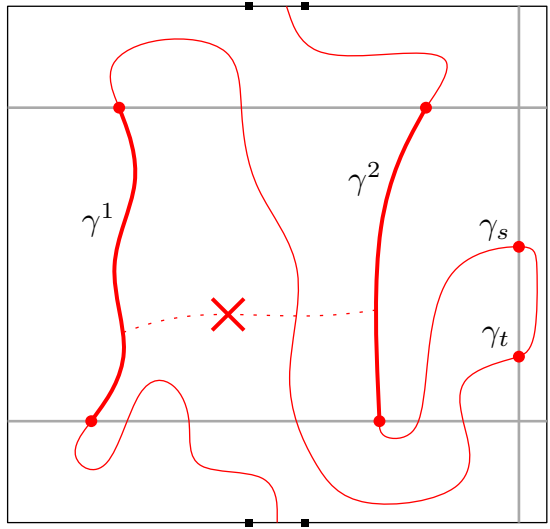}
    \end{center}
    \caption{The path $\ga$ under the assumption that
      $\calE(k)$, $\calF(k)$, $\calG(k)$ and $\tilde{\calG}(k)$ do not occur in $\bfR_j$.
      The endpoints are contained in segments of length $6uk$ around the centres of the top and bottom sides. 
      A first crossing of the strip $\bfS((1 -11 u)k)$ occurs between $\ga_0$ and $\ga_t$, 
      and a second between $\ga_s$ and $\ga_1$. 
      The two crossings $\ga^1, \ga^2$ need to be separated 
      to ensure that $\calF(k)$, $\calG(k)$ and $\tilde{\calG}(k)$ do not occur.}
    \label{fig:onemakesthree}
  \end{figure}
  
  Since $ \calE(k)$ does not occur in $\bfR_j$, $\ga_0$ and $\ga_1$, are contained in
  $[(1+(j-2)u)k,(1+(j+4)u)k] \times \{-k\}$ and  $[(1+(j-2)u)k,(1+(j+4)u)k] \times \{k\}$, respectively. 
  Moreover, since $\calF(k)$ does not occur in $\bfR_j$, 
  $\ga$ crosses the vertical line $\{(2 + (j-1)u)k \} \times [-k, k]$. 
  Let $t$ and $s$ be the first and last times that $\ga$ intersects this vertical line. 
  
  Since $\calG(k)$ does not occur in $\bfR_j$, 
  $\ga$ intersects the line $[0,2n] \times \{(1-11u)k \}$ before time $t$. 
  Likewise, since $\tilde{\calG}(k)$ does not occur, 
  $\ga$ intersects the line $[0,2n] \times \{-(1-11u)k \}$ after time $s$. 
  This implies that $\ga$ contains at least two disjoint crossings of $\bfS((1 - 11u)k)$.
  Call $\ga^1$ the first one (in the order given by $\ga$) and $\ga^2$ the last one. 
  
  The above holds for any vertical crossing $\ga$ of  $\bfS(k)$, 
  hence the crossings $\ga^1$ and $\ga^2$ are necessarily separated in $\bfS((1 - 11u)k)$.
  Indeed, if they were connected inside  $\bfS((1 - 11u)k)$, then $\calF(k)$ would occur.
  \begin{flushright}$\diamond$\end{flushright}
    
  \begin{remark}
    It is actually possible to prove that, in the situation described above, 
    $\ga$ contains at least three separated vertical crossings of $\bfS((1 - 11u)k)$.
    We do not detail this as it is not essential for our proof, 
    but the situation will be depicted in the relevant figures. 
  \end{remark}
  
  Getting back to the proof of the lemma.
  Let 
  $k_i = \lfloor (1 - 22vi)n/2 \rfloor $ for $0\le i\le I$.
  We will investigate vertical crossings of the nested strips $\bfS(k_i) =[0,2n] \times [-k_i, k_i]$.
  Note that $\bfS(k_0)$ is contained in a translation of the rectangle $[0,2n] \times [0,n]$, 
  and that $\bfS(k_I)$ contains a translation of the rectangle $[0,2n] \times [0,n/2]$.
  
  Fix a sequence $(u_i)_i$, 
  with $u_i \in [v, 2v]$ and $k_i u_i \in \bbZ$ for $ 0\le i < I$. 
  The existence of $u_i$ is due to the fact that $v \ge \tfrac4n$ (since $I \le n/400$).
  Define the events $\calH(k_i)$ of Claim~\ref{onemakesthree} for these values of~$u_i$. 
  Except on the event $\bigcup_{i = 0}^{I-1}  \calH(k_i)$, 
  any vertical crossing of $\bfS(k_0)$ generates $2^I$ 
  vertical open crossings of $\bfS(k_I)$ which are separated in $\bfS(k_I)$.
  Indeed, by Claim~\ref{onemakesthree}, every crossing of $\bfS(k_i)$ contains two separate crossings of 
  $\bfS(k_{i+1})\subset \bfS((1 - 11 u_i)k_{i}) $.
  See Figure~\ref{fig:onemakesmany}.  
  By the union bound and Claim~\ref{onemakesthree},
  \begin{align*}
    \phi_p \left( \bigcup_{i = 0}^{I - 1}  \calH(k_i) \right)  
    \leq \frac{100\sqrt{\alpha}}{u}I\le 10\,000 \sqrt{\alpha} I^2
    \leq \frac{\phi_p\big( \calC_v(2n,n) \big)}2,
  \end{align*}
  where the second inequality is due to the choice of $I$ 
  and the fact that we may assume $c_0 \leq \frac{1}{20\,000}$
  (see~\eqref{eq.u_bound} and~\eqref{eq:bound alpha}).
  But $\bfS(k_0)$ is crossed vertically with probability at least $\phi_p\big( \calC_v(2n,n) \big)$.
  Thus, with probability at least $\phi_p\big( \calC_v(2n,n) \big)/2$, $\bfS(k_I)$ contains $2^I$ separated vertical crossings.
  The claim follows from the fact that $\bfS(k_I)$ contains a translate of $[2n,n/2]$.
\end{proof}

\begin{figure}
  \begin{center}
    \includegraphics[width=0.7\textwidth]{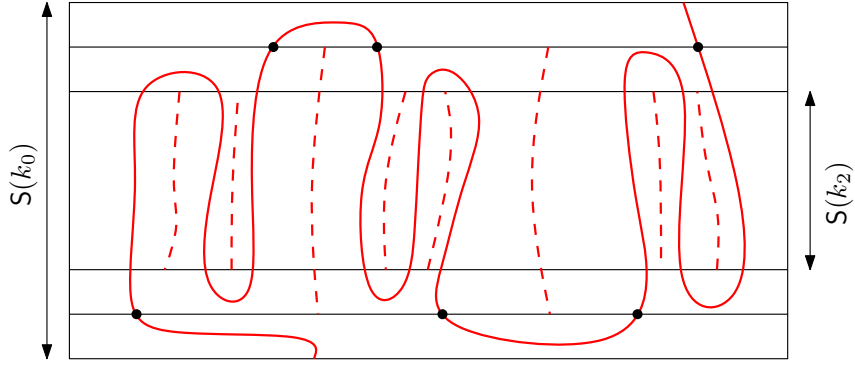}
  \end{center}
  \caption{Under $\left( \cup_{i = 0}^{I - 1}  \calH(k_i)\right)^c$, one vertical crossing of $\bfS(k_0)$ 
    contains two (in fact even three) separated crossings of $\bfS(k_1)$ (with marked endpoints).
    Each such crossing contains in turn two (in fact even three) separated crossings of $\bfS(k_2)$.
    This generates four (in fact even nine) separated crossings in $\bfS(k_2)$, 
    and thus three (in fact even eight) dual crossings between them.}
  \label{fig:onemakesmany}
\end{figure}

\section{Proof of Theorem~\ref{thm:main}}\label{sec:mainproof}

The following coupling argument may be used in conjunction with Theorem~\ref{thm:ggsh1} of Graham and Grimmett
to obtain sharp threshold results, as in \cite[Lem. 6.3]{GraGri11}.
In our case the desired result is stated subsequently as a corollary.
For an edge $e$ and a configuration $\om$, write $C_e(\om)$ 
for the open cluster of $e$ in $\om$, i.e. for the union of the open clusters of the endpoints of $e$. 
	
\begin{proposition}\label{prop:ggcoupling}
	Let $G$ be a finite graph, $e \in E_G$ be an edge and $\xi$ be a boundary condition. 
	For $q \geq 1$ and $p \in (0,1)$
	there exists a measure $\Phi$ on $\Om \times \Om$ 
	such that, if $(\pi, \om)$ is distributed according to $\Phi$,
	\begin{itemize}\itemsep-1pt
	\item $\pi$ is distributed according to $\phi_{p,q,G}^\xi( . \cond \pi(e) = 0)$,
	\item $\om$ is distributed according to $\phi_{p,q,G}^\xi( . \cond \om(e) = 1)$,
	\item $\Phi$-almost surely $\pi \leq \om$ and $\pi(f) = \om(f)$ for edges $ f \notin C_e(\om)$. 
	\end{itemize}
\end{proposition}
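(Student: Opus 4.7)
The plan is to build $\Phi$ in two stages: first sample $\omega$ from $\phi_{p,q,G}^\xi(\,\cdot\,\cond \omega(e)=1)$, and then, conditionally on $\omega$, resample the states of the edges lying inside the subgraph $G[C]$ induced on the cluster $C := C_e(\omega)$, keeping $\omega$ unchanged on every edge with at least one endpoint outside $C$ and forcing $\pi(e)=0$. By construction this guarantees that $\pi$ agrees with $\omega$ off $C_e(\omega)$ and that $\pi(e)=0$. The remaining task is to choose the resampling kernel so that, simultaneously, $\pi \le \omega$ almost surely on $E(G[C])$ and $\pi$ has marginal law $\phi_{p,q,G}^\xi(\,\cdot\,\cond \pi(e)=0)$.

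First I would identify the two relevant conditional laws using the domain Markov property. Let $\eta$ denote the restriction of $\omega$ to the edges having at least one endpoint outside $C$, and let $\chi=\chi(\xi,\eta)$ be the boundary condition on $\partial G[C]$ read off from $\xi$ and $\eta$ (two boundary vertices being wired in $\chi$ iff they are connected through $\eta$, possibly using $\xi$). Conditionally on $(C,\eta)$ under $\phi_{p,q,G}^\xi(\,\cdot\,\cond \omega(e)=1)$, the restriction $\omega|_{E(G[C])}$ is distributed as $\phi_{p,q,G[C]}^{\chi}$ conditioned on $\{\text{all vertices of } C \text{ lie in a single open cluster}\}$. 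Symmetrically, conditionally on the outside configuration being $\eta$ under $\phi_{p,q,G}^\xi(\,\cdot\,\cond \pi(e)=0)$, the restriction $\pi|_{E(G[C])}$ is distributed as $\phi_{p,q,G[C]}^{\chi}$ conditioned on $\{\pi(e)=0\}$. Since $q \ge 1$, both conditional measures satisfy the FKG lattice condition, and the second, whose conditioning event is decreasing, is stochastically dominated by the first, whose conditioning event is increasing; Holley's theorem then furnishes a monotone coupling of the two, which I take as the resampling kernel. The coupled pair then satisfies $\pi|_{E(G[C])} \le \omega|_{E(G[C])}$ by construction, whence $\pi \le \omega$ globally.

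The main obstacle is verifying that the marginal of $\pi$ so constructed is indeed $\phi_{p,q,G}^\xi(\,\cdot\,\cond \pi(e)=0)$. The difficulty lies in an asymmetry: the construction is indexed by $C = C_e(\omega)$, which depends on $\omega$, whereas the target measure is naturally indexed by $\tilde C := C_u(\pi)\cup C_v(\pi)$ (with $u,v$ the endpoints of $e$), and in general $\tilde C \subseteq C$ with possibly strict inclusion. One must therefore sum the joint density $\Phi(\omega,\pi)$ over all pairs $(\omega,C)$ compatible with a prescribed $\pi$. I expect the bookkeeping to go through by applying the domain Markov property a second time, now under the target measure, to rewrite the normalising factors arising from the two conditionings as partition functions on $G[C]$ with boundary condition $\chi$. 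The normalisation introduced by conditioning the first measure on ``all of $C$ connected'' should cancel exactly against the one introduced by conditioning the target on $\pi(e) = 0$, reducing the verification to a transparent identity between partition functions on induced subgraphs. Pinning down this cancellation, together with the combinatorial correspondence between $C$ and $\tilde C$, is the step requiring the most care.
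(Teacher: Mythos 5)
The two-stage construction you describe does not produce a $\pi$ with the correct marginal law, and the cancellation you anticipate at the end does not in fact occur. This is already visible in the simplest nontrivial instance: take $q=1$, free boundary conditions, and let $G$ have vertices $a,b,c$ and edges $e=(a,b)$, $f=(b,c)$. Under $\phi_{p,1,G}^{0}(\,\cdot\cond\om(e)=1)$, $\om=(1,1)$ with probability $p$ and $\om=(1,0)$ with probability $1-p$. When $\om=(1,0)$ one has $C_{e}(\om)=\{a,b\}$, so $G[C]$ contains only $e$ and your kernel forces $\pi=(0,0)$. When $\om=(1,1)$ one has $C_{e}(\om)=\{a,b,c\}$, so $G[C]$ contains both edges and your kernel resamples $(\pi(e),\pi(f))$ from $\phi_{p,1,G}^{0}(\,\cdot\cond\pi(e)=0)$, giving $\pi=(0,1)$ with probability $p$ and $\pi=(0,0)$ with probability $1-p$. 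The resulting law of $\pi$ is therefore $(0,1)$ with probability $p^{2}$ and $(0,0)$ with probability $1-p^{2}$, whereas the target is $p$ and $1-p$. No choice of monotone (Holley) coupling inside $G[C]$ can fix this, because the error lies in the decision to resample all of $E(G[C])$: the correct coupling in this example simply sets $\pi(e)=0$ and $\pi(f)=\om(f)$, hence assigns zero mass to the pair $\bigl((0,0),(1,1)\bigr)$, while any kernel of your form necessarily assigns it mass $p(1-p)>0$.

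The source of the discrepancy is precisely the asymmetry you flag: a given $\pi$ can arise from several pairs $(\om,C)$, with $\tilde{C}\subsetneq C$, and the partition-function bookkeeping does not telescope. The paper sidesteps this summation entirely by taking a different route, following \cite[Prop.~3.28]{Gri06}: one constructs a continuous-time Markov chain directly on the set $S$ of pairs $(\pi,\om)$ satisfying $\pi(e)=0$, $\om(e)=1$, $\pi\le\om$ and $\pi(f)=\om(f)$ off $C_{e}(\om)$, with jump rates chosen so that each coordinate is individually reversible for its conditioned random-cluster law. The invariant measure of that chain is the desired coupling; since the constraint linking $\pi$ and $\om$ is built into the state space and preserved by the dynamics, the marginals come out correct without ever confronting the combinatorial correspondence between $C$ and $\tilde{C}$.
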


\begin{corollary}\label{cor:gg_applied}
	For any $0 < p_0 < p_1 < 1$, there exists $c = c(p_0)>0$ such that, for $n \geq 1$, 
	\begin{align}\label{eq:ggsh2}
		\phi_{p_0}(\calC_h(2n,n))
		\big( 1-\phi_{p_1}(\calC_h(2n,n)) \big)
		\leq \big(\phi_{p_1}(0 \lra \pd \Ball_n) \big)^{c(p_1 - p_0)}.
	\end{align}
\end{corollary}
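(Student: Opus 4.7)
\textbf{The plan} is to apply the sharp-threshold inequality~\eqref{eq:ggsh1} to the increasing event $A = \calC_h(2n,n)$ on a finite box $G$ containing $[0,2n]\times[0,n]$ (with, say, free boundary conditions), use the coupling of Proposition~\ref{prop:ggcoupling} to control the maximal influence $m_{A,p}$ by a one-arm probability, integrate in $p$ from $p_0$ to $p_1$, and finally pass to the infinite-volume limit.

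\textbf{Bounding the influence.} For an edge $e \notin [0,2n]\times[0,n]$, the event $A$ does not depend on $\om(e)$, so the corresponding difference vanishes. Fix then an edge $e$ in $[0,2n]\times[0,n]$ and let $\Phi$ be the coupling from Proposition~\ref{prop:ggcoupling}. Since $\pi \le \om$ and $A$ is increasing, $\{\pi\in A\} \subseteq \{\om\in A\}$, so
\begin{equation*}
\phi_{p,q,G}^0(A\mid\om(e)=1) - \phi_{p,q,G}^0(A\mid\om(e)=0) = \Phi(\om\in A,\, \pi\notin A).
\end{equation*}
On this event any horizontal open crossing $\gamma$ of $\om$ must use some edge $f$ with $\pi(f)\neq\om(f)$, hence $f\in C_e(\om)$; since $\gamma$ is an open path sharing a vertex with the open cluster $C_e(\om)$, the whole of $\gamma$ is contained in $C_e(\om)$. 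Consequently $C_e(\om)$ meets both $\{0\}\times[0,n]$ and $\{2n\}\times[0,n]$, and in particular at least one endpoint of $e$ is connected in $\om$ to a point at distance at least $n$. By translation invariance,
\begin{equation*}
m_{A,p} \le C\,\phi_{p,q,G}^0(0\lra\partial\Ball_n),
\end{equation*}
for a constant $C$ depending only on the lattice.

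\textbf{Integrating the differential inequality.} Because $\{0 \lra \partial\Ball_n\}$ is an increasing event, $\phi_{p,q,G}^0(0\lra\partial\Ball_n)$ is non-decreasing in $p$, so plugging the above bound into~\eqref{eq:ggsh1} gives, for every $p\in[p_0,p_1]$,
\begin{equation*}
\frac{d}{dp}\log\frac{\phi_{p,q,G}^0(A)}{1-\phi_{p,q,G}^0(A)} \ge c\,\log\frac{1}{2C\,\phi_{p_1,q,G}^0(0\lra\partial\Ball_n)}.
\end{equation*}
Integrating from $p_0$ to $p_1$, exponentiating, and using $\phi_{p_1,q,G}^0(A)(1-\phi_{p_0,q,G}^0(A)) \le 1$ yields
\begin{equation*}
\phi_{p_0,q,G}^0(A)\big(1-\phi_{p_1,q,G}^0(A)\big) \le \big(2C\,\phi_{p_1,q,G}^0(0\lra\partial\Ball_n)\big)^{c(p_1-p_0)}.
\end{equation*}
All events involved are local, so letting $G\uparrow\Lat$ gives the same inequality with $\phi_{p_0}$ and $\phi_{p_1}$. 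The factor $2C$ is absorbed into the exponent by the standard dichotomy: when $2C\phi_{p_1}(0\lra\partial\Ball_n)\ge 1$ the claim is vacuous since the left-hand side is at most $1$, and otherwise one may replace $2C$ by $1$ at the cost of shrinking $c$, giving~\eqref{eq:ggsh2}.

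\textbf{Main obstacle.} The substantive point is the geometric observation that any horizontal crossing meeting $C_e(\om)$ at even a single vertex lies entirely in $C_e(\om)$; this is what collapses the influence of $e$ to a one-arm probability. Everything else is routine: the differential inequality is integrated algebraically, and the passage to infinite volume is standard since all relevant events are local.
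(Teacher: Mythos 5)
Your proof follows the paper's argument step for step: the same coupling from Proposition~\ref{prop:ggcoupling}, the same observation that on $\{\om\in\calC_h(2n,n),\ \pi\notin\calC_h(2n,n)\}$ the cluster $C_e(\om)$ must contain a horizontal crossing and hence an endpoint of $e$ is connected to distance $n$, the same integration of~\eqref{eq:ggsh1}, and the same passage to the infinite-volume limit. Two small imprecisions, both cosmetic: first, the bound $m_{A,p}\le C\,\phi_{p,q,G}^0(0\lra\partial\Ball_n)$ invokes ``translation invariance,'' but $\phi_{p,q,G}^0$ on a fixed finite box $G$ is not translation invariant---the paper instead writes $\max_{u\in V_G}\phi_{p,q,G}^0(u\lra\partial\Ball_n+u)$ and only recovers a translation-invariant one-arm probability after $G\uparrow\Lat$; second, since the $\om$-marginal of $\Phi$ is the measure conditioned on $\{\om(e)=1\}$, converting back to the unconditioned one-arm probability requires the finite-energy property, which your constant $C$ absorbs without mention.
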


The proposition may be proved by an exploration argument as sketched in \cite{GraGri11} (see also references therein). 
For completeness we provide a proof, then we prove the corollary. 

\begin{proof}[Proposition~\ref{prop:ggcoupling}]
	Fix $G, e, \xi, p$ and $q$ as in the proposition. 
	We follow the coupling between measures presented in the proof of \cite[Proposition 3.28]{Gri06}.
	
	For $f \in E_G$ and $\om \in \Om$
	let $\om^f$ and $\om_f$ be the configurations equal to $\om$ on edges different from $f$, 
	and equal to $1$ and $0$, respectively, on $f$. 
	Also define $D_f(\om)$ to be the indicator function of the event that the endpoints of $f$ 
	are not connected in $\om^\xi \setminus \{f\}$.
	
	Define a continuous time Markov chain on 
	$$S := \Big\{ (\pi, \om) \in  \Om \times \Om : 
	\pi(e) = 0,\, \om(e) = 1, \,
	\pi \leq \om \text{ and }
	\pi(f) = \om(f) \text{ for all $f \notin C_e(\om)$} \Big\} $$
	with generator $J$ given by 
	\begin{align*}
		J(\pi_f , \om ; \pi^f, \om^f) &= 1, \\
		J(\pi , \om^f ; \pi_f, \om_f) &= \frac{1-p}{p} q^{D_f(\om)}, \\
		J(\pi^f , \om^f ; \pi_f, \om^f) &= \frac{1-p}{p}(q^{D_f(\pi)} -  q^{D_f(\om)}),
	\end{align*}
	for all $f \in E_G \setminus \{e\}$. 
	All other non-diagonal elements of $J$ are $0$
	and the diagonal ones are such that 
	$$ \sum_{(\pi',\om')\in S} J(\pi, \om ; \pi', \om') = 0.$$
	It is easy to check that the formula above ensures that, for any $(\om, \pi) \in S$, 
	$J(\om, \pi ; \pi', \om') \neq 0$ only if $(\om', \pi') \in S$. 
	Hence the Markov chain is indeed defined on $S$. 
	It is proved in \cite{Gri06} that this Markov chain has a unique invariant measure 
	which is the desired coupling $\Phi$. 
\end{proof}

\begin{proof}[Corollary~\ref{cor:gg_applied}]
	Fix $0< p_0 < p_1 < 1$ and suppose that there exists a unique infinite-volume measure for each edge-weight $p_0$, $p_1$. 
	We prove the statement for such values of $p_0, p_1$; it extends to all other values by monotonicity. 

	Let $n\geq 1$ and $p \in [p_0, p_1]$.
	Fix a finite subgraph $G$ of $\Lat$ containing $[0,2n] \times [0,n]$ and let $e = (u,v)$ be an edge of $G$. 
	Consider the coupling $\Phi$ of $\phi_{p,q,G}^0(. \cond \om(e) = 0)$ and $\phi_{p,q,G}^0(. \cond \om(e) = 1 )$ 
	given by Proposition~\ref{prop:ggcoupling}.
	Then 
	$$ \phi_{p,q,G}^0\big(\calC_h(2n,n) \cond \om(e) = 1 \big) - \phi_{p,q,G}^0\big( \calC_h(2n,n) \cond \om(e) = 0\big)
	= \Phi \big( \om \in \calC_h(2n,n); \, \pi \notin \calC_h(2n,n)\big).
	$$
	For the event in the right-hand side of the above to occur,
	$C_e(\om)$ must contain a horizontal crossing of $[0,2n] \times [0,n]$. 
	For any choice of $e$, this implies that $C_e(\om)$ has a radius of at least $n$ around $u$. 
	In particular
	\begin{align*}
	 \phi_{p,q,G}^0 \big(\calC_h(2n,n) \cond \om(e) = 1\big) - \phi_{p,q,G}^0\big(\calC_h(2n,n) \cond \om(e) = 0\big)
	& \leq \Phi\big(u\xlra{\om, G} \Ball_n + u\big)+\Phi\big(v\xlra{\om, G} \Ball_n + u\big) \\
	& \leq  c' \phi_{p,q,G}^0 \big(u\lra \pd \Ball_n + u\big).
	\end{align*}
	For the second inequality we have used the finite-energy property of $\phi_{p,q,G}^0$.
	The inequality of Theorem~\ref{thm:ggsh1} may then be written for $p\in[p_0,p_1]$ as
	\begin{align*}
	\frac{d}{dp} \log \bigg[ \frac{\phi_{p,q,G}^0(\calC_h(2n,n))}{1-\phi_{p,q,G}^0(\calC_h(2n,n))} \bigg]
	&\geq c   \log \bigg[ \frac{1}{ \max_{u \in V_G} \phi_{p,q,G}^0(u \lra \pd \Ball_n + u)} \bigg],
	\end{align*}
	where $c >0$ depends on $p_0$ only. 
	Integrating the above between $p_0$ and $p_1$ and keeping in mind that the right-hand side is decreasing in $p$, 
	we obtain, after a short computation,
	\begin{align*}
	\phi_{p_0,q,G}^0(\calC_h(2n,n))\big(1-\phi_{p_1,q,G}^0(\calC_h(2n,n))\big)
	\leq \left( \max_{u\in V_G}\phi_{p_1,q,G}^0(u \lra \pd \Ball_n + u ) \right)^{c (p_1 - p_0)}
	\end{align*}	
	Now as $G$ tends to $\Lat$, both sides of the above converge and we obtain the desired result. 
\end{proof}

The following proposition is standard and will be proved at the end of this section.

\begin{proposition}\label{prop:main}
  Let $p \in (0,1)$ and $\phi_p$ be a random-cluster measure with edge-weight $p$.
  Suppose there exists $c=c(p)>0$ such that for any $u,v\in \calG^*$,
  \begin{align}\label{eq:dual_exp_decay}
    \phi_{p}(u\stackrel{*}{\longleftrightarrow}v)\le \exp(-c|u-v|).
  \end{align}
  Then $\phi_{p}(0 {\lra} \infty) > 0$ and $p \geq p_c$. 
\end{proposition}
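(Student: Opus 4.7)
The plan is to combine planar duality with a Peierls-style argument using the exponential decay hypothesis~\eqref{eq:dual_exp_decay}, then conclude via the finite-energy property and the doubly periodic structure of $\Lat$.

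First, I would recall the classical planar duality fact that, if $0 \not\lra \infty$ in $\om$, then in the dual configuration $\om^*$ there exists a dual circuit surrounding $0$. More generally, for any $R \ge 1$,
\[
\{\Ball_R \not\lra \infty\} \subseteq \{\exists \text{ a dual-open circuit surrounding } \Ball_R\}.
\]
(Under the planarity assumptions on $\Lat$ this is standard; see e.g.~\cite[Sec.~11.2]{Gri99a}.) Thus the main task is to show that the right-hand event has probability strictly less than $1$ for some large $R$.

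Next, I would carry out the Peierls-type estimate. Fix a half-line, say the positive $x$-axis, and enumerate the dual vertices on it as $v_0^*, v_1^*, v_2^*, \dots$ with $|v_k^*| \asymp k$. A dual circuit surrounding $\Ball_R$ must intersect each coordinate half-axis, and on the positive $x$-axis at some vertex $v_k^*$ with $k \ge R$; moreover, since the circuit goes all the way around, $v_k^*$ must be dual-connected to some vertex $w_j^*$ on the negative $x$-axis. A union bound combined with~\eqref{eq:dual_exp_decay} then yields
\begin{align*}
\phi_p\big(\exists \text{ dual circuit around } \Ball_R\big)
& \le 4 \sum_{k \ge R} \phi_p\big(v_k^* \stackrel{*}{\lra} \text{negative $x$-axis}\big) \\
& \le 4 \sum_{k \ge R} \sum_{j \ge 0} e^{-c\,|v_k^* - w_j^*|}
\le C\, e^{-c R},
\end{align*}
for some constant $C>0$. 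In particular, picking $R$ large enough,
$\phi_p(\Ball_R \lra \infty) \ge 1 - C e^{-cR} > 0.$

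Finally, I would transfer this positive probability from $\Ball_R$ to the single vertex $0$. Since $\Ball_R$ is finite, there exists $v \in V_\Lat \cap \Ball_R$ with $\phi_p(v \lra \infty) > 0$. The finite-energy property of the random-cluster model (consequence of the domain Markov property and $q\ge 1$) allows one to force open a fixed finite path of edges joining $0$ to $v$, at a positive conditional cost depending only on $p$, $q$ and $R$. Therefore $\phi_p(0 \lra \infty) > 0$, and from the definition of $p_c$ it follows immediately that $p \ge p_c$.

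The main technical point is the Peierls estimate in the second paragraph: the usual Peierls bound $\sum_k e^{-ck}$ is merely a finite constant and need not be less than $1$, so one must exploit the freedom to shift the ``centre'' to $\Ball_R$ and let $R \to \infty$, which makes the sum tail and the whole probability genuinely small. Apart from this, the argument is routine given the planar structure and the finite-energy property.
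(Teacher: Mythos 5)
Your proof is correct and follows essentially the same route as the paper's: a Peierls-type union bound showing that a dual circuit surrounding a large region forces a long dual connection, whose probability is controlled by the exponential decay hypothesis, followed by finite energy to pass from $\Ball_R$ to the origin. The only differences are bookkeeping (the paper indexes circuits by a vertex $v$ they pass through together with a roughly antipodal vertex $u$ on the circuit and concludes via Borel--Cantelli, whereas you index by crossings of the two horizontal half-axes and obtain a direct quantitative bound), plus the minor caveat that the circuit may cross the axis in the interior of a dual edge, so your enumeration should be of dual vertices within a bounded distance of the half-axis rather than exactly on it.
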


%

\begin{proof}[Theorem~\ref{thm:main}]
	Recall the definition of the following two quantities: 
	\begin{align*}
	  p_c & = \inf\{p\in(0,1):\phi_p^0(0\leftrightarrow\infty)>0\},\\
	  \tilde p_c & = 
	  \sup \Big\{ p \in(0,1) : \lim_{n\rightarrow \infty}-\tfrac1n\log[\phi_p^0(0\longleftrightarrow\partial\Lambda_n)]>0\Big\}.
	\end{align*}
	The claim of the theorem is that $p_c=\tilde p_c$. 
	Obviously, $p_c\ge \tilde p_c$ and we only need to prove the reverse inequality.

	We proceed by contradiction and assume $p_c > \tilde p_c$.
	Then there exist $\tilde p_c < p_0 < p_1 < p_2 < p_c$.
	Corollary~\ref{cor:RSW} implies that $\phi_{p_0}(\calC_h(2n,n))$ is bounded away from $0$, uniformly in $n$. 
	But since $p_1 < p_c$, $\phi_{p_1}(0 \lra \pd \Ball_n) \to  0$ as $n \to \infty$, 
	and Corollary~\ref{cor:gg_applied} yields
	$$ \phi_{p_1}\big(\calC_h(2n,n) \big) \xrightarrow[n \to \infty]{} 1.$$
	In the dual model that translates to  $\phi_{p_1}(\om^* \in \calC_v(2n,n)) \to 0$.
	By Proposition~\ref{prop:easy} applied to the dual random-cluster measure, there exists $c > 0$ such that 
	$\phi_{p_2}(u\stackrel{*}{\longleftrightarrow}v)\le \exp(-c|u-v|)$ for all $u,v \in {\Lat^*}$.
	By Proposition~\ref{prop:main} this contradicts $p_2 < p_c$.
\end{proof}

\begin{proof}[Proposition~\ref{prop:main}]
Let $p$, $\phi_p$ and $c>0$ be as in the proposition. 
For $v\in \calG^*$, let $A(v)$ be the event that there exists a dual-open circuit on $\calG^*$ 
(i.e. a path of dual-open edges of $\calG^*$ starting and ending at the same vertex of $\calG^*$) 
passing through $v$ and surrounding the origin. 
Such a circuit has radius at least $|v|$ when regarded as part of the dual cluster of $v$.
Thus, if $A(v)$ occurs, there exists a vertex $u$ such that 
$u\stackrel{*}{\longleftrightarrow}v$ and $|v|\le |u-v|\le |v|+1$. 
Since $\calG^*$ is locally-finite and doubly-periodic, there exists a constant $C=C(\calG^*)<\infty$ 
not depending on $v$ such that the number of possible vertices $u$ is bounded by $C|v|$. 
A trivial union bound and~\eqref{eq:dual_exp_decay} imply that
$$\phi_{p}(A(v))\le C|v|\exp(-c|v|).$$
The Borel-Cantelli lemma implies that there are almost surely only finitely many $v$ such that $A(v)$ holds,
and therefore finitely many dual-open circuits in $\calG^*$ surrounding the origin. 
This implies that $\phi_{p}(0\longleftrightarrow\infty)>0$ and therefore $p\ge p_c$. 
\end{proof}

\section{Discussion of a possible extension}\label{sec:extensions}\label{sec:conditions}

  The arguments we use in the proof of Theorem~\ref{thm:main} 
  are based on certain specific properties of the model. 
  In addition to the symmetries mentioned explicitly in Theorem~\ref{thm:main}, these are:
  \begin{enumerate}[noitemsep,nolistsep]
  \item positive association (i.e. the FKG inequality), the comparison between boundary conditions and the stochastic ordering;
  \item the domain Markov property~\eqref{domain Markov};
  \item the differential inequalities of Theorems~\ref{thm:ggsh1} and~\ref{thm:hamming}.
  \end{enumerate} 
  One may hope to adapt the result and its proof to other models with these, or similar, properties. 
  We discuss these three conditions next.  
  For illustration consider a family of measures $\mu_p$ on configurations on edges, 
  indexed by some parameter $p \in [0,1]$ called the edge-weight 
  (alternatively they could be parametrized by an inverse temperature $\beta \geq 0$, 
  as in Theorem \ref{thm:main_inhom}). 

  The first condition is classical and also paramount for our approach, 
  we could not hope to proceed without it. 
  
  The second, also fairly classical, is necessary to prove 
  the existence of infinite-volume random-cluster measures, and hence of a critical point. 
  But in this paper it is essentially only used in the proofs
  of Lemma~\ref{lemma:exp_decay} and Proposition~\ref{prop:ggcoupling}.
  One may hope to modify these arguments so as to replace the domain Markov property 
  by alternative properties. We do not have clear candidates. 

  The last condition is more particular and may seem specific to the random-cluster model.  
  Nevertheless, both Theorems~\ref{thm:ggsh1} and~\ref{thm:hamming} follow from rather general arguments. 
  A main ingredient for such inequalities is the existence of a ``Russo-type'' formula of the form
  $$\frac{d}{dp}\mu_p(A)\asymp \mu_p(\mathbf 1_A\eta)-\mu_p(A)\mu_p(\eta)$$
  for any increasing event $A$, where $\eta$ is the number of open edges 
  and $\asymp$ means that the ratio of the two quantities is bounded away from 0 and 1 uniformly in $A$.
  As observed in \cite{Gri06}, measures of the form
  $$\mu_p(\omega)=\frac{1}{Z_p}p^{o(\omega)}(1-p)^{c(\omega)}\mu(\omega),$$
  with $\mu$ a strictly positive measure satisfying the FKG inequality do satisfy the first and third conditions.

\paragraph*{Acknowledgments.} 
The authors are grateful to G. Grimmett for numerous helpful comments on an earlier version of this paper. 
We especially thank him for a suggestion that allowed to simplify substantially 
the third step of the proof of the main theorem.

Both authors were supported by the ERC AG CONFRA, the NCCR SwissMap, as well as by the Swiss {FNS}.
\bibliographystyle{siam} 
\bibliography{biblicomplete}

\begin{thebibliography}{10}

\bibitem{AizBar87}
{\sc M.~Aizenman and D.~J. Barsky}, {\em Sharpness of the phase transition in
  percolation models}, Comm. Math. Phys., 108 (1987), pp.~489--526.

\bibitem{AizBarFer87}
{\sc M.~Aizenman, D.~J. Barsky, and R.~Fern{{\'a}}ndez}, {\em The phase
  transition in a general class of {I}sing-type models is sharp}, J. Statist.
  Phys., 47 (1987), pp.~343--374.

\bibitem{BefDum12}
{\sc V.~Beffara and H.~Duminil-Copin}, {\em The self-dual point of the
  two-dimensional random-cluster model is critical for {$q\geq 1$}}, Probab.
  Theory Related Fields, 153 (2012), pp.~511--542.

\bibitem{BolRio06}
{\sc B.~Bollob{{\'a}}s and O.~Riordan}, {\em The critical probability for
  random {V}oronoi percolation in the plane is 1/2}, Probab. Theory Related
  Fields, 136 (2006), pp.~417--468.

\bibitem{CheDumHon12}
{\sc D.~Chelkak, H.~Duminil-Copin, and C.~Hongler}, {\em Crossing probabilities
  in topological rectangles for the critical planar {FK}-{I}sing model}.
\newblock arXiv:1312.7785, 2013.

\bibitem{CheDumHon12a}
{\sc D.~Chelkak, H.~Duminil-Copin, C.~Hongler, A.~Kemppainen, and S.~Smirnov},
  {\em Convergence of {I}sing interfaces to {S}chramm's {SLE} curves}, Comptes
  Rendus Mathematique, 352 (2014), pp.~157--151.

\bibitem{Dum13}
{\sc H.~Duminil-Copin}, {\em Parafermionic observables and their applications
  to planar statistical physics models}, vol.~25 of Ensaios Matematicos,
  Brazilian Mathematical Society, 2013.

\bibitem{DumHonNol11}
{\sc H.~Duminil-Copin, C.~Hongler, and P.~Nolin}, {\em Connection probabilities
  and {RSW}-type bounds for the two-dimensional {FK} {I}sing model}, Comm. Pure
  Appl. Math., 64 (2011), pp.~1165--1198.

\bibitem{DumMan13}
{\sc H.~Duminil-Copin, J.~Li, and I.~Manolescu}, {\em Random-cluster model with
  critical weights on isoradial graphs}.
\newblock preprint, 2014.

\bibitem{DCST14}
{\sc H.~Duminil-Copin, V.~Sidoravicius, and V.~Tassion}, {\em Absence of
  infinite cluster for critical bernoulli percolation on slabs}, Preprint,
  (2014).
\newblock 17 pages.

\bibitem{DumSidTas13}
\leavevmode\vrule height 2pt depth -1.6pt width 23pt, {\em Continuity of the
  phase transition for planar {P}otts models with $1\le q\le 4$}, Preprint,
  (2014).
\newblock 50 pages.

\bibitem{GraGri11}
{\sc B.~Graham and G.~Grimmett}, {\em Sharp thresholds for the random-cluster
  and {I}sing models}, Ann. Appl. Probab., 21 (2011), pp.~240--265.

\bibitem{Gri99a}
{\sc G.~Grimmett}, {\em Percolation}, vol.~321 of Grundlehren der
  Mathematischen Wissenschaften [Fundamental Principles of Mathematical
  Sciences], Springer-Verlag, Berlin, second~ed., 1999.

\bibitem{Gri06}
\leavevmode\vrule height 2pt depth -1.6pt width 23pt, {\em The random-cluster
  model}, vol.~333 of Grundlehren der Mathematischen Wissenschaften
  [Fundamental Principles of Mathematical Sciences], Springer-Verlag, Berlin,
  2006.

\bibitem{GriPiz97}
{\sc G.~R. Grimmett and M.~Piza}, {\em Decay of correlations in random-cluster
  models}, Communications in mathematical physics, 189 (1997), pp.~465--480.

\bibitem{Kes82}
{\sc H.~Kesten}, {\em Percolation theory for mathematicians}, vol.~2 of
  Progress in Probability and Statistics, Birkh{\"a}user Boston, Mass., 1982.

\bibitem{Men86}
{\sc M.~V. Menshikov}, {\em Coincidence of critical points in percolation
  problems}, Dokl. Akad. Nauk SSSR, 288 (1986), pp.~1308--1311.

\bibitem{Rus78}
{\sc L.~Russo}, {\em A note on percolation}, Z. Wahrscheinlichkeitstheorie und
  Verw. Gebiete, 43 (1978), pp.~39--48.

\bibitem{SeyWel78}
{\sc P.~D. Seymour and D.~J.~A. Welsh}, {\em Percolation probabilities on the
  square lattice}, Ann. Discrete Math., 3 (1978), pp.~227--245.
\newblock Advances in graph theory (Cambridge Combinatorial Conf., Trinity
  College, Cambridge, 1977).

\bibitem{Smi10}
{\sc S.~Smirnov}, {\em Conformal invariance in random cluster models. {I}.
  {H}olomorphic fermions in the {I}sing model}, Ann. of Math. (2), 172 (2010),
  pp.~1435--1467.

\end{thebibliography}

\footnotesize\obeylines
  \textsc{Universit\'e de Gen\`eve}
  \textsc{Gen\`eve, Switzerland}
  \textsc{E-mail:} \texttt{hugo.duminil@unige.ch; ioan.manolescu@unige.ch}

\end{document}